\documentclass[12pt, reqno]{amsart}
\usepackage[cp1251]{inputenc}
\usepackage{amsmath,amsopn,amssymb,amsthm, wasysym}
\usepackage[breaklinks=true,colorlinks=true,linkcolor=blue,citecolor=blue,urlcolor=blue]{hyperref}
\usepackage[english]{babel}
\usepackage{graphicx}

\renewenvironment{proof}[1][Proof]{\textbf{#1.} }{\ \rule{0.5em}{0.5em}}

\textwidth 155mm
\textheight 226mm

\voffset -1.1cm
\hoffset -1.3cm

\DeclareMathOperator{\Lin}{Lin}

\DeclareMathOperator{\Isom}{Isom}

\DeclareMathOperator{\conv}{conv}

\renewenvironment{proof}[1][Proof]{\textbf{#1.} }
{\ \rule{0.5em}{0.5em}}
\newtheorem{theorem}{Theorem}
\newtheorem{prop}{Proposition}
\newtheorem{lemma}{Lemma}
\newtheorem{corollary}{Corollary}
\newtheorem{question}{Question}
\newtheorem{conjecture}{Conjecture}

\theoremstyle{definition}
\newtheorem{definition}{Definition}
\newtheorem{remark}{Remark}
\newtheorem{example}{Example}

\newtheorem{problem}{Problem}

\begin{document}

\title
[Perfect and almost perfect homogeneous polytopes]
{Perfect and almost perfect homogeneous polytopes}
\author{V.N.~Berestovski\u\i, Yu.G.~Nikonorov}

\address{Berestovski\u\i\  Valeri\u\i\  Nikolaevich \newline
Sobolev Institute of Mathematics of the SB RAS, \newline
4 Acad. Koptyug Ave., Novosibirsk 630090, RUSSIA, \newline
Principal scientific researcher}
\email{valeraberestovskii@gmail.com}

\address{Nikonorov\ Yuri\u\i\  Gennadievich\newline
Southern Mathematical Institute of VSC RAS \newline
53 Vatutina St., Vladikavkaz, 362025, RUSSIA, \newline
Principal scientific researcher}
\email{nikonorov2006@mail.ru}

\begin{abstract}
The paper is devoted to perfect and almost perfect homogeneous polytopes
in Euclidean spaces. We classified perfect and almost perfect polytopes among all regular polytopes
and all semiregular polytopes excepting Archimedean solids and two four-dimensional Gosset polytopes.
Also we construct some non-regular homogeneous polytopes that are (or are not) perfect and
posed some unsolved questions.

\vspace{2mm}
\noindent
2020 Mathematical Subject Classification:
52A20, 52A40, 53A04.

\vspace{2mm} \noindent Key words and phrases: almost perfect polytope, convex polytope, homogeneous polytope, lattice, linear group representation,
L\"{o}wner~---~John ellipsoid, perfect polytope.
\end{abstract}

\maketitle

\hfill {\sl Dedicated to Professor Anatoly Georgievich Kusraev}

\hfill {\sl on the occasion of his 70th birthday}

\section{Introduction}\label{sect.0}

A finite metric space $(M,d)$ is homogeneous if its isometry group $\Isom(M)$ of $(M,d)$ acts transitively on $M$.
A finite homogeneous metric subspace of an Euclidean space
represents the vertex set of a compact convex polytope
with the isometry group that is transitive on the vertex set;
in each case, all vertices lie on a sphere.
In \cite{BerNik19, BerNik21, BerNik21n}, the authors  obtained the complete description of the metric properties
of the vertex sets of regular and semiregular polytopes
in Euclidean spaces from the point of view of the normal homogeneity and the Clifford~---~Wolf homogeneity, see also the survey~\cite{BerNik21nn} for additional results
on polytopes with normal homogeneous or Clifford~---~Wolf homogeneous vertex sets and \cite{BerNik20}
for the corresponding results on normal homogeneous and Clifford~---~Wolf homogeneous Riemannian manifolds.
Some properties of $m$-point homogeneous finite subspaces of Euclidean spaces were discussed in \cite{BerNik22}.

We consider Euclidean space $\mathbb {R}^n$ (with the standard inner product $(x,y)=\sum_{i=1}^n x_i y_i$) with the origin $O$
and the unit hypersphere $S:=S(O,1) \subset \mathbb{R}^n$.
In what follows, we will use the corresponding Euclidean norm $\|\cdot \|=\sqrt{(\cdot, \cdot)}$.

Recall that a polytope~$P$ in~$\mathbb{R}^n$
is {\it homogeneous\/} (or {\it vertex-transitive}) if
its symmetry (isometry) group acts transitively on the set of its vertices.
Hence, a convex polytope $P \in \mathbb{R}^n$ is homogeneous if and only if its vertex set with the metric $d$, induced by the Euclidean metric in  $\mathbb{R}^n$,
is homogeneous.

Since the barycenter of a finite system of material points (with one and the same mass) in any Euclidean space is preserved for any bijection
(in particular, any isometry)
of this system, we have the following result.

\begin{prop}[\cite{BerNik19}] \label{pr.efhs}
Let $M=\{x_1, \dots, x_q\}$, $q\geq n+1$, be a finite homogeneous metric subspace of Euclidean
space $\mathbb{R}^n$, $n\geq 2$. Then
$M$ is the vertex set of  a convex polytope~$P$, that is situated in some sphere in $\mathbb{R}^n$ with radius
$r>0$ and center $x_0=\frac{1}{q}\cdot\sum_{k=1}^{q}x_k$.
In particular, $\Isom(M,d)\subset O(n)$.
\end{prop}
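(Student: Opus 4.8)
The plan is to verify the single fact that every self-isometry of the finite metric space $(M,d)$ is the restriction of an isometry of $\mathbb{R}^n$ that fixes the barycenter $x_0:=\frac1q\sum_{k=1}^q x_k$, and then to read off all three assertions from it.

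First I would establish the extension. Fix $x_1\in M$ and put $v_k:=x_k-x_1$ for $k=1,\dots,q$; we may assume these vectors span $\mathbb{R}^n$ (otherwise replace $\mathbb{R}^n$ by the affine hull of $M$, which is legitimate since $q\ge n+1$ makes the full-dimensional case the relevant one). By the polarization identity $(v_k,v_l)=\tfrac12\bigl(d(x_1,x_k)^2+d(x_1,x_l)^2-d(x_k,x_l)^2\bigr)$, all these inner products are determined by the metric $d$. Hence, for any $f\in\Isom(M,d)$, the vectors $w_k:=f(x_k)-f(x_1)$ have the same Gram matrix as the $v_k$; since both families span $\mathbb{R}^n$, the assignment $v_k\mapsto w_k$ extends to a unique $A_f\in O(n)$ (well-definedness: $\sum c_kv_k=0$ gives $\|\sum c_kw_k\|^2=\sum_{i,j}c_ic_j(v_i,v_j)=0$), and then $F_f\colon x\mapsto A_f(x-x_1)+f(x_1)$ is an isometry of $\mathbb{R}^n$ restricting to $f$ on $M$.

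Next, since $F_f$ is affine and merely permutes the points $x_1,\dots,x_q$, we get $F_f(x_0)=\frac1q\sum_k F_f(x_k)=\frac1q\sum_k f(x_k)=x_0$, so every $F_f$ fixes $x_0$. Writing $F_f$ in the form $x\mapsto x_0+A_f(x-x_0)$ with $A_f\in O(n)$ exhibits $\Isom(M,d)$ as a subgroup of $O(n)$ (taking $x_0$ as the origin), which is the last claim. Setting $r:=\|x_1-x_0\|$, homogeneity provides for each $k$ some $f$ with $f(x_1)=x_k$, whence $\|x_k-x_0\|=\|A_f(x_1-x_0)\|=r$; thus $M\subset S(x_0,r)$, and $r>0$ because otherwise all $x_k$ would equal $x_0$, contradicting $q\ge 2$. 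Finally, $P:=\conv(M)$ is a convex polytope contained in the closed ball bounded by $S(x_0,r)$, and every point on that sphere is an extreme point of any convex subset of the ball; hence each $x_k$ is a vertex of $P$, so $M$ is precisely the vertex set of $P$.

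I expect the only genuine work to be the extension step of the second paragraph — manufacturing an actual element of $O(n)$ from the coincidence of Gram matrices of two spanning configurations. Everything after that is bookkeeping. One may also sidestep the extension for the sphere statement by invoking the identity $\|x_i-x_0\|^2=\frac1q\sum_k d(x_i,x_k)^2-\frac{1}{2q^2}\sum_{k,l}d(x_k,x_l)^2$, whose right-hand side is independent of $i$ once homogeneity is used to match the multisets $\{d(x_i,x_k)\}_k$; but the extension is still required to obtain $\Isom(M,d)\subset O(n)$.
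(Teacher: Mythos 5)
Correct, and essentially the route the paper itself indicates (the paper only sketches the argument and cites \cite{BerNik19}): every isometry of $M$ extends to an affine isometry of $\mathbb{R}^n$, that extension fixes the barycenter $x_0$ because an affine map commutes with averaging and $f$ permutes the $x_k$, and transitivity then places all points on a sphere about $x_0$; your Gram-matrix/polarization construction simply makes the extension step explicit, and the extreme-point remark correctly identifies $M$ with the vertex set of $\conv(M)$. One small caution: $q\ge n+1$ does not force $M$ to be full-dimensional (e.g.\ a square in $\mathbb{R}^3$ is a degenerate homogeneous set with $q=n+1$), so in the degenerate case you should not just pass to the affine hull but also extend each $A_f$ by the identity on the orthogonal complement of the linear span of $M-x_0$ in order to realize $\Isom(M,d)$ inside $O(n)$; with that harmless adjustment the proof is complete.
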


This result shows that {\it the theory of convex polytopes} is very important for the study of finite homogeneous subspaces of Euclidean spaces,
see \cite{BoFe1987, Cox73, FToth64}.

Let us recall the definition of {\it almost perfect polytope},
that appeared for the first time in \cite{BerNik21n}.

\begin{definition}\label{de.alper}
We will call a nondegenerate homogeneous polytope $P \subset \mathbb{R}^n$ {\it almost perfect},
if every second order hypersurface in $\mathbb {R}^n$, containing all vertices of $P$ and symmetric with respect to the center of $P$,
coincides with the circumscribed hypersphere of $P$.
\end{definition}

\begin{definition}\label{de.per}
We will call a nondegenerate homogeneous polytope $P \subset \mathbb{R}^n$ {\it perfect},
if every second order hypersurface in $\mathbb {R}^n$, containing all vertices of $P$,
coincides with the circumscribed hypersphere of $P$.
\end{definition}

It is clear that every perfect homogeneous polytope is almost perfect.

\smallskip

\begin{lemma}[\cite{BerNik21n}]\label{le.perf_0}
A homogeneous polytope $P \subset \mathbb{R}^n$ is {\rm(}almost{\rm)} perfect if and only if
any ellipsoid $E$ in $\mathbb {R}^n$ {\rm(}with center coinciding with the center of $P${\rm)}, which includes all vertices of $P$, coincides with the circumscribed hypersphere of $P$.
\end{lemma}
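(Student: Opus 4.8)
The plan is a short perturbation argument. One implication is for free: an ellipsoid is a particular second order hypersurface, and an ellipsoid with centre at the centre of $P$ is a particular second order hypersurface symmetric with respect to the centre of $P$; hence if $P$ is perfect (resp. almost perfect) then the corresponding statement about ellipsoids holds at once. For the converse I would first use Proposition~\ref{pr.efhs} to replace $P$ by a translate whose centre (the barycenter of the vertex set) is the origin $O$, so that the circumscribed hypersphere is $S=S(O,r)$ with defining polynomial $s(x):=\|x\|^2-r^2$, every vertex $v$ satisfying $\|v\|=r$.

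Assume now the ellipsoid version holds, and let $Q=\{x:q(x)=0\}$ be any second order hypersurface through all vertices of $P$, with $q(x)=(Ax,x)+(b,x)+c$, $A$ symmetric (and $b=0$ in the almost perfect case, since symmetry of $Q$ with respect to $O$ means $q(x)=q(-x)$). I would look at the one-parameter family $q_\varepsilon:=s+\varepsilon q$ for small $\varepsilon>0$: its quadratic part is $I+\varepsilon A$, which is positive definite once $\varepsilon$ is small enough (the eigenvalues of $A$ are bounded), and, completing the square, its constant term is $\varepsilon c-r^2-\tfrac{\varepsilon^2}{4}\bigl((I+\varepsilon A)^{-1}b,b\bigr)<0$ for small $\varepsilon$. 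Therefore $E_\varepsilon:=\{q_\varepsilon=0\}$ is a genuine bounded ellipsoid with centre $-\tfrac{\varepsilon}{2}(I+\varepsilon A)^{-1}b$, and it passes through every vertex of $P$ because both $s$ and $q$ vanish there. By hypothesis $E_\varepsilon=S$. In the perfect case this already forces the centre of $E_\varepsilon$ to be $O$, i.e. $b=0$; in the almost perfect case $b=0$ from the outset. Since two degree-two polynomials defining one and the same nonempty ellipsoid are proportional, $q_\varepsilon=\mu s$ for some $\mu\neq 0$; matching the quadratic parts gives $I+\varepsilon A=\mu I$, hence $A=\alpha I$ with $\alpha=(\mu-1)/\varepsilon$, and matching constants gives $c=-\alpha r^2$, so $q=\alpha\, s$ with $\alpha\neq 0$, i.e. $Q=S$.

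I expect the only points needing care to be: (i) verifying that $q_\varepsilon=0$ is really an ellipsoid for small $\varepsilon$ — positivity of $I+\varepsilon A$ is clear, but the sign of the constant term must be tracked through the completion of the square; and (ii) the elementary but essential fact that a nonempty ellipsoid determines its quadratic equation up to a nonzero scalar, which is what turns the equality $E_\varepsilon=S$ into the proportionality $q_\varepsilon=\mu s$. A secondary subtlety, present only in the non-centred ``perfect'' case, is the observation that $E_\varepsilon=S$ automatically pins the centre of $E_\varepsilon$ at the barycenter of $P$ and thereby kills the linear part of $q$, which is precisely why no symmetry assumption on $Q$ is needed there.
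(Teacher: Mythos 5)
Your proposal is correct and follows essentially the same route as the paper: the paper also perturbs the sphere equation to $\|x\|^2-r^2+\varepsilon F(x)=0$, notes that for small $|\varepsilon|$ this is a bounded quadric (hence an ellipsoid) through all vertices, and that it inherits the center $O$ when $F$ is centrally symmetric. You merely make explicit two details the paper leaves implicit, namely the completion of the square showing the perturbed quadric is a genuine nonempty ellipsoid and the fact that a quadratic polynomial vanishing on the circumscribed sphere is proportional to $\|x\|^2-r^2$, which turns $E_\varepsilon=S$ into $Q=S$.
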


It could be explained as follows. Without loss of generality, we may suppose that the origin $O$ is the center of a homogeneous polytope $P$. Then all vertices of $P$ satisfy
the condition $\|x\|^2-r^2=0$, where $r$ is the radius of the curcumscribed hypersphere for $P$. If we have a quadric $F(x)=0$ containing all vertices of $P$ and distinct
from the above hypersphere, then all vertices of $P$ are situated on any quadric of the form $\|x\|^2-r^2 +\varepsilon \cdot F(x)=0$, where $\varepsilon \in \mathbb{R}$.
It is clear that this quadric is bounded (hence, is an ellipsoid) for sufficiently small $|\varepsilon|>0$. Moreover, if the origin $O$ is the center for  quadric $F(x)=0$,
than it is the center for all quadrics from the above one-parameter family.

\begin{lemma}\label{le.sym}
Let $P$ be a non-degenerate homogeneous {\rm(}bounded{\rm)} polytope in $\mathbb{R}^n$, $n\geq 2$, which is centrally symmetric with respect to
the origin $O\in\mathbb{R}^n$. Then every second order hypersurface in $\mathbb{R}^n$, containing all vertices in $P$, is symmetric
with respect to the center $O$.
\end{lemma}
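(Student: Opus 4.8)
The plan is to argue directly with the polynomial defining the quadric and to use the central symmetry of the vertex set to annihilate its linear part. Represent an arbitrary second order hypersurface in $\mathbb{R}^n$ as the zero set of
\[
F(x)=(Ax,x)+(b,x)+c,
\]
where $A$ is a real symmetric $n\times n$ matrix, $b\in\mathbb{R}^n$, $c\in\mathbb{R}$, and $F\not\equiv 0$. Let $v_1,\dots,v_q$ be the vertices of $P$. Since $P$ is centrally symmetric about $O$, the point $-v_k$ is also a vertex for each $k$, so the hypothesis that the hypersurface passes through all vertices of $P$ gives $F(v_k)=0$ and $F(-v_k)=0$ for every $k$.

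The computational core is then a single line: $F(v_k)-F(-v_k)=2(b,v_k)$, so $(b,v_k)=0$ for all $k$. To conclude $b=0$ I would invoke non-degeneracy of $P$: being full-dimensional, $P$ has affine hull equal to $\mathbb{R}^n$, and the affine hull of the vertices is contained in their linear span, so $v_1,\dots,v_q$ span $\mathbb{R}^n$ as a vector space; hence $b=0$. Therefore $F(x)=(Ax,x)+c$ satisfies $F(-x)=F(x)$, so the set $\{x\in\mathbb{R}^n:F(x)=0\}$ is invariant under $x\mapsto -x$, which is precisely central symmetry with respect to $O$.

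There is no serious obstacle here; the only thing to notice is which hypotheses do the work. Central symmetry of $P$ is used solely to produce, for each vertex $v_k$, its mate $-v_k$, and this pairing is what kills the linear coefficient $b$ of $F$. Non-degeneracy is used solely to pass from $(b,v_k)=0$ for all $k$ to $b=0$; without it the vertices would lie in a proper affine subspace and the quadric could still be non-symmetric in the complementary directions. Once both ingredients are in hand the conclusion is immediate, and in fact the argument uses only that the vertex set of $P$ is finite, affinely spanning, and centrally symmetric about $O$ (homogeneity of $P$ plays no role).
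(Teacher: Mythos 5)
Your proposal is correct and follows essentially the same route as the paper: pair each vertex $v$ with $-v$, use $F(v)-F(-v)=2(b,v)=0$, deduce $b=0$ from the fact that the vertices of a non-degenerate polytope span $\mathbb{R}^n$, and conclude that the quadric is even and hence symmetric about $O$. The only difference is notational (matrix form $(Ax,x)+(b,x)+c$ versus the paper's coordinate expression), so nothing further is needed.
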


\begin{proof}
Assume that a second order hypersurface in $\mathbb{R}^n,$ containing all vertices in $P,$ is defined by an equation
\begin{equation}\label{eq_cubepol1}
F(x)=\sum_{i,j=1}^n a_{ij} x_ix_j+\sum_{i=1}^n b_i x_i+ c=0,
\end{equation}
where $a_{ij},b_i,c \in \mathbb{R}$, and $a_{ij}=a_{ji}$ for all indices.

Let $v=(v_1,\dots,v_n)$ be any vertex of $P$. Then $-v$ is also a vertex of $P$ and
$0=F(v)-F(-v)= 2\Sigma_{i=1}^nb_iv_i$, so the vector $(b_1,\dots, b_n)$ is orthogonal to the
vector $v$. Since every vector in $\mathbb{R}^n$ can be represented as a linear combination of some vertices $v$ of $P$,
then $b_1=\dots=b_n=0$. Hence the surface defined by the equation (\ref{eq_cubepol1}) is symmetric with respect to the origin $O$.
\end{proof}

\begin{corollary}\label{cor.csym}
If a homogeneous polytope $P$ in $\mathbb{R}^n$, $n\geq 2$, is almost perfect and centrally symmetric with respect to its center, then
$P$ is perfect.
\end{corollary}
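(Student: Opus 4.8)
The plan is to combine Definition~\ref{de.alper}, Definition~\ref{de.per} and Lemma~\ref{le.sym} in the obvious way. Suppose $P\subset\mathbb{R}^n$ is almost perfect and centrally symmetric with respect to its center, which we may assume to be the origin $O$. To show that $P$ is perfect, by Definition~\ref{de.per} I must check that an arbitrary second order hypersurface $F(x)=0$ containing all vertices of $P$ coincides with the circumscribed hypersphere of $P$.

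The key step is to remove the hypothesis of central symmetry of the quadric that distinguishes the two definitions. Let $F(x)=0$ be any second order hypersurface in $\mathbb{R}^n$ containing all the vertices of $P$. Since $P$ is non-degenerate (its vertices span $\mathbb{R}^n$ affinely, hence, being centrally symmetric about $O$, linearly) and centrally symmetric with respect to $O$, Lemma~\ref{le.sym} applies and tells us that this hypersurface is automatically symmetric with respect to $O$. In other words, the extra requirement in Definition~\ref{de.alper} — that the hypersurface be symmetric with respect to the center of $P$ — is satisfied for free.

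Therefore every second order hypersurface containing all vertices of $P$ is of the kind considered in Definition~\ref{de.alper}, and since $P$ is almost perfect, each such hypersurface coincides with the circumscribed hypersphere of $P$. This is precisely the statement that $P$ is perfect. I do not anticipate any real obstacle here: the only point that needs a moment's care is to confirm that the centrally symmetric non-degenerate polytope $P$ indeed has its vertices spanning $\mathbb{R}^n$ as a linear (not merely affine) space, so that the argument in the proof of Lemma~\ref{le.sym} — writing every vector of $\mathbb{R}^n$ as a linear combination of vertices — goes through; this is immediate because the affine span is all of $\mathbb{R}^n$ and the origin, being the center, is among the affine combinations, forcing the linear span to be all of $\mathbb{R}^n$ as well.
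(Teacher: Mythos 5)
Your proof is correct and is exactly the intended argument: the paper states this corollary without a separate proof precisely because, as you argue, Lemma~\ref{le.sym} shows every quadric through the vertices of a centrally symmetric polytope is automatically symmetric about the center, so almost perfectness immediately yields perfectness. Your added remark that the vertices span $\mathbb{R}^n$ linearly is a reasonable clarification but does not change the route.
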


Apart from the fact that the properties of homogeneous polytopes being perfect or almost perfect are of interest in their own right,
they provide some tools for studying other properties of homogeneous polytopes.
For instance, rather restrictive conditions are established for the structure of Clifford~---~Wolf translations on the vertex sets of almost perfect polytopes,
see Proposition~11 in \cite{BerNik21n}.
\smallskip

Important examples of homogeneous polytopes are regular and semiregular polytopes.

Any one-dimensional polytope is a closed segment,
bounded by two endpoints.
It is regular by definition.
Two-dimensional regular polytopes are regular
polygons on Euclidean plane.
For other dimensions, regular polytopes are defined inductively.
A convex $n$-dimensional polytope for $n \geq 3$ is called {\it regular},
if it is homogeneous
and all its facets are regular polytopes congruent to each other.
This definition is equivalent to other definitions of regular convex
polytopes.

We also recall the definition of semiregular convex polytopes.
For $n=1$ and $n=2$, semiregular polytopes are defined as regular.
A convex $n$-dimensional polytope for $n\geq 3$ is called
{\it semiregular} if it is homogeneous
and all its facets are regular polytopes.
A more detailed exposition of regular and semiregular polytopes in Euclidean spaces can be found in various sources, for example, in \cite{Cox73}.

The following problem is very natural.

\begin{problem}\label{prob.1}
Find all perfect and all almost perfect polytopes among regular and semiregular polytops.
\end{problem}

The main result of this paper is as follows.

\begin{theorem}\label{th.main.1}
Let $P$ be a regular polytope in Euclidean space $\mathbb{R}^n$, $n\geq 2$. Then the following assertions hold.

{\rm  1.} $P$ is perfect exactly in the following cases: a regular $m$-gon in $\mathbb{R}^2$ for $m\geq 5$; the dodecahedron and the icosahedron in $\mathbb{R}^3$;
the $24$-cell, the $120$-cell, and $600$-cell in $\mathbb{R}^4$.

{\rm  2.} $P$ is not perfect but is almost perfect if and only if it is a regular triangle in $\mathbb{R}^2$.

{\rm  3.} $P$ is not almost perfect exactly in the following cases: the $n$-dimensional cube and the $n$-dimensional orthoplex  for $n\geq 2$; the $n$-dimensional regular
simplex for $n\geq 3$.
\end{theorem}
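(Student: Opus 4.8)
The plan is to treat the three assertions as a single case-by-case analysis organized by the type of regular polytope, using Lemma~\ref{le.perf_0} to reduce everything to the following question: which ellipsoids (centered at the center of $P$, which we place at the origin $O$) contain the vertex set of $P$? Since the vertices all lie on the circumscribed sphere $\|x\|=r$, an ellipsoid $\sum_{i,j} a_{ij}x_ix_j=1$ contains the vertices if and only if the quadratic form $Q(x)=\sum a_{ij}x_ix_j$ takes the constant value $1/r^2$ on every vertex. Writing $Q(x)=(Ax,x)$ with $A$ symmetric, this says the symmetry group $G\subset O(n)$ of $P$ acts on the set of vertices so that the function $v\mapsto(Av,v)$ is $G$-invariant; by vertex-transitivity it is automatically constant. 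Thus $P$ is perfect iff the only symmetric matrices $A$ with $(Av,v)$ constant over all vertices $v$ are the scalar multiples of the identity, and $P$ is almost perfect iff, in addition restricting to $A$ satisfying the ellipsoid (positive-definiteness) and central-symmetry conditions, one still gets only multiples of $I$. By Corollary~\ref{cor.csym}, for centrally symmetric $P$ the notions of perfect and almost perfect coincide, which handles the cube, the orthoplex, the $24$-cell, the $120$-cell, the $600$-cell, the icosahedron, the dodecahedron, and the $m$-gons with $m$ even; the simplex and odd $m$-gons need the extra argument with the linear terms.

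Next I would reformulate the key computation representation-theoretically. The set of symmetric $n\times n$ matrices $A$ with $(Av,v)$ constant on vertices is an affine subspace; after subtracting a multiple of $I$ it becomes the linear space $\mathcal{N}$ of symmetric $A$ with $\trace(A)=0$ (say, normalizing by $\sum_v(Av,v)=0$, using $\sum_v v v^{\mathsf T}$ proportional to $I$ since $G$ acts irreducibly on $\mathbb{R}^n$ for every regular polytope) and $(Av,v)=0$ for all vertices $v$. Now $(Av,v)=0$ for all $v$ precisely says that the image of $\{vv^{\mathsf T}:v\text{ vertex}\}$ spans a subspace of $\mathrm{Sym}(\mathbb{R}^n)$ whose orthogonal complement (w.r.t.\ the trace form) is exactly the set of admissible $A$. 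Equivalently, $P$ is perfect iff $\{vv^{\mathsf T}\}$ spans all of $\mathrm{Sym}(\mathbb{R}^n)$, of dimension $\binom{n+1}{2}$. Because $G$ acts irreducibly on $\mathbb{R}^n$, the space $\mathrm{Sym}(\mathbb{R}^n)$ decomposes as $\mathbb{R}I\oplus W$ with $W$ the traceless symmetric matrices, and $W$ is $G$-irreducible exactly for the ``large'' symmetry groups (this is the classical fact that $H_3$, $F_4$, $H_4$ act irreducibly on traceless symmetric $2$-tensors, while the symmetric/hyperoctahedral groups do not). So the heart of the proof is: (i) for the dodecahedron/icosahedron, the $24$-cell, the $120$-cell, the $600$-cell, and $m$-gons with $m\ge5$, show $W$ carries no trivial $G$-subrepresentation, hence $\mathcal{N}=0$ and $P$ is perfect; (ii) for the simplex, cube, orthoplex, and the triangle, exhibit an explicit nonzero $A\in\mathcal{N}$, then decide perfect vs.\ almost perfect by checking positive-definiteness of $I+tA$ for small $t$ together with the central-symmetry obstruction from Lemma~\ref{le.sym}.

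For the concrete nonperfect cases I would just write down the forms: for the $n$-cube with vertices $(\pm1,\dots,\pm1)$, $(Av,v)=\sum a_{ii}+2\sum_{i<j}a_{ij}(\pm1)(\pm1)$ forces off-diagonal freedom; for the orthoplex with vertices $\pm e_i$, $(Av,v)=a_{ii}$ forces all diagonal entries equal but leaves off-diagonal entries free, and in both cases there is a diagonal (cube: pure off-diagonal) traceless $A$ making $I+tA$ positive definite for small $|t|$, so these are not almost perfect; the same works for the regular simplex using its standard coordinates (vertices $e_1,\dots,e_{n+1}$ in the hyperplane $\sum x_i=1$ of $\mathbb{R}^{n+1}$, or the centered model), where for $n\ge3$ one finds a nonscalar admissible $A$ and checks it yields an ellipsoid, while for $n=2$ the triangle's only admissible quadratic forms with center at $O$ are scalar (so it is almost perfect) but there exist noncentered conics through the three vertices other than the circle, e.g.\ a pair of lines, so it is not perfect — this last point is exactly where Corollary~\ref{cor.csym} cannot be invoked and one must use the full Definition~\ref{de.per}. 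The main obstacle is step (i): verifying that $F_4$ and $H_4$ (and $H_3$, $I_2(m)$) leave no nonzero traceless symmetric matrix invariant. For $I_2(m)$ with $m\ge5$ this is an elementary check that rotation by $2\pi/m$ has no nonscalar symmetric fixed matrix; for $H_3$ one can either use character theory of the icosahedral group or argue directly that the icosahedron's twelve vertex directions (six lines through vertices) cannot all lie on a proper quadric. For $F_4$, $H_4$ the cleanest route is to cite the decomposition of $\mathrm{Sym}^2$ of the reflection representation into irreducibles (available from the standard theory of Coxeter groups / from \cite{Cox73}), or, if a self-contained argument is wanted, to count: the $24$-cell, $120$-cell, $600$-cell each have enough vertices in ``generic enough'' position that the matrices $vv^{\mathsf T}$ span all $10$ dimensions of $\mathrm{Sym}(\mathbb{R}^4)$, which can be reduced to a finite rank computation on explicit coordinate lists. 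I expect the write-up to lean on the representation-theoretic dichotomy to keep the $4$-dimensional cases short.
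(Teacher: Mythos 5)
Your central step does not hold up as written. Early on you state the correct criterion: almost perfectness of $P$ (with center $O$) is equivalent to saying that the only symmetric matrices $A$ with $(Av,v)$ constant on the vertex set are the multiples of $I$, and this follows if the traceless part $W$ of the symmetric square is \emph{irreducible} under the symmetry group $G$, since the admissible $A$'s form a $G$-invariant subspace $\mathcal N\subset W$ that is never all of $W$. But in the operative step (i), and again in your closing summary, you replace irreducibility of $W$ by the much weaker condition that $W$ contain no \emph{trivial} $G$-subrepresentation (``$G$ leaves no nonzero traceless symmetric matrix invariant'') and claim this forces $\mathcal N=0$. That implication is false, and the polytopes of part 3 of the theorem are exactly the counterexamples: the symmetry groups of the cube, the orthoplex and the regular simplex are absolutely irreducible on $\mathbb R^n$, so for them too $W$ has no trivial summand, yet these polytopes are not almost perfect; e.g.\ for the cube with vertices $(\pm1,\dots,\pm1)$ one has $(Av,v)=\sum_i a_{ii}+2\sum_{i<j}a_{ij}v_iv_j$, so $\mathcal N$ is the $(n-1)$-dimensional space of traceless \emph{diagonal} matrices --- a nonzero invariant, but nontrivial, subspace of $W$. (This also shows your explicit witnesses are swapped: constancy on the cube forces the off-diagonal entries to vanish and leaves the diagonal free, so the cube needs a traceless diagonal $A$, while a pure off-diagonal $A$ works for the orthoplex; as written your ``pure off-diagonal'' form is not constant on the cube's vertices.) So for $H_3$, $F_4$, $H_4$ you must actually prove irreducibility of $W$ (a character computation, not something found in \cite{Cox73}) or carry out your fallback rank computation showing that the matrices $vv^{T}$ over the vertices span all of the symmetric matrices; without one of these, part 1 in dimensions 3 and 4 is not proved. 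The paper avoids this machinery entirely: perfectness of the dodecahedron and of the $24$-cell comes from an inscribed cube (Proposition \ref{pr.partop}, Corollary \ref{cor.partop}), the icosahedron from golden rectangles through the center, and the $120$- and $600$-cells from Proposition \ref{prop.partopn}, since their vertex sets contain the vertex set of a $24$-cell.

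A second gap concerns the regular $m$-gons with $m\ge5$ and $m$ odd. These are not centrally symmetric, so Corollary \ref{cor.csym} does not upgrade almost perfect to perfect, and your criterion (rotation by $2\pi/m$ fixes no nonscalar symmetric matrix) only controls centered conics; you acknowledge that the linear terms ``need the extra argument'' but never supply it. The paper's Lemma \ref{le.four} --- a circle meets any distinct conic in at most four points --- settles perfectness for all $m\ge5$ in one stroke, and you would need it (or the fact that five points in general position determine a unique conic) to finish this case. The remaining items of your plan (triangle almost perfect by the hexagon/central-symmetry argument but not perfect because of a degenerate or non-centered conic; simplex, cube, orthoplex not almost perfect via explicit nonscalar admissible forms) are in substance the same as the paper's Propositions \ref{pr.regpolyg1} and \ref{pr.regsimpl1} and Examples \ref{ex.partop}, \ref{ex.orthopl1}, and are fine once the slips above are corrected.
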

\smallskip

The proof of this theorem follows immediately from the well known classification of regular polytopes,
Examples \ref{ex.partop} and \ref{ex.orthopl1}, Propositions \ref{pr.regpolyg1} and \ref{pr.regsimpl1},  Theorems \ref{th.regpolyh1} and \ref{th.fourdim1} below.
Recall also that the $2$-dimensional cube, as well as the $2$-dimensional orthoplex, is the square.
\smallskip

As for semiregular polytopes, we have not yet  thoroughly studied Archimedean solids and two $4$-dimensional
Gosset polytopes and we don't know yet are they
perfect or almost perfect.
We hope that a complete classification of perfect and almost perfect semiregular
polytopes will lead
 to the discovery of new interesting effects and will make it possible to advance in solving related problems.
\smallskip

The paper is organized as follows. In Section \ref{sect.1}
we consider some natural examples of perfect and almost perfect homogeneous polytope, as well as examples of polytopes that have no such properties.
In particular, we give a comprehensive description of all regular polygons from this point of view in Proposition \ref{pr.regpolyg1}.

Section \ref{sect.2} is devoted to the study of various relationships between
the reducibility of the isometry group of polytopes, their L\"{o}wner~---~John ellidsoids and the (almost) perfectness.

In the last Section \ref{sect.3}, we provide more complicated
examples of perfect polytopes, as well as some partial classifications.
In particular, we obtain a description of all (almost) perfect $3$-dimensional and $4$-dimensional regular polytopes in Theorems \ref{th.regpolyh1} and \ref{th.fourdim1}.
and we prove that the Gosset semiregular polytopes in dimensions $6$, $7$, and $8$ are perfect.
\smallskip

To obtain the results, we used different facilities connecting to corresponding polytopes, including the natural linear representations in ambient
Euclidean spaces of their isometry subgroups, their L\"{o}wner~---~John ellidsoids, quadrics, cubes of the greatest volume inscribed into them,
distance spheres in their vertex sets with vertices as centers, and the lattice packing in $\mathbb{R}^8$ for the lattice $E_8$.
\medskip

{\bf Acknowledgments.} The work of the first author was carried
out within the framework of the state Contract to the IM SB RAS, project FWNF-2022-0006.

\section{Some natural examples}\label{sect.1}

In this section, we collect some examples of perfect and almost perfect polytopes in Euclidean spaces, as well as
the examples of polytopes that have not these properties.

\begin{lemma}\label{le.four}
The intersection of a given circle $S$ on the Euclidean plane with any curve~$Q$ of the second order, distinct from $S$,
contains at most $4$ points.
\end{lemma}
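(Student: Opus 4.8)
The plan is to reduce the problem to a statement about polynomials. A circle $S$ on the Euclidean plane is the zero set of a polynomial of the form $g(x,y)=x^2+y^2+dx+ey+f$, while an arbitrary second-order curve $Q$ is the zero set of a polynomial $h(x,y)=ax^2+bxy+cy^2+\dots$ of total degree at most $2$. A point lies in $S\cap Q$ precisely when it is a common zero of $g$ and $h$. Since $S\neq Q$ and $S$ is genuinely a circle (its leading form $x^2+y^2$ is nondegenerate), the polynomials $g$ and $h$ have no common polynomial factor of positive degree: the only way $h$ could be divisible by an irreducible factor of $g$ is if $h$ were a scalar multiple of $g$ (because $g$ is irreducible over $\mathbb{R}$, as $x^2+y^2+dx+ey+f$ has no real linear factor), and that is excluded by $Q\neq S$.

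First I would form the auxiliary polynomial $h(x,y)-a\cdot g(x,y)$, where $a$ is the coefficient of $x^2$ in $h$; this kills the $x^2$ term and leaves a polynomial that vanishes on every point of $S\cap Q$. Combining this with $g$ itself, one sees that $S\cap Q$ is contained in the intersection of the circle $g=0$ with a curve defined by a polynomial of degree at most $2$ that has no $x^2$ term. Parametrizing the circle — either rationally via the stereographic substitution, or by writing $y$ as a function of $x$ on each of the two arcs — and substituting into the second equation yields a single-variable polynomial whose degree is at most $4$; an alternative and cleaner route is simply to invoke Bézout's theorem for the two conics $S$ and $Q$, which (having no common component) meet in at most $2\cdot 2=4$ points, counted with multiplicity, hence in at most $4$ points set-theoretically.

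The one genuinely delicate point is justifying that the count cannot exceed $4$ without appealing to projective machinery if one wants an elementary argument: concretely, after subtracting a suitable multiple of $g$ from $h$ to eliminate $x^2$, and then a further multiple to eliminate $y^2$, one is left with an equation of the form $\beta xy+\gamma x+\delta y+\varepsilon=0$; if this is not identically satisfied on $S$ it defines either a line or a hyperbola, and in either case its intersection with the circle $S$ is cut out by substituting the conic relation — the resulting resultant in one variable has degree at most $4$, which bounds the number of solutions. I expect the main obstacle to be purely bookkeeping: tracking the finitely many degenerate sub-cases (when $Q$ is a pair of lines, a single line, or empty) to make sure the degree bound of $4$ is never violated and that the argument does not secretly divide by zero. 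Once those cases are checked, the conclusion $\card(S\cap Q)\leq 4$ follows.
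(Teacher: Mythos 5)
Your proposal is essentially correct, and its primary route is the paper's own argument: the paper normalizes $S$ to the unit circle, assumes $(-1,0)\notin Q$, substitutes the stereographic parametrization $\bigl(\tfrac{1-t^2}{1+t^2},\tfrac{2t}{1+t^2}\bigr)$ into the quadratic $F$ defining $Q$, and notes that $(1+t^2)^2F(\cdot)$ is a polynomial of degree at most $4$ in $t$, hence has at most four real roots. Your opening observation --- that the circle's polynomial $g$ is irreducible, so $Q\neq S$ forces $g\nmid h$ and the two conics share no component (equivalently, the substituted quartic is not identically zero) --- is exactly the point the paper leaves implicit, and invoking B\'ezout for two conics without a common component is a legitimate, if heavier, alternative to the parametrization. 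One concrete correction to your ``elementary'' variant: since $g$ has \emph{equal} coefficients at $x^2$ and $y^2$, subtracting multiples of $g$ from $h$ cannot kill both square terms unless $h$ already has equal coefficients there; in general one subtraction leaves a residual $(c-a)y^2$, and a further multiple of $g$ reintroduces $x^2$, so the reduction to $\beta xy+\gamma x+\delta y+\varepsilon=0$ is not available. Likewise, a bare resultant-in-one-variable count bounds only the admissible $x$-coordinates by $4$, and since each such $x$ may carry two points of the circle this gives $8$ without further care (the parametrization avoids this because distinct intersection points correspond to distinct values of $t$). Neither defect affects your conclusion, because the parametrization or B\'ezout route already completes the proof, but the elimination sketch as written does not.
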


\begin{proof}
Without loss of generality, we may assume that $S=\{(x,y)\,|\, x^2+y^2=1\}$ and $(-1,0) \not\in Q$.
Then we can parameterize $S\setminus \{(-1,0)\}$ as follows: $(x,y)=\left(\frac{1-t^2}{1+t^2},\frac{2t}{1+t^2}\right)$, $t\in \mathbb{R}$.
The curve $Q$ is determined by an equation $F(x,y)=0$, where $F(x,y)=a_{11}x^2+2a_{12}xy+a_{22}y^2+b_1x+b_2y+c$ for some $a_{11},a_{12},a_{22},b_1,b_2,c \in \mathbb{R}$.
Using the above parametrization, we get the equation $(1+t^2)^2\cdot F\left(\frac{1-t^2}{1+t^2},\frac{2t}{1+t^2}\right)=0$ for points of $S\cap Q$.
Obviously, this algebraic equation of the fourth degree has no more than 4 (real) solutions.
\end{proof}

We recall that all regular and semiregular convex polytopes in Euclidean spaces are homogeneous, see details in \cite{BerNik21n, BerNik21nn}.
At first, we consider regular polygons.

\begin{prop}\label{pr.regpolyg1}
Let $P$ be a regular $m$-gon in Euclidean plane $\mathbb{R}^2$. Then the following assertions hold:

{\rm  1.} $P$ is almost perfect, but is not perfect for $m=3$;

{\rm  2.} $P$ is not almost perfect for $m=4$;

{\rm  3.} $P$ is perfect for any $m\geq 5$.
\end{prop}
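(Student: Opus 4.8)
The plan is to treat the three regimes $m=3$, $m=4$, and $m\ge 5$ separately, using Lemma~\ref{le.four} as the central counting tool together with the central-symmetry results (Lemma~\ref{le.sym}, Corollary~\ref{cor.csym}) and the reduction to ellipsoids (Lemma~\ref{le.perf_0}). Throughout I place $P$ with center at the origin $O$ and vertices on $S=S(O,1)$, so that the vertices are the $m$-th roots of unity $v_k=(\cos\tfrac{2\pi k}{m},\sin\tfrac{2\pi k}{m})$, $k=0,\dots,m-1$; a competing second-order curve is $Q=\{F=0\}$ with $F(x,y)=a_{11}x^2+2a_{12}xy+a_{22}y^2+b_1x+b_2y+c$, and the question is whether $S\cap Q\supseteq\{v_0,\dots,v_{m-1}\}$ forces $Q=S$ (perfect), possibly after imposing that $Q$ is centrally symmetric about $O$, i.e. $b_1=b_2=0$ (almost perfect).

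\textbf{Case $m\ge 5$.} Here the argument is essentially immediate from Lemma~\ref{le.four}: a second-order curve $Q$ distinct from $S$ meets $S$ in at most $4$ points, but $Q$ would have to contain all $m\ge 5$ vertices. Hence $Q=S$, so $P$ is perfect, which also gives almost perfectness. This disposes of assertion~3.

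\textbf{Case $m=3$ (the regular triangle).} The triangle is not centrally symmetric, so Corollary~\ref{cor.csym} does not apply, and indeed $P$ is not perfect: one can exhibit an explicit ellipse through the three vertices other than $S$ — e.g. shrink $S$ in one coordinate direction while keeping it through the three vertices is impossible, so instead take a degenerate/nonconcentric conic; concretely, the pair of lines through the three vertices (any two sides extended, or more cleanly the union of one side with the line through the opposite vertex parallel to it) gives a genuine conic $F=0$ containing all three vertices with $F\ne\|x\|^2-1$ up to scaling, witnessing non-perfectness. For almost perfectness I must show that once we additionally require $Q$ to be a \emph{central} conic about $O$ (so $b_1=b_2=0$) and to pass through $v_0,v_1,v_2$, then $Q=S$. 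The three vertices of the triangle are $v_0=(1,0)$, $v_{1,2}=(-\tfrac12,\pm\tfrac{\sqrt3}{2})$; plugging these into $a_{11}x^2+2a_{12}xy+a_{22}y^2+c=0$ yields three linear equations in $(a_{11},a_{12},a_{22},c)$. Solving: $v_1,v_2$ differ only in the sign of $y$, so their difference forces $a_{12}=0$; then $v_0$ gives $a_{11}+c=0$ and $v_1$ gives $\tfrac14 a_{11}+\tfrac34 a_{22}+c=0$, whence $a_{22}=a_{11}$ and $c=-a_{11}$, i.e. $F=a_{11}(x^2+y^2-1)$. So the only central conic through the triangle's vertices is $S$ itself: $P$ is almost perfect. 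This is assertion~1.

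\textbf{Case $m=4$ (the square).} The square is centrally symmetric, so by Corollary~\ref{cor.csym} it suffices — and is in fact equivalent — to show it is not perfect; since every conic through the four vertices is automatically central by Lemma~\ref{le.sym}, failing perfect means failing almost perfect too. Take the vertices $(\pm 1,0),(0,\pm 1)$ (a square inscribed in $S$) and the conic $F(x,y)=x^2+\lambda y^2-? $ — cleaner: the four vertices satisfy $|x|+|y|=1$, but more to the point they lie on the conic $xy=0$? No; instead note they lie on $x^2+y^2=1$ and also on the \emph{pair of lines} $x^2+y^2-1=0$ is the circle, whereas $F(x,y)=x^2-1$ vanishes at $(\pm1,0)$ only. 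The correct witness: the four points $(\pm1,0),(0,\pm1)$ lie on every conic of the pencil $\alpha(x^2+y^2-1)+\beta\,\big((x^2-1)+(y^2-1)\big)$? That is again a multiple of the circle. The genuine freedom is the pencil spanned by $x^2+y^2-1$ and $x^2-y^2$: indeed $x^2-y^2$ vanishes at all four vertices, and it is not proportional to $x^2+y^2-1$. Hence $x^2+y^2-1+\varepsilon(x^2-y^2)=0$ is, for small $\varepsilon\ne 0$, an ellipse (coefficients $1+\varepsilon$ and $1-\varepsilon$ both positive) through all four vertices and distinct from $S$; by Lemma~\ref{le.perf_0} the square is not almost perfect, giving assertion~2.

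\textbf{Main obstacle.} The only nontrivial point is the almost-perfect direction for the triangle, where one must genuinely check that the linear system forces the conic to be the circle; but this is a $3\times 4$ linear algebra computation whose solution space turns out to be one-dimensional (spanned by $x^2+y^2-1$), so there is no real difficulty — the essential content of the proposition is just Lemma~\ref{le.four} for $m\ge5$ and two short explicit computations for $m=3,4$. I would present the $m\ge5$ case first (one line), then the square (exhibit $x^2-y^2$), then the triangle (the linear system), closing with a reference to Corollary~\ref{cor.csym} to package the square's conclusion.
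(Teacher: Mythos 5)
Your strategy is the paper's (Lemma~\ref{le.four} for $m\ge 5$, explicit conics for $m=3,4$), but the $m=4$ case contains a concrete error as written: with your vertices $(\pm1,0),(0,\pm1)$, the quadratic $x^2-y^2$ does \emph{not} vanish at the vertices (it equals $1$ at $(\pm1,0)$ and $-1$ at $(0,\pm1)$), so the curve $(1+\varepsilon)x^2+(1-\varepsilon)y^2=1$ contains none of the four vertices once $\varepsilon\neq0$, and your argument exhibits no valid witness. The witness you considered and dismissed, $xy$, is in fact the correct one: the pencil $x^2+y^2-1+\varepsilon\,xy=0$ consists, for $|\varepsilon|<2$, of central ellipses through all four of your vertices and distinct from the circle; equivalently, rotate the square to the vertices $\left(\pm\tfrac{1}{\sqrt2},\pm\tfrac{1}{\sqrt2}\right)$ and use the family $Lx^2+(2-L)y^2=1$, which is exactly the paper's argument. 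With this repair assertion~2 is fine, and your reduction via Lemma~\ref{le.sym} and Corollary~\ref{cor.csym} (not perfect implies not almost perfect for the square) is sound.

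The rest is correct, with minor differences of route from the paper. For $m\ge5$ your one-line use of Lemma~\ref{le.four} is the paper's proof verbatim. For $m=3$, you prove almost perfectness by solving the linear system for central conics through the three vertices, whereas the paper notes that a central conic through the vertices also contains their antipodes, hence six points of the circumscribed circle, and invokes Lemma~\ref{le.four}; both are equally short, the paper's being computation-free. For non-perfectness of the triangle you use a degenerate conic (a pair of parallel lines, e.g.\ $(x+\tfrac12)(x-1)=0$ for the standard vertices), while the paper exhibits a parabola; the degenerate witness is legitimate since it is the zero set of a quadratic polynomial, and if one insists on a nondegenerate (even central-free elliptical) witness, the perturbation remark following Lemma~\ref{le.perf_0} upgrades it.
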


\begin{proof}
The last assertion follows from Lemma \ref{le.four}.

Let us consider the case $m=4$. Take any $L\in \mathbb{R}$, then the second-order curve $Lx^2+(2-L)y^2=1$ contains four points of the form
$\left(\pm \frac{1}{\sqrt{2}},\pm \frac{1}{\sqrt{2}}\right)\in \mathbb{R}^2$, i.~e.
vertices of a square with side length $\sqrt{2}$. The circumscribed circle around this square is obtained at $L=1$.
Hence, the square is not almost perfect.

On the other hand, a regular triangle is almost perfect. Indeed, any second-order curve containing the vertices of a regular triangle
and symmetric with respect to its center $O$ also contains points symmetric to its vertices with respect to $O$, i.~e.
it contains the vertices of a regular hexagon, and therefore coincides with the circumscribed circle.
In order to prove that a triangle is not perfect, it suffices to note that the vertices $(0,-2\sqrt{3})$, $(-3,\sqrt{3})$, and $(3,\sqrt{3})$
are situated both on the circle $x^2+y^2=12$ and on the parabola $y=x^2/\sqrt{3}-2\sqrt{3}$.
\end{proof}
\smallskip

\begin{example}\label{ex.spectetr}
Let us consider the convex hull $P$ of the points $B_i\in \mathbb{R}^3$, $1\leq i \leq 4$, where
$$
B_1=(1,0,0),\, B_2=(0,1,0),\, B_3=\left(-\frac{1}{2}, -\frac{1}{2}, \frac{\sqrt{2}}{2}\right),\,
B_4=\left(-\frac{1}{2}, -\frac{1}{2}, -\frac{\sqrt{2}}{2}\right).
$$
It is easy to see that $P$ is a homogeneous tetrahedra,
$\|B_1\|=\|B_2\|=\|B_3\|=\|B_4\|=1$ (hence, all vertices of $P$ are on the  sphere $S$ with center at the origin and unit radius),
$\|B_1-B_2\|=\|B_3-B_4\|=\sqrt{2}$, and
$\|B_1-B_3\|=\|B_1-B_4\|=\|B_2-B_3\|=\|B_2-B_4\|=\sqrt{3}$.

Let us find all quadrics in $\mathbb{R}^3$, that contains all points $B_i$ for $i=1,2,3,4$ (recall that $B_i \in S$).
An arbitrary quadric $E$ is determined by the equation
$$
a_{11}x^2+a_{22}y^2+a_{33}z^2+2a_{12}xy+2a_{13}xz+2a_{23}yz+b_1x+b_2y+b_3z+c=0,
$$
where $(x,y,z)\in \mathbb{R}^3$, $a_{ij}$, $b_i$, $i,j=1,2,3$, and $c$ are some real numbers.
The condition $B_i \in E$, $i=1,2,3,4$,  is equivalent to the following one:
$$
b_1=-a_{11}-c, \quad b_2=-a_{22}-c, \quad b_3=a_{13}+a_{23},\quad a_{12}=-\frac{3}{2}\, a_{11}-\frac{3}{2}\, a_{22}-a_{33}-4c.
$$
Hence, we get a $6$-parameter family of quadrics containing all vertices of $P$. In particular, the polytope $P$ is not almost perfect
(since we can take $a_{13}=-a_{23}\neq 0$ and $a_{11}=a_{22}=-c$).
\end{example}

\begin{example}\label{ex.partop} Let $\psi_i$ be the reflection with respect to the hyperplane $x_i=0$ in $\mathbb{R}^n$, $i=1,\dots,n$.
The group $G=\left(\mathbb{Z}_2\right)^n$ generated by these reflections is transitive on the set $B$ consisting of the points
$(\pm b_1, \pm b_2, \dots, \pm b_n)$, where $b_i>0$, $i=1,\dots,n$.
It is clear that $P:=\conv(B)$ is a (rectangular) parallelotope. Let us consider an ellipsoid $E$ with the equation
$$
\frac{x_1^2}{a_1^2}+\frac{x_2^2}{a_2^2}+\frac{x_3^2}{a_3^2}+\cdots +\frac{x_n^2}{a_n^2}=1,
$$
where $a_i>0$, $i=1,\dots,n$, are some constant. The condition $B\subset E$ is equivalent to the following one:
$$
\sum_{i=1}^n\frac{b_i^2}{a_i^2}=1.
$$
This gives us a $(n-1)$-parameter family of ellipsoids containing the set $B$. In particular, the polytope $P$ is not almost perfect for $n\geq 2$.
If $b_1=b_2=\cdots=b_n$, we get a $n$-dimensional hypercube.
\end{example}

\begin{example}\label{ex.hpartop}
Let us consider the subset $\widetilde{B}$ of the set $B$ from the previous example, consisting of all points
$(\pm b_1, \pm b_2, \dots, \pm b_n)$ with even numbers of minuses.
This subset is homogeneous.
Indeed a subgroup $\widetilde{G}$ of the group
$G=\left(\mathbb{Z}_2\right)^n$, containing the isometries with even numbers of reflections, is transitive on the set $\widetilde{B}$.
It is clear that $\widetilde{B}$ is in the same ellipsoids, that contain $B$ in the previous example.
In particular, the polytope $P=\conv \left(\widetilde{B}\right)$ is not almost perfect for $n\geq 2$.
For $n=3$ such polytopes are simplices (if, in addition, $b_1=b_2=b_3$, then we get a regular $3$-simplex).
If $b_1=b_2=\cdots=b_n$, we get a $n$-dimensional demihypercube.
\end{example}

We know that a regular triangle is almost perfect by Proposition \ref{pr.regpolyg1}. Hence, the following question is very natural.

\begin{question}[\cite{BerNik21n}]\label{qu_tetr}
Is an $n$-dimensional regular simplex an almost perfect polytope for $n\geq 3$?
\end{question}

The following proposition gives  a negative answer to this question.

\begin{prop}\label{pr.regsimpl1}
The $n$-dimensional regular simplex is not almost perfect for $n \geq 3$.
\end{prop}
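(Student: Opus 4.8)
The plan is to exhibit, for every $n\geq 3$, one explicit second order hypersurface in $\mathbb{R}^n$ that contains all vertices of the regular $n$-simplex $P$, is symmetric with respect to the center of $P$, and is different from the circumscribed hypersphere of $P$; by Definition \ref{de.alper} this is exactly what ``not almost perfect'' means.

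First I would choose a convenient model for $P$. Realize the regular $n$-simplex as the convex hull of the vectors $v_i=e_i-\frac{1}{n+1}(e_1+\dots+e_{n+1})$, $i=1,\dots,n+1$, lying in the hyperplane $H=\{x\in\mathbb{R}^{n+1}:x_1+\dots+x_{n+1}=0\}$, which we identify with $\mathbb{R}^n$ by fixing an orthonormal basis. This is a translate of the standard simplex $\conv(e_1,\dots,e_{n+1})$, so $P$ is indeed a regular $n$-simplex; moreover the barycenter of the $v_i$ is the origin $O$, so $O$ is the center of $P$, and $\|v_i\|^2=\frac{n}{n+1}=:r^2>0$ for all $i$, so the circumscribed hypersphere is $\{x\in H:\|x\|^2=r^2\}$.

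Next I would write down the hypersurface. Since $n+1\geq 4$, put $F(x)=(x_1-x_3)(x_2-x_4)$, a homogeneous quadratic polynomial on $\mathbb{R}^{n+1}$, and restrict it to $H$; written in the chosen coordinates on $H\cong\mathbb{R}^n$ it is again a quadratic form. A one-line computation gives $(v_i)_1-(v_i)_3=\delta_{i1}-\delta_{i3}$ and $(v_i)_2-(v_i)_4=\delta_{i2}-\delta_{i4}$, hence $F(v_i)=(\delta_{i1}-\delta_{i3})(\delta_{i2}-\delta_{i4})$; since $\{1,3\}\cap\{2,4\}=\varnothing$, at least one factor vanishes for each $i$, so $F(v_i)=0$ for all $i=1,\dots,n+1$. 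Thus the set $\Sigma=\{x\in H:F(x)=0\}$ contains every vertex of $P$. It is symmetric with respect to $O$ because $F(-x)=F(x)$; it is an honest second order hypersurface because $F$ does not vanish identically on $H$ (for instance $F(e_1-e_2)=(1-0)(-1-0)=-1\neq 0$ with $e_1-e_2\in H$); and it is not the circumscribed hypersphere because it passes through $O$ (as $F(O)=0$) while the circumscribed hypersphere, having radius $r>0$, does not. This already proves the proposition. If one prefers to argue through Lemma \ref{le.perf_0}, one may instead use the ellipsoid $\{x\in H:\|x\|^2-r^2+\varepsilon F(x)=0\}$ for small $\varepsilon\neq 0$: it is bounded, centered at $O$, contains all $v_i$, and differs from the circumscribed hypersphere.

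The argument is elementary; the only real point is to hit on a sufficiently simple quadric (a union of two hyperplanes works, and its incidence with the vertices is immediate) together with the small bookkeeping confirming that $F|_H$ is a nonzero quadratic form and that $\Sigma$ is genuinely distinct from the sphere, both of which I settle by evaluating $F$ at the two explicit points $O$ and $e_1-e_2$. As a check that $n\geq 3$ is essential, I would note the dimension count: a quadric symmetric about $O$ has $\binom{n+1}{2}+1$ coefficients and the $n+1$ incidence conditions at the vertices are linear, so the space of such quadrics through the vertices of the regular $n$-simplex has dimension at least $\binom{n}{2}$, which exceeds $1$ exactly when $n\geq 3$ --- in agreement with Proposition \ref{pr.regpolyg1}, where the regular triangle is almost perfect.
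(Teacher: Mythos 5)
Your proof is correct, and it takes a genuinely different route from the paper. The paper argues by induction on the dimension: it first treats $n=3$ explicitly (realizing the regular tetrahedron inside the cube $\{\pm1\}^3$, where the quadrics $\alpha x^2+\beta y^2+\gamma z^2=1$, $\alpha+\beta+\gamma=1$, all pass through the vertices), and then, given a non-spherical centered quadric $f=0$ through the vertices of $T_k$, builds by hand a centered quadric $F=0$ through the vertices of $T_{k+1}$ embedded in $\mathbb{R}^{k+1}$. You instead give a single uniform construction valid for all $n\geq 3$ at once: in the hyperplane model $H=\{x_1+\cdots+x_{n+1}=0\}$ the split quadric $(x_1-x_3)(x_2-x_4)=0$ visibly contains every vertex $v_i$, is centrally symmetric (being a homogeneous form), and is not the circumsphere since it passes through $O$; your checks that $F|_H\not\equiv 0$ and that $F$ is even are exactly what is needed, so the argument meets Definition~\ref{de.alper} directly with no induction. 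What the paper's approach buys is a quadric produced within the same ambient-space normalization it uses elsewhere; what yours buys is brevity and transparency, since the incidence with the vertices is immediate from $\{1,3\}\cap\{2,4\}=\varnothing$ (this is also where $n\geq3$ enters, consistent with the triangle being almost perfect). Two small remarks: the hypersurface you use is a degenerate quadric (a pair of hyperplanes), which the paper's notion of ``second order hypersurface'' plainly allows (cf.\ the general equation~(\ref{eq_cubepol1})); and if you prefer the ellipsoid formulation of Lemma~\ref{le.perf_0}, your perturbation $\|x\|^2-r^2+\varepsilon F(x)=0$ does work, but you should add the one-line check that it differs from the circumsphere, e.g.\ because $F$ does not vanish at the point of the circumsphere proportional to $e_1-e_2$.
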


\begin{proof}
The case $n=3$ follows from Example \ref{ex.hpartop}. Indeed, the simplex with the vertices $(1,1,1)$, $(1,-1-1)$, $(-1,1,-1)$, $(-1,-1,1)$ is such that
any quadric of the form
$\alpha x^2+\beta y^2+\gamma z^2=1$,
where $\alpha,\beta,\gamma \in \mathbb{R}$, $\alpha+\beta+\gamma=1$, contains all its vertices. Hence, the $3$-dimensional regular simplex $T_3$ is not almost perfect.

Suppose that $k$-dimensional regular simplex $T_k$ is not almost perfect and prove that $T_{k+1}$ have the same property.
Me may suppose that all sides of $T_k$ have length $1$, the origin $O$ in $\mathbb{R}^k$ is the center of $T_k$, the quadric
$f(x)=0$ with center $O$ contains all vertices of $T_k$ and is distinct from the circumscribed sphere of $T_k$. Moreover, we may suppose in addition that $f(O)\neq 0$.
Indeed, if $f(O)=0$, and $r$ is the radius of the circumscribed sphere of $T_k$,
then we can replace $f(x)$ with $f(x)+\varepsilon \bigl(\|x\|^2-r^2\bigr)$ for any $\varepsilon >0$.

We will use the embedding  $\mathbb{R}^k\ni x \mapsto (x,0)\in \mathbb{R}^{k+1}$. Let us fix a positive number $\mu \in \mathbb{R}$.
For any vertex $A_i$, $i=1,\dots,k+1$, of $T_k$, we consider the point $B_i=(A_i,-\mu)$. Consider also the point $B_{k+2}=(0\dots,0,(k+1)\mu)$.
Obviously the baricenter of the polytope in $\mathbb{R}^{k+1}$ with the vertices $B_i$, $i=1,\dots,k+2$, is the origin $O$. Moreover, there is
(exactly one) $\mu >0$ such that
this polytope is $(k+1)$-dimensional regular simplex. Let us fix this special value of $\mu$.
Now, let us consider the quadric $F(x)=0$ in $\mathbb{R}^{k+1}$, where
$$
F(x_1,\dots,x_k,x_{k+1})= \frac{x_{k+1}^2}{\mu^2}-\frac{k(k+2)}{C}f(x_1,\dots,x_k)-1, \quad C=f(0,0,\dots, 0)\neq 0.
$$
Obviously, that this quadric is not a sphere, but the origin is the center of this quadric. It is easy to check also that $F(B_i)=0$ for all $i=1,\dots,k+2$.
Hence, the proposition is proved by induction.
\end{proof}

\begin{example}\label{ex.orthopl1}
Let us fix a number $d >0$ and consider in $\mathbb{R}^n$ the set of all points, every of which has exactly one coordinate $\pm d$ while other its $n-1$ coordinates are zero. The convex hull of this set is $n$-dimensional orthoplex $P$. Let us find all quadrics in $\mathbb{R}^n$ that contain all vertices of $P$.
Any such quadric has the form $F(x)=0$, where
\begin{equation}\label{eq_orthopol1}
F(x)=\sum_{i,j=1}^n a_{ij} x_ix_j+\sum_{i=1}^n b_i x_i+ c,
\end{equation}
where $a_{ij},b_i,c \in \mathbb{R}$, and $a_{ij}=a_{ji}$ for all indices. If we compare all pair of vertices symmetric each to other with respect to the origin $O$,
we easily get that $b_i=0$ and $a_{ii}d^2+c=0$ for all $1\leq 1 \leq n$. Therefore, $a_{11}=a_{22}=\cdots=a_{nn}= -c\cdot d^{-2}$.
The value of $a_{ij}$ for $i\neq j$ are not important. Therefore, we have the family of quadrics, depending on $n(n-1)/2+1$ parameters.
In particular, the $n$-dimensional orthoplex is not almost perfect for $n\geq 2$. Recall, that we get the square for $n=2$ and the octahedron for $n=3$.
\end{example}

\section{The reducibility,  L\"{o}wner~---~John ellipsoids, and the almost perfectness}\label{sect.2}

Let us recall that any regular $n$-simplex $T_n$ is not perfect for $n\geq 2$ and is not almost perfect for $n\geq 3$ by Proposition \ref{pr.regsimpl1}.
The symmetry group of $T_n$ is the symmetric group $S_{n+1}$, that acts irreducibly on $\mathbb{R}^n$.
Note that $S_3$ has only one proper subgroup (the cyclic group of order $3$),
acting transitively of the vertex set of the triangle $T_2$.

On the other hand, the almost perfectness of a given polytope implies
the non-reducibility of any transitive subgroup of its symmetry group.
\smallskip

Let us consider a finite orthogonal group $\Gamma \subset O(n)$ that have orbits, whose convex hulls are not degenerate.
We denote by $O_{\Gamma}(u)$ the orbit of the group $\Gamma$ through the point $u\in \mathbb{R}^n$.

\begin{prop}\label{pr.orap.2}
If there is a non-zero $u \in \mathbb {R}^n$ such the polytope  $P:=\conv(O_{\Gamma}(u))$ is degenerate, then $\Gamma$ acts reducibly on $\mathbb{R}^n$.
\end{prop}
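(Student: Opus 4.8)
The plan is to show that the affine hull of $P$ is a proper subspace and then to convert this into a linear invariant subspace for $\Gamma$. Since $\Gamma\subset O(n)$ fixes the origin and the orbit $O_\Gamma(u)$ is $\Gamma$-invariant, its convex hull $P$ is also $\Gamma$-invariant, and hence so is the affine hull $A$ of $P$. Because $P$ is degenerate, $\dim A \leq n-1$.

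First I would locate the center. By Proposition~\ref{pr.efhs} (or directly, since $\Gamma$ is a finite group acting on the orbit), the barycenter $x_0$ of the orbit $O_\Gamma(u)$ is fixed by every element of $\Gamma$; being a fixed point of a linear orthogonal group, it lies in the subspace of $\Gamma$-fixed vectors. Here one must split into two cases. If $x_0 \neq 0$, then $\Gamma$ already has a nonzero fixed vector, so the line $\mathbb{R}x_0$ is a proper nonzero invariant subspace (it is proper because the orbit is nondegenerate, so $\Gamma$ does not fix every vector), and $\Gamma$ acts reducibly. If $x_0 = 0$, then the affine hull $A$ of $P$ is in fact a linear subspace: it passes through the barycenter $x_0=0$. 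In that case $A$ is a $\Gamma$-invariant linear subspace with $0 < \dim A \leq n-1$ (it is nonzero since $u \in A$ and $u\neq 0$; it is proper since $P$ is degenerate), so again $\Gamma$ acts reducibly on $\mathbb{R}^n$.

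In either case we have produced a proper nonzero $\Gamma$-invariant linear subspace, which is exactly the assertion that $\Gamma$ acts reducibly. The one point that requires a little care — the main (minor) obstacle — is the separation of cases according to whether the barycenter is zero: when $x_0=0$ one uses the fact that the affine hull through the origin is a linear subspace, and when $x_0\neq 0$ one uses the fixed line instead. Apart from this dichotomy the argument is essentially the observation that ``$\Gamma$-invariant + degenerate'' forces a proper invariant flat, whose linear span (or whose directing subspace translated to pass through the $\Gamma$-fixed barycenter) is the desired invariant subspace.
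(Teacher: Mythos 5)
Your proof is correct, and it is close in spirit to the paper's argument but not identical. The paper's proof is a one-liner: $P$ and $\Lin(P)$ are $\Gamma$-invariant, and degeneracy of $P$ is claimed to give $1\le\dim\Lin(P)<n$, hence reducibility. You instead work with the affine hull $A$ of $P$ and split according to whether the barycenter $x_0$ of the orbit vanishes, using the fixed line $\mathbb{R}x_0$ when $x_0\neq 0$ and the (then linear) subspace $A$ when $x_0=0$. This extra dichotomy is not wasted effort: if the affine hull of a degenerate $P$ does not pass through the origin, the linear span of $P$ can be all of $\mathbb{R}^n$ --- for instance the orbit $\{(1,1),(1,-1)\}$ of the reflection in the first coordinate axis in $\mathbb{R}^2$ is a degenerate segment whose linear span is the whole plane --- so the inequality $\dim\Lin(P)<n$ as stated in the paper needs precisely the kind of supplement you provide, namely an invariant line through the $\Gamma$-fixed barycenter (equivalently, through the foot of the perpendicular from $O$ to the affine hull). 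Thus your route is a slightly longer but more watertight version of the same underlying idea: invariance of the hull of a degenerate orbit forces a nonzero proper $\Gamma$-invariant subspace. One small point: in the case $x_0\neq 0$, properness of $\mathbb{R}x_0$ only requires $n\ge 2$, which is indeed guaranteed by the standing assumption that $\Gamma$ has an orbit with non-degenerate convex hull, as your parenthetical remark indicates.
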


\begin{proof}
It is clear that $P$ and $\Lin(P)$ is invariant under the action of the group $\Gamma$. Since $P$ is degenerate, then  we have $1 \leq \dim(\Lin(P)) <n$. Hence,
$\Gamma$ acts reducibly on~$\mathbb{R}^n$.
\end{proof}

We are going to study some conditions on $\Gamma$ which imply the property to be (not to be) almost perfect for the convex hulls of the orbits of this group.

\begin{prop}\label{pr.orap.1}
If $\Gamma$ acts reducibly on $\mathbb{R}^n$, then the convex hull of any orbit of $\Gamma$,
that is not situated in some hyperplane, is not almost perfect.
\end{prop}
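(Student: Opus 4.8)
The plan is to produce, for the given orbit $O_{\Gamma}(u)$, an ellipsoid centered at the center of $P:=\conv(O_{\Gamma}(u))$ that passes through every vertex of $P$ but is not a sphere; by Lemma~\ref{le.perf_0} this is precisely what it means for $P$ not to be almost perfect. The first step is to identify the center of $P$. Since $\Gamma\subset O(n)$, every vertex of $P$ lies on the sphere $S(O,\|u\|)$. As $P$ is not contained in any hyperplane it is full-dimensional, hence has at least $n+1$ vertices, so Proposition~\ref{pr.efhs} applies and the circumscribed hypersphere of $P$ is centered at the barycenter $x_0$ of the orbit. If $x_0\neq O$, the vertices would lie in the intersection of the two distinct spheres $S(O,\|u\|)$ and $S(x_0,\rho)$ (for the appropriate $\rho$), which, by subtracting the two defining equations, is contained in an affine hyperplane — contradicting the hypothesis. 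Hence $x_0=O$, the center of $P$ is the origin, and the circumscribed hypersphere is $S(O,\|u\|)$.

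Next I would exploit reducibility. Since $\Gamma\subset O(n)$ acts reducibly, there is an orthogonal $\Gamma$-invariant splitting $\mathbb{R}^n=V\oplus V^{\perp}$ with $1\le\dim V\le n-1$. Write $u=u'+u''$ with $u'\in V$, $u''\in V^{\perp}$; neither summand vanishes, for otherwise the whole orbit, and hence $P$, would lie in one of the proper invariant subspaces $V$, $V^{\perp}$, again contradicting the hypothesis. Each $\gamma\in\Gamma$ preserves $V$ and $V^{\perp}$ and acts orthogonally, so $\gamma u=\gamma u'+\gamma u''$ with $\|\gamma u'\|=\|u'\|$ and $\|\gamma u''\|=\|u''\|$ for every $\gamma$. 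Consequently the quadratic function $x=x'+x''\mapsto \|x'\|^2/a^2+\|x''\|^2/b^2$ (parameters $a,b>0$; here $x'\in V$, $x''\in V^{\perp}$) is constant on the whole orbit, with value $\|u'\|^2/a^2+\|u''\|^2/b^2$.

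Finally I would choose the parameters and conclude. Fix any $a>\|u'\|$ and set $b^{2}:=\|u''\|^{2}\bigl(1-\|u'\|^{2}/a^{2}\bigr)^{-1}>0$, so that $\|u'\|^2/a^2+\|u''\|^2/b^2=1$. Then $E_{a,b}:=\{\,x:\|x'\|^2/a^2+\|x''\|^2/b^2=1\,\}$ is an ellipsoid centered at the origin (the center of $P$) that contains every vertex of $P$. Letting $a\to\infty$ gives $b^{2}\to\|u''\|^{2}<\|u\|^{2}<a^{2}$, so for $a$ large enough $a\neq b$; hence $E_{a,b}$ is not a sphere, in particular not the circumscribed hypersphere $S(O,\|u\|)$. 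By Lemma~\ref{le.perf_0}, $P$ is not almost perfect.

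I do not expect a genuine obstacle here: the argument is essentially a direct computation built on the fact that $\Gamma$ preserves the two orthogonal invariant summands. The only points that need a little care are (i) verifying that the center of $P$ is the origin, so that $E_{a,b}$ is an admissible competitor in the sense of Definition~\ref{de.alper} and Lemma~\ref{le.perf_0}, and (ii) checking that the single free parameter actually moves the quadric off the circumscribed sphere; both follow from the norm-preservation on $V$ and $V^{\perp}$ noted above.
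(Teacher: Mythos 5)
Your proof is correct and takes essentially the same route as the paper: an orthogonal $\Gamma$-invariant splitting makes the squared norms of the components constant on the orbit, which yields a family of non-spherical ellipsoids through all vertices, contradicting almost perfectness via Lemma~\ref{le.perf_0}. The only differences are minor — the paper uses the full decomposition into irreducible summands (an $s$-parameter family) rather than a single splitting $V\oplus V^{\perp}$, and it leaves implicit the verification, which you spell out, that the center of $P$ is the origin.
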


\begin{proof}
Let us consider the decomposition $\mathbb {R}^n=V_1\oplus \cdots \oplus V_s$ of $\mathbb {R}^n$ as an orthogonal sum of subspaces $V_i$, where $\Gamma$ acts irreducibly.
For any $x\in \mathbb {R}^n$ we consider the corresponding sum $x=\sum_{i=1}^s x_s$, where $x_i \in V_i$.
Let us fix a non-zero $u \in \mathbb {R}^n$, then $\|a(u)\|=\|u\|$ for every $a\in \Gamma$. It is easy to see also that $a(u)=\sum_{i=1}^s a(u_i)$.
Since $\Gamma$ acts irreducibly on every $V_i$, we have $a(u_i)=(a(u))_i$ and $\|u_i\|=\|a(u_i)\|$ for every $a\in \Gamma$ (recall that $\Gamma \subset O(n)$).
Put $r_i:=\|u_i\|$ and $d_i=\dim(V_i)$, then
we get $\|(a(u))_i\|=\|a(u_i)\|=\|u_i\|=r_i$ for any
$a\in \Gamma$ and any $1\leq i \leq s$. Therefore, $\sum_{i=1}^s r_i^2=\sum_{i=1}^s \|(a(u))_i\|^2$ for any $a\in \Gamma$.

Now, let us take some numbers $\alpha_i>0$, $1\leq i \leq s$. Then $\sum_{i=1}^s \alpha_i \|(a(u))_i\|^2=\sum_{i=1}^s \alpha_i r_i^2$ for any $a\in \Gamma$,
i.~e.  we have
$\sum_{i=1}^s \alpha_i \|v_i\|^2=\sum_{i=1}^s \alpha_i r_i^2$ for any $v$ in the orbit $O_{\Gamma}(u)$ of the group $\Gamma$ through the point $u$.
Hence, we have found an $s$-parameter family of ellipsoids, each of which contains all
points of $O_{\Gamma}(u)$. Hence, the polytope $\conv(O_{\Gamma}(u))$ is not almost perfect.
\end{proof}

\begin{remark}
If $P$ is the convex hull of the orbit of a point $u \in \mathbb {R}^n$ under the action of $\Gamma$ in the above proposition,
then $\Gamma$ is a subgroup of the symmetry group of $P$, that is transitive on the vertex set. On the other hand, the symmetry group of $P$ could be
more extensive than $\Gamma$. For example, the convex hull of the orbit of the point $(1,1,\dots,1) \in \mathbb{R}^n$ under the action of the group $\mathbb{Z}_2^n$,
generated by the reflections in all coordinate hyperplanes, is the standard $n$-dimensional cube. Its symmetry group is $\mathbb{Z}_2^n \rtimes S_n$.
\end{remark}

\begin{prop}
\label{pr.comm}
If there exists a commutative isometry subgroup $\Gamma$ of the polytope $P$ in $\mathbb{R}^n$, $n\geq 3$, which is transitive on the vertex set of $P$,
then $P$ is not almost perfect.
\end{prop}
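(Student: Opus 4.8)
The plan is to reduce the statement to Proposition~\ref{pr.orap.1} by showing that a commutative subgroup $\Gamma\subset O(n)$ that is transitive on the vertices of a nondegenerate polytope $P$ in $\mathbb{R}^n$ with $n\geq 3$ must act reducibly on $\mathbb{R}^n$. Once reducibility is established, Proposition~\ref{pr.orap.1} applies directly: $P$ is (up to translation to put its center at $O$, which we may do by Proposition~\ref{pr.efhs}) the convex hull of the $\Gamma$-orbit of any one of its vertices $u$, and this orbit is not contained in a hyperplane because $P$ is nondegenerate, hence $P$ is not almost perfect.

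To prove reducibility of a transitive commutative $\Gamma\subset O(n)$ for $n\geq 3$, first I would invoke the structure of commutative groups of orthogonal transformations: a commutative subgroup of $O(n)$ can be simultaneously put, by an orthogonal change of coordinates, into block-diagonal form with blocks of size $1$ (scalars $\pm 1$) and size $2$ (planar rotations), since commuting normal operators are simultaneously unitarily diagonalizable over $\mathbb{C}$ and real blocks of size $\leq 2$ result. The only way $\Gamma$ can fail to be reducible over $\mathbb{R}$ is if this block decomposition consists of a single $2\times 2$ rotation block, i.e. $n=2$; for $n\geq 3$ there are at least two blocks, hence a nontrivial proper $\Gamma$-invariant subspace, so $\Gamma$ acts reducibly. (Alternatively, and perhaps more cleanly for the write-up: if $\Gamma$ were irreducible and commutative, then by Schur's lemma its commutant — which contains $\Gamma$ itself and hence is commutative — would be a division algebra over $\mathbb{R}$ acting on $\mathbb{R}^n$, namely $\mathbb{R}$, $\mathbb{C}$, or $\mathbb{H}$; but a commutative irreducible group cannot have commutant $\mathbb{H}$, and commutant $\mathbb{R}$ forces $\Gamma\subset\{\pm \Id\}$ which is not transitive on $n+1\geq 4$ vertices, while commutant $\mathbb{C}$ forces $n=2$.) Either route shows $n=2$ is the only obstruction to reducibility.

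I expect the only delicate point to be making sure the transitivity hypothesis is genuinely used to exclude the degenerate small cases, and to confirm $n\geq 3$ is exactly what rules out the planar rotation group $SO(2)$ acting on a regular polygon (which is indeed commutative, transitive, and irreducible on $\mathbb{R}^2$, and for which the conclusion correctly fails since regular $m$-gons with $m\geq 5$ are perfect). So the statement is sharp, and the argument is essentially: commutative $+$ irreducible $+\, n\geq 3$ is impossible, therefore commutative $+$ transitive forces reducible, therefore Proposition~\ref{pr.orap.1} gives the conclusion. The main thing to be careful about is citing the right normal form / Schur-type lemma cleanly rather than redoing linear algebra; everything else is a one-line appeal to Proposition~\ref{pr.orap.1}.
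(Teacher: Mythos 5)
Your proposal is correct and follows essentially the same route as the paper: reduce to Proposition~\ref{pr.orap.1} via the fact that a commutative group admits no irreducible real representation of dimension greater than two, so transitive commutative $\Gamma\subset O(n)$ with $n\geq 3$ must act reducibly. The only difference is that you prove this representation-theoretic fact yourself (by simultaneous block-diagonalization, or Schur's lemma), whereas the paper simply cites it from Vinberg's book together with complete reducibility.
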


\begin{proof}
It follows from Proposition~\ref{pr.orap.1} and known facts that every real linear representation of a finite group $\Gamma$ is totally reducible
(Corollary in p.~2.3 in \cite{Vinb85}) and is one-dimensional or two-dimensional if $\Gamma$ is commutative and the representation is irreducible
(Exercise~7 at the end of Ch.~1 in \cite{Vinb85}).
\end{proof}

\begin{prop}
\label{pr.notap}
Every homogeneous right prism {\rm(}in particular, cube{\rm)}, in $\mathbb{R}^n$, $n\geq 2$, every right antiprism in $\mathbb{R}^3$,
as well as the regular simplex, orthoplex,
or demihypercube in $\mathbb{R}^n$, $n\geq 3$, are not almost perfect.
\end{prop}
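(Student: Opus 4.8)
The plan is to handle each listed family by exhibiting, for a suitable transitive isometry subgroup $\Gamma$ of the symmetry group, a reducible decomposition of $\mathbb{R}^n$, and then to invoke Proposition \ref{pr.orap.1}. First consider a homogeneous right prism $P \subset \mathbb{R}^n$: by definition its vertex set splits as (vertex set of a base polytope $Q \subset \mathbb{R}^{n-1}$) $\times \{\pm h\}$ for some $h>0$. Any symmetry of the base that is transitive on the vertices of $Q$ extends, together with the reflection in the hyperplane $x_n = 0$, to a transitive isometry group $\Gamma$ of $P$ that preserves the orthogonal splitting $\mathbb{R}^n = \mathbb{R}^{n-1} \oplus \mathbb{R} e_n$; hence $\Gamma$ acts reducibly, and since $P$ is non-degenerate it is not contained in a hyperplane, so Proposition \ref{pr.orap.1} applies. (The cube is the special case $Q$ a cube of one lower dimension, or alternatively it follows directly from Example \ref{ex.partop}.) The regular simplex, orthoplex and demihypercube for $n \geq 3$ are already covered: the simplex by Proposition \ref{pr.regsimpl1}, the orthoplex by Example \ref{ex.orthopl1}, and the demihypercube by Example \ref{ex.hpartop}; it suffices to cite these.

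The remaining case, a right antiprism in $\mathbb{R}^3$, I would treat the same way. An $m$-antiprism has two parallel regular $m$-gon faces lying in horizontal planes $z = \pm h$, mutually rotated by $\pi/m$. The rotation by $2\pi/m$ about the vertical axis permutes the vertices of each $m$-gon cyclically and fixes the splitting $\mathbb{R}^3 = \mathbb{R}^2 \oplus \mathbb{R} e_3$; composing it with the order-$2$ rotary reflection that swaps the top and bottom faces (the "antiprismatic" symmetry), one obtains a transitive isometry group $\Gamma$ of the antiprism, and every element of $\Gamma$ preserves the horizontal plane and the vertical axis as invariant subspaces. Thus $\Gamma$ is reducible on $\mathbb{R}^3$; the antiprism is three-dimensional hence not planar, so Proposition \ref{pr.orap.1} gives the conclusion. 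Equivalently, one can simply exhibit the one-parameter family of quadrics $\alpha(x^2 + y^2) + \beta z^2 = 1$ with $\alpha r^2 + \beta h^2 = 1$ (where $r$ is the circumradius of the $m$-gon faces) containing all vertices, which directly shows the antiprism is not almost perfect.

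The only mild obstacle is bookkeeping: one must be sure that in each case the chosen $\Gamma$ is genuinely \emph{transitive} on the full vertex set (not merely on one face), which for the prism and antiprism requires including the extra symmetry that exchanges the two bases, and one must confirm that "homogeneous right prism" forces the base to be a homogeneous — hence vertex-transitive — polytope so that the base symmetry group supplies the horizontal part of the transitive action. Once transitivity and reducibility are in hand, every case reduces to a one-line application of Proposition \ref{pr.orap.1} (or of the cited earlier examples), and no computation beyond identifying the invariant orthogonal summands is needed.
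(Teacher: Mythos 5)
Your proof is correct, and for the prism and antiprism it is essentially the paper's argument: the paper also reduces these cases to Proposition~\ref{pr.orap.1}, phrasing the reducibility via two parallel hyperfaces containing all vertices whose union is preserved by a transitive isometry subgroup, which is exactly the splitting $\mathbb{R}^{n-1}\oplus\mathbb{R}e_n$ (resp. $\mathbb{R}^2\oplus\mathbb{R}e_3$) that you write out explicitly; your extra one-parameter family $\alpha(x^2+y^2)+\beta z^2=1$ for the antiprism is a direct verification the paper does not bother with. The divergence is in the remaining cases: the paper treats the orthoplex by the same parallel-hyperfaces/reducibility argument (two opposite facets contain all $2n$ vertices), and handles the regular simplex and the demihypercube via Proposition~\ref{pr.comm}, exhibiting a commutative transitive isometry subgroup (a cyclic group of order $n+1$ for the simplex) so that reducibility follows from the representation theory of abelian groups; you instead simply cite the earlier direct computations (Proposition~\ref{pr.regsimpl1}, Examples~\ref{ex.orthopl1} and~\ref{ex.hpartop}), which were established independently and are perfectly adequate, indeed shorter, though the paper's choice better illustrates the group-theoretic criteria it has just developed. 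Finally, note that the point you flag as bookkeeping --- that transitivity of the chosen subgroup on the whole vertex set requires the base of a homogeneous right prism to be vertex-transitive --- is passed over in silence in the paper's proof as well (it simply asserts the existence of a transitive subgroup preserving the union of the two parallel hyperfaces), so your explicit acknowledgment of it, with the sketch of how the base inherits homogeneity after composing with the flip in the middle hyperplane, is if anything more careful than the original.
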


\begin{proof}
Let $P$ be either a right prism, or a right antiprism, or the regular orthoplex.
Then there are two hyperfaces which include all vertices of $P$.
Moreover, these hyperfaces are parallel to each other.
Then in any case there is an isometry subgroup of $P$
which preserves the union of these two hyperfaces and transitive
on the vertex set of~$P$.
This implies that hypotheses of Proposition~\ref{pr.orap.1} are satisfied, hence,
all mentioned polytopes are not almost perfect.

For the regular simplex or demihypercube in $\mathbb{R}^n$, $n\geq 3$, there exists a commutative isometry subgroup $\Gamma$
which is transitive on the vertex set (a cyclic group of order $n+1$ in the case of the simplex).
Then by Proposition~\ref{pr.comm}, both the regular simplex and the demihypercube are not almost perfect.
\end{proof}

\begin{remark}
In fact, any polytope in Proposition~\ref{pr.notap}, with possible exclusion of right prisms in $\mathbb{R}^n$, $n\geq 4$,
(see e.~g. the prisms from Proposition~\ref{pr.prism}), admits a commutative isometry subgroup, transitive on its vertex set.
\end{remark}

\begin{remark}
The homogeneous simplex from Example \ref{ex.spectetr} admits a commutative isometry subgroup $\Gamma$, transitive on its vertex set.
This group $\Gamma$ is generated by two commuting involutive isometries $g_1$ and $g_2,$ where $g_1=(B_1B_2)(B_3B_4)$,
$g_2=(B_1B_3)(B_2B_4)$.
\end{remark}

\begin{question}
Is it true that if a homogeneous polytope $P$ in $\mathbb{R}^n,$ $n\geq 3,$ has no reducible isometry subgroup, transitive on its vertex set, then $P$ is almost perfect?
\end{question}

Since any Archimedean semiregular solid in $\mathbb{R}^3$ admits no reducible isometry group which is transitive on its vertex set and has
``sufficiently many'' vertices, we suggest that the following conjecture is true.

\begin{conjecture}
Any Archimedean solid is perfect.
\end{conjecture}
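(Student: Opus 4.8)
The plan is to turn the conjecture into a single finite computation and to make that computation transparent by using the linear action of the symmetry group on the space of quadrics. Fix an Archimedean solid $P$, place its center at the origin $O$, write $r$ for its circumradius and $v_1,\dots,v_q$ for its vertices, and let $G$ be its symmetry group (for the two snub solids one takes instead the rotation group, which is already transitive on the vertices). By Lemma~\ref{le.perf_0} (cf. the discussion following it) together with the non-degeneracy of $P$, perfectness of $P$ is equivalent to $\dim K=1$, where $K$ is the space of all polynomials $F(x)=\sum_{i,j}a_{ij}x_ix_j+\sum_i b_ix_i+c$ (with $a_{ij}=a_{ji}$) vanishing at every vertex $v_k$; equivalently, the $q\times 10$ matrix with rows $\bigl(v_{k,i}v_{k,j}\ (i\le j),\,v_{k,i},\,1\bigr)$ should have rank $9$ (it always has rank at most $9$, the circumscribed sphere producing a kernel vector). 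One could check this rank directly from the standard coordinates of the thirteen solids, but to keep the bookkeeping light I would instead use that $K$ is a $G$-submodule of the $10$-dimensional $G$-module $V=\mathbb{R}\langle 1\rangle\oplus\langle x_1,x_2,x_3\rangle\oplus\mathrm{Sym}^2\langle x_1,x_2,x_3\rangle$, whose $G$-module structure is $\mathbb{R}_{\mathrm{triv}}\oplus(\mathbb{R}^3)^{*}\oplus\mathbb{R}_{\mathrm{triv}}\oplus W$, where $W\cong\mathrm{Sym}^2_0(\mathbb{R}^3)$ is the $5$-dimensional module of traceless quadratic forms (the trace being the only $G$-invariant in $\mathrm{Sym}^2$, as $G$ acts irreducibly and orthogonally on $\mathbb{R}^3$).

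First I would dispose of the two easy isotypic pieces. The trivial isotypic component of $V$ is exactly $\langle 1,\ \|x\|^2\rangle$, and among its elements only the multiples of $\|x\|^2-r^2$ vanish on the circumscribed sphere, so $K$ meets it in precisely $\mathbb{R}(\|x\|^2-r^2)$. The vector module $\langle x_1,x_2,x_3\rangle$ cannot lie in $K$: since $P$ is non-degenerate, the vertices $v_k$ span $\mathbb{R}^3$, so no nonzero linear functional vanishes on all of them, and since $\mathbb{R}^3$ is $G$-irreducible this forces $K$ to miss that copy entirely. Hence everything reduces to proving $W\cap K=0$.

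For the six solids with icosahedral symmetry (the group $I_h$, or $I$ for the snub dodecahedron) the module $W$ is irreducible, so $W\cap K$ is $0$ or all of $W$; it is not all of $W$, because for any single vertex $v\ne O$ the functional $q\mapsto q(v)$ on $W$ is nonzero --- for instance the traceless form $q(x)=(v,x)^2-\tfrac13\|v\|^2\|x\|^2$ lies in $W$ and satisfies $q(v)=\tfrac23\|v\|^4>0$ --- so a single vertex already rules out $W\subseteq K$. For the seven solids with octahedral or tetrahedral symmetry, choose coordinates along three mutually orthogonal $2$-fold axes; then $W$ splits as $E\oplus T_2$ with $E=\langle x_1^2-x_2^2,\ x_1^2+x_2^2-2x_3^2\rangle$ and $T_2=\langle x_1x_2,\ x_2x_3,\ x_3x_1\rangle$, and since $E$ and $T_2$ are non-isomorphic irreducibles the only submodules of $W$ are $0,\ E,\ T_2,\ W$. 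If $E\subseteq K$, then $x_1^2=x_2^2=x_3^2$ at every vertex, so all $v_k$ lie among the eight vertices of a cube; if $T_2\subseteq K$, then $x_1x_2=x_2x_3=x_3x_1=0$ at every vertex, so all $v_k$ lie on the three coordinate axes; neither is possible for an Archimedean solid. Hence $W\cap K=0$ in every case, $\dim K=1$, and $P$ is perfect. Since perfectness is invariant under isometries (in particular under reflections), it suffices to treat one member of each chiral pair.

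The step requiring real care is the only Archimedean solid with purely tetrahedral symmetry, the truncated tetrahedron, where $G=T_d\cong S_4$: here, unlike the other cases, $W$ contains a \emph{second} copy of the vector module ($\mathrm{Sym}^2_0(\mathbb{R}^3)\cong E\oplus T_2$, with $T_2$ the polar vector representation), and this solid is not centrally symmetric, so a quadric through its vertices may a priori carry linear terms; one must then rule out the entire pencil of ``vector-type'' submodules $a\langle x_1,x_2,x_3\rangle\oplus b\langle x_2x_3,\,x_3x_1,\,x_1x_2\rangle$, not merely $\langle x_1,x_2,x_3\rangle$. I would handle this by an explicit computation in the standard coordinates (the twelve permutations of $(\pm 3,\pm 1,\pm 1)$ with an even number of minus signs), checking that no nonzero combination $a\,x_1+b\,x_2x_3$ vanishes at all twelve vertices: evaluation at $(3,1,1)$ forces $b=-3a$, and then evaluation at $(1,3,1)$ forces $a=0$; the two conjugate functions behave the same way. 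Apart from this, the argument is routine bookkeeping over the thirteen solids, and the same conclusion would also follow --- less elegantly --- from a direct rank computation of the matrices above.
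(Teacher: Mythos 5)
First, a point of calibration: the paper does not prove this statement at all. It is posed as a conjecture, supported only by the heuristic remark that no Archimedean solid admits a reducible vertex-transitive isometry subgroup, and the authors say explicitly in the introduction that the Archimedean case has not been worked out. So there is no proof in the paper to compare yours with; what you propose is an actual solution of the open question, and, as far as I can check, it is sound. Your route --- regard the space $K$ of polynomials of degree at most two vanishing on the vertex set as a $G$-submodule of the $10$-dimensional module $\mathbb{R}\oplus\langle x_1,x_2,x_3\rangle\oplus\mathrm{Sym}^2(\mathbb{R}^3)$, split off the trivial part $\langle 1,\|x\|^2\rangle$ and the vector part, and then kill the traceless part $W$ isotypic component by isotypic component --- is genuinely different from the paper's positive techniques (inscribed cubes as in Corollary~\ref{cor.partop}, distance spheres as in Proposition~\ref{pr.dist}, edge-adapted coordinate frames as for the icosahedron), and it is stronger where it matters: the paper uses reducibility only in the negative direction (Proposition~\ref{pr.orap.1}), while you combine irreducibility with the decomposition of $\mathrm{Sym}^2_0(\mathbb{R}^3)$ to force perfectness up to small finite checks. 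The individual steps are correct: the identification of the trivial component of $K$ with $\mathbb{R}(\|x\|^2-r^2)$; the exclusion of $E\subseteq K$ (all vertices would lie among the $8$ vertices of a cube, impossible since every Archimedean solid has at least $12$ vertices) and of $T_2\subseteq K$ (all vertices on the three axes); and, crucially, your isolation of the truncated tetrahedron, where $\mathrm{Sym}^2_0(\mathbb{R}^3)$ contains a second copy of the vector representation of $T_d$, so the whole pencil of graph submodules must be excluded --- your evaluation at $(3,1,1)$ and $(1,3,1)$ does exactly that, and I verified it. Note that the paper's Corollary~\ref{cor.partop} could not even be applied to this solid, since its vertex set contains no antipodal pairs and hence no cube centered at its center.

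Two points should be made explicit before this can stand as a proof. First, the representation-theoretic inputs are asserted rather than established: that $W=\mathrm{Sym}^2_0(\mathbb{R}^3)$ is irreducible ($5$-dimensional) under $I$ and $I_h$, and decomposes as $E\oplus T_2$ with non-isomorphic summands under $T_d$, $O$, $O_h$; these are routine character computations, but they carry the whole argument and must be included or cited. Second, you flag only the truncated tetrahedron as delicate, but the snub cube deserves one sentence as well: its full symmetry group is the rotation group $O$, abstractly isomorphic to $T_d\cong S_4$, so one must check that under $O$ the vector module $\langle x_1,x_2,x_3\rangle$ is \emph{not} isomorphic to $\langle x_2x_3,x_3x_1,x_1x_2\rangle\subset W$ (it is not: their characters on a $4$-fold rotation are $+1$ and $-1$ respectively); otherwise a second pencil would have to be excluded there too. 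With these details supplied, your argument settles the conjecture; as you say, a direct rank-$9$ verification of the $q\times 10$ evaluation matrices for the thirteen solids would give the same conclusion less transparently.
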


\smallskip

Recall that for a given convex body (bounded closed subset) $A$ in $\mathbb {R}^n$, {\it its L\"{o}wner~---~John ellipsoid} $E$
is a unique ellipsoid of minimal volume containing $A$, see e.~g. \cite{Ball97, BarBl05}.
It is obvious that $E$ is invariant under every isometry of $P$. In particular, it is easy to check that the L\"{o}wner~---~John ellipsoid
for the hypercube is the circumscribed hypersphere.

\begin{prop}\label{pr.orap.3}
Let $P$ be a non-degenerate homogeneous polytope in $\mathbb{R}^n$. Then either the  L\"{o}wner~---~John ellipsoid
for $P$ coincides with the circumscribed sphere of $P$, or the symmetry group of $P$  acts reducibly on~$\mathbb{R}^n$.
\end{prop}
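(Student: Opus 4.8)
The plan is to argue by contradiction using the uniqueness and equivariance of the Löwner–John ellipsoid. Suppose $P$ is a non-degenerate homogeneous polytope whose Löwner–John ellipsoid $E$ does \emph{not} coincide with the circumscribed sphere $S'$ of $P$; I want to produce a non-trivial $\Gamma$-invariant subspace, where $\Gamma$ denotes the (full) symmetry group of $P$. First I would normalise so that the center of $P$ is the origin $O$; by Proposition~\ref{pr.efhs} every symmetry of $P$ fixes $O$ and lies in $O(n)$, and all vertices lie on $S' = S(O,r)$ for some $r>0$. Since $E$ is the \emph{unique} ellipsoid of minimal volume containing $P$ and every $g\in\Gamma$ maps $P$ onto itself, $g(E)$ is again a minimal-volume ellipsoid containing $P$, hence $g(E)=E$; thus $E$ is $\Gamma$-invariant. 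In particular $E$ is a bounded quadric invariant under the full group of $\Gamma$-isometries fixing $O$, so its center is a $\Gamma$-fixed point, namely $O$ itself (the center of $E$ is carried to the center of $g(E)=E$ by each $g$, and $O$ is the unique $\Gamma$-fixed point since $\conv(O_\Gamma(v))=P$ is non-degenerate for a vertex $v$). Hence $E$ has an equation $q(x)=1$ for a positive-definite symmetric form $q$, and $g^\ast q = q$ for all $g\in\Gamma$.

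Next I would diagonalise $q$ relative to the standard inner product: there is an orthogonal decomposition $\mathbb{R}^n = W_1\oplus\cdots\oplus W_k$ into the eigenspaces of the self-adjoint operator $A$ representing $q$ (so $q(x)=(Ax,x)$), with distinct eigenvalues $\lambda_1>\cdots>\lambda_k>0$. Because $g$ is orthogonal and commutes with $A$ (this is exactly the condition $g^\ast q=q$), each $g\in\Gamma$ permutes the eigenspaces $W_j$, and since it preserves eigenvalues it fixes each $W_j$ setwise: every $W_j$ is $\Gamma$-invariant. If $k\ge 2$, then each $W_j$ is a proper non-zero $\Gamma$-invariant subspace and we conclude that $\Gamma$ acts reducibly on $\mathbb{R}^n$, as desired. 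It remains to rule out $k=1$: if $k=1$ then $A=\lambda\,\Id$, so $E$ is the sphere $\{x : \|x\|^2 = 1/\lambda\}$. But $E$ contains $P$, hence contains all vertices of $P$, which lie on $S(O,r)$; a sphere centered at $O$ containing the vertices is forced by the minimality of $E$ (the circumscribed sphere $S'$ already contains $P$, so $\vol(E)\le\vol(S')$, while $E\supseteq P$ forces radius of $E$ at least $r$, hence $E=S'$), contradicting our assumption $E\ne S'$. Therefore $k\ge 2$ and $\Gamma$ is reducible.

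The main obstacle is the bookkeeping around the center: I must be sure that the center of the Löwner–John ellipsoid really is $O$, the center of $P$, and not merely some $\Gamma$-fixed point that happens to coincide with it. This follows because $P$ is non-degenerate and homogeneous, so its vertex set affinely spans $\mathbb{R}^n$; a point fixed by all of $\Gamma$ must be fixed by the whole symmetry group, and averaging the vertices (which $\Gamma$ permutes transitively) shows the barycenter $O$ is the unique such point. A second, more routine point to handle carefully is the step ``$E\supseteq P$ and $\vol(E)\le\vol(S')$ force $E=S'$ when $E$ is a round sphere centered at $O$'': here one uses that among spheres centered at $O$ containing $P$, the smallest is exactly $S'$, and minimality of $E$ pins it down. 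Once these two points are in place, the eigenspace-decomposition argument is immediate, and the proposition follows.
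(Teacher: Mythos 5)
Your proposal is correct and follows essentially the same route as the paper: $\Gamma$-invariance of the L\"{o}wner---John ellipsoid $E$ via its uniqueness, then the eigenspace decomposition of the positive definite operator defining $E$, giving reducibility unless the operator is a multiple of the identity, in which case minimality of volume forces $E$ to be the circumscribed sphere. The only (harmless) deviation is at the center: you argue that the center of $E$ must coincide with $O$ using homogeneity and non-degeneracy, whereas the paper simply observes that if the center were a different $\Gamma$-fixed point, the line joining it to $O$ would already be an invariant subspace, yielding reducibility directly.
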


\begin{proof}
Without loss of generality we may suppose that the
circumscribed sphere of $P$ is $S(O, 1)\subset\mathbb {R}^n$.
We denote by $E$ and $\Gamma$ the L\"{o}wner~---~John ellipsoid and the symmetry group of $P$ respectively.

Let $O_1$ be the center of $E$. Since the L\"{o}wner~---~John ellipsoid for $P$ is unique, then $\gamma(E)=E$ for any $\gamma \in \Gamma$, hence
$\gamma(O_1)=O_1$.

If $O_1\neq O$, then the straight line $OO_1$ is invariant under the action of $\Gamma$, hence, $\Gamma$ acts reducibly on~$\mathbb{R}^n$.

Now, we suppose that $O_1=O$.
Then there is an unique symmetric positive definite operator
$A: \mathbb{R}^n \rightarrow \mathbb{R}^n$ such that $E=(Ax,x)=1$.

Since $E$ is $\Gamma$-invariant, all eigenspaces of $A$ are also $\Gamma$-invariant. Hence, if $A$ is not a multiple of the identity operator, then
$\Gamma$ acts reducibly on $\mathbb{R}^n$.

Otherwise, $A=a\cdot \operatorname{I}$ for some $a>0$, and $E$ should coincide with unit hypersphere in $\mathbb{R}^n$. Indeed, $E$ contains $P$ and has minimal volume
among hyperspheres with this property, but the vertices of $P$ are situated on the sphere $S(O,1)$.
\end{proof}
\smallskip

We immediately get the following result from Propositions \ref{pr.orap.1} and \ref{pr.orap.3}.

\begin{corollary}
Let $P$ be a homogeneous polytope and $E$ its L\"{o}wner~---~John ellipsoid in~$\mathbb{R}^n$. If $E$
does not coincide with the circumscribed sphere of $P$, then the full isometry group of $P$ is reducible and $P$ is not almost perfect.
\end{corollary}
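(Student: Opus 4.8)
The statement to prove is the Corollary following Proposition~\ref{pr.orap.3}: if the L\"{o}wner~---~John ellipsoid $E$ of a homogeneous polytope $P\subset\mathbb{R}^n$ does not coincide with the circumscribed sphere of $P$, then the full isometry group of $P$ acts reducibly on $\mathbb{R}^n$ and $P$ is not almost perfect. The plan is to simply chain the two propositions cited just before the corollary. First I would invoke Proposition~\ref{pr.orap.3}: it gives a dichotomy --- either $E$ is the circumscribed sphere of $P$, or the symmetry group $\Gamma$ of $P$ acts reducibly on $\mathbb{R}^n$. Since by hypothesis the first alternative fails, we are forced into the second: $\Gamma$ is reducible. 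This already yields the first assertion of the corollary.

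For the second assertion, the point is that $P$ is itself the convex hull of an orbit of $\Gamma$. Indeed, $\Gamma$ is transitive on the vertex set of $P$ (this is the definition of homogeneity), so picking any vertex $u$ of $P$ we have $O_{\Gamma}(u) = \{\text{vertices of }P\}$ and hence $P = \conv(O_{\Gamma}(u))$. Moreover $P$ is non-degenerate (a homogeneous polytope is by convention non-degenerate, or in any case the orbit of $u$ under $\Gamma$ is not contained in a hyperplane, since otherwise $P$ would be degenerate). So the hypotheses of Proposition~\ref{pr.orap.1} are met with this $\Gamma$ and this $u$: $\Gamma$ acts reducibly and $O_{\Gamma}(u)$ is not contained in a hyperplane. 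Proposition~\ref{pr.orap.1} then concludes directly that $\conv(O_{\Gamma}(u)) = P$ is not almost perfect.

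There is essentially no obstacle here --- the corollary is a formal consequence of the two preceding propositions, and the only thing to check carefully is the bookkeeping that lets Proposition~\ref{pr.orap.1} apply, namely that $P$ really is $\conv$ of a single $\Gamma$-orbit that spans $\mathbb{R}^n$ affinely (equivalently, is not contained in a hyperplane). This is immediate from non-degeneracy of $P$ together with vertex-transitivity of the symmetry group. If one wanted to be scrupulous, one could also note that since $P$ is non-degenerate, the center of $P$ (the barycenter of its vertices) is $\Gamma$-fixed and the orbit through any vertex, translated so this center is the origin, is the orbit of a non-zero vector; that is all Proposition~\ref{pr.orap.1} uses.
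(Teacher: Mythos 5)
Your proposal is correct and matches the paper's own argument, which simply derives the corollary immediately from Propositions~\ref{pr.orap.3} and \ref{pr.orap.1}; your extra bookkeeping (that $P$ is the convex hull of a single full-dimensional $\Gamma$-orbit through a vertex, with the fixed barycenter taken as the origin) is exactly what makes that chaining legitimate.
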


\begin{example}\label{ex.parallelepiped}
A trivial example is a rectangular parallelotope in $\mathbb{R}^n,$ $n\geq 2,$ which is not a cube.
Indeed, if $E$ is the L\"{o}wner~---~John ellipsoid for a polytope $P$ and $\alpha: \mathbb{R}^n \rightarrow \mathbb{R}^n$ is an affine map, then
the ellipsoid $\alpha(E)$ is the L\"{o}wner~---~John ellipsoid for the polytope $\alpha(P)$.
\end{example}

\section{Examples of perfect polytopes}\label{sect.3}

The following result provides a powerful tool for proving the perfectness of some types of polytopes.

\begin{prop}\label{pr.partop}
Let $P$ be a convex hull of the points
$(\pm d_1, \pm d_2, \dots, \pm d_n)\in \mathbb{R}^n$, where $d_i>0$, $i=1,\dots,n$.
Suppose that there is a quadric $F(x)=0$ that contains all vertices of $P$.
Then there are some real numbers $A_i>0$, $i=1,\dots,n$, such that
$$
F(x)=A_1 x_1^2+A_2 x_2^2+A_3 x_3^2+\cdots +A_n x_n^2-\sum_{i=1}^n A_i{d\,_i^2}.
$$
\end{prop}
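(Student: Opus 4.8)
The plan is to exploit the symmetry group of $P$, which contains the group $G = (\mathbb{Z}_2)^n$ generated by the reflections $\psi_i$ in the coordinate hyperplanes $x_i = 0$, as in Example \ref{ex.partop}. Write the given quadric as $F(x) = \sum_{i,j} a_{ij} x_i x_j + \sum_i b_i x_i + c$ with $a_{ij} = a_{ji}$. First I would observe that the vertex set $B = \{(\pm d_1, \dots, \pm d_n)\}$ is $G$-invariant, so for each $\gamma \in G$ the polynomial $F \circ \gamma$ also vanishes on all of $B$; since $F$ vanishes on $B$, so does the averaged polynomial $\widetilde{F} := \frac{1}{|G|} \sum_{\gamma \in G} F \circ \gamma$, and $\widetilde F$ is by construction $G$-invariant. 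A monomial $x_i x_j$ with $i \neq j$ changes sign under $\psi_i$, and $x_i$ changes sign under $\psi_i$, so averaging kills all such terms: $\widetilde F(x) = \sum_i a_{ii} x_i^2 + c$.

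Next I would argue that this averaging does not actually change $F$, i.e. that $F$ was already of this diagonal form. The cleanest route: for a fixed pair $i \neq j$, apply $\psi_i$ to the identity $F(v) = 0$ for a vertex $v$ to get $F(\psi_i v) = 0$; subtracting suitable combinations over the four vertices differing only in the signs of the $i$-th and $j$-th coordinates isolates the cross term $a_{ij} d_i d_j = 0$, forcing $a_{ij} = 0$ (as $d_i, d_j > 0$). Similarly, comparing $F(v)$ with $F(\psi_i v)$ over a pair of vertices differing only in the $i$-th sign gives $b_i d_i = 0$, so $b_i = 0$. Alternatively one may simply note that $F$ and $\widetilde F$ are two quadrics vanishing on the same set $B$, and — but this needs $B$ to determine the quadric, which is false here, so I would stick with the direct sign-cancellation computation, which is elementary. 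Either way we conclude $F(x) = \sum_i A_i x_i^2 + c$ with $A_i := a_{ii}$.

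It then remains to evaluate $c$ and to show $A_i > 0$. Plugging any vertex into $F = 0$ gives $\sum_i A_i d_i^2 + c = 0$, hence $c = -\sum_i A_i d_i^2$, which is exactly the claimed form. For positivity of the $A_i$: the quadric $F(x) = 0$, i.e. $\sum_i A_i x_i^2 = \sum_i A_i d_i^2 =: R > 0$, passes through $B$ which contains $n$ affinely independent points spanning a nondegenerate polytope, so the quadric is not empty and not a hyperplane; if some $A_i \le 0$ the surface would be unbounded (a cylinder or a genuine hyperboloid/paraboloid-type set) — but here is the one subtle point: being unbounded is not itself a contradiction, since the proposition as stated only assumes $F$ vanishes on $B$, not that $F = 0$ is an ellipsoid. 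I expect this to be the main obstacle, and I would resolve it by recalling (as in the discussion after Lemma \ref{le.perf_0}) that the hypothesis implicitly intends $F = 0$ to be a genuine ellipsoid, or by adding the one-parameter family trick: the vertices lie on $\|x\|^2 - R$ and on $F$, hence on $\|x\|^2 - R + \varepsilon F(x) = 0$, whose leading form is $\sum_i (1 + \varepsilon A_i) x_i^2$; choosing $\varepsilon$ small and appealing to the uniqueness-type argument (or simply to the statement of the proposition allowing us to rename coefficients) yields $A_i > 0$ after renormalization. The genuinely routine part is all the sign bookkeeping; the part worth stating carefully is why the coefficients of the squares can be taken positive, which rests on the quadric being (or being assumed to be) an ellipsoid through the full orbit $B$.
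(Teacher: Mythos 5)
Your core argument is correct and is essentially the paper's: both proofs exploit the sign symmetries of the vertex set $(\pm d_1,\dots,\pm d_n)$ to kill the linear and off-diagonal terms and then evaluate the constant at a vertex. The bookkeeping differs only slightly: you isolate each $a_{ij}$ by a four-vertex alternating sum and afterwards get $b_i=0$ from a two-vertex comparison (valid in that order, since that comparison reads $4v_i\sum_{j\neq i}a_{ij}v_j+2b_iv_i=0$), whereas the paper gets $b_i=0$ at once from central symmetry (Lemma \ref{le.sym}) and kills all cross terms for a fixed $i$ simultaneously by one vertex pair whose $j$-th signs are adapted to the signs of the $a_{ij}$, giving $0=F(G)-F(\widetilde{G})=4d_i\sum_{j\neq i}|a_{ij}|d_j$; your abandoned averaging detour was rightly abandoned. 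Your worry about the positivity claim is well founded: the hypotheses do not imply $A_i>0$ (for instance $x_1^2-x_2^2=d_1^2-d_2^2$ contains all vertices when $n\geq 2$), the paper's own proof never addresses it --- it simply sets $A_i=a_{ii}$ --- and the $\varepsilon$-perturbation you sketch produces a different polynomial, so it cannot restore positivity of the original $F$. The ``$A_i>0$'' should be read as a misprint: in the only application (Corollary \ref{cor.partop}) the paper itself invokes the proposition with $A_i\in\mathbb{R}$, which is exactly what your argument (and the paper's) establishes.
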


\begin{proof}
Recall that Example \ref{ex.partop} gives us a $(n-1)$-parameter family of ellipsoids containing the vertex set of $P$.
Now, we suppose that the quadric has the form (\ref{eq_cubepol1}). It is clear that $P$ is centrally symmetric with respect
to the origin $O.$ Then by Lemma \ref{le.sym}, $b_i=0$ for all indices. It remains to prove that $a_{ij}=0$ for all $i\neq j$.

Now, let fix some $i$ and let us prove that $a_{ij}=0$ for all $j\neq i$.
We consider the vertex $G=(\pm d_1, \pm d_2, \dots, \pm d_n)$, where the sign of the $i$-th coordinate is ``+", whereas the sign of the $j$-th coordinate for $j\neq i$
is ``--" if and only if $a_{ij}<0$.
We consider also the vertex $\widetilde{G}$, that differs from $G$ only in $i$-th coordinate
(it has $-d_i$ instead of $d_i$). We get
$$
0=F(G)-F(\widetilde{G})=4d_i \sum_{j\neq i} |a_{ij}|d_j.
$$
This implies that $a_{ij}=0$ if $i\neq j$.
Therefore, we can take $A_i=a_{ii}$ and the proposition is proved.
\end{proof}
\smallskip

\begin{corollary}\label{cor.partop}
Let $P$ be a homogeneous polytope in $\mathbb{R}^n$ with the following properties:

{\rm  1.} There is  a $n$-dimensional cube $C$, whose vertex set is the a proper subset of the vertex set of $P$, whereas the center of $C$ coincides with the center of $P$;

{\rm  2.} There is an edge $l$ of $C$, such that a supporting hyperplane $L$ to $P$, orthogonal to this edge, does not contain two vertices of $P$
with the distance between them equal to the length of $l$.

\begin{figure}[t]
\begin{minipage}[h]{0.48\textwidth}
\center{\includegraphics[width=0.9\textwidth, trim=0in 0in 0mm 0mm, clip]{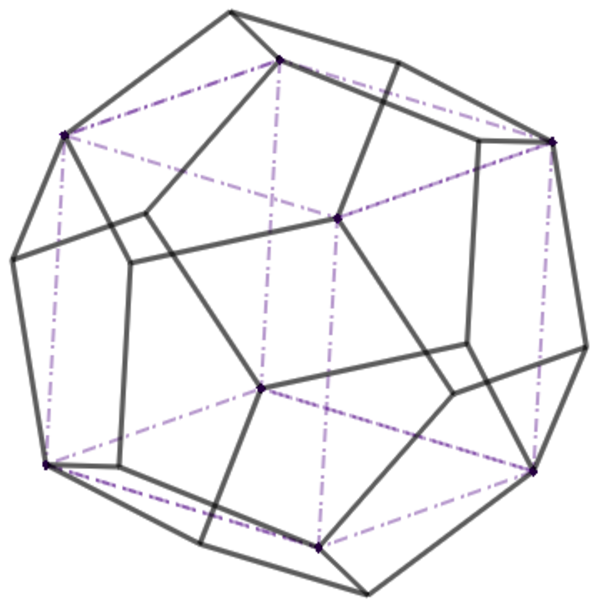} \\ a)}
\end{minipage}\quad
\begin{minipage}[h]{0.48\textwidth}
\center{\includegraphics[width=0.9\textwidth, trim=0in 0in 0mm 0mm, clip]{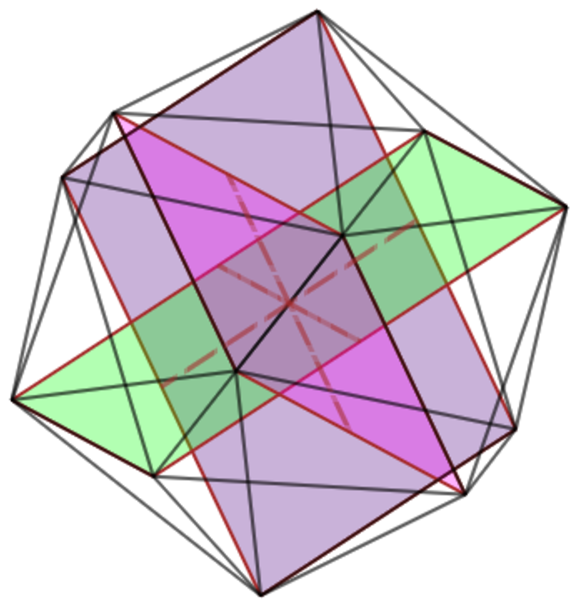} \\ b)}
\end{minipage}
\caption{Illustrations for:
a) Example \ref{ex:dodec1};
b) Proposition \ref{pr.icosa1}.
}
\label{Fig_dod_icos}
\end{figure}

Then $P$ is perfect.
\end{corollary}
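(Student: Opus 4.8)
The plan is to reduce the perfectness of $P$ to Proposition \ref{pr.partop} applied to the inscribed cube $C$. Place the common center of $C$ and $P$ at the origin $O$, and after an orthogonal change of coordinates assume that the vertices of $C$ are the $2^n$ points $(\pm d_1,\dots,\pm d_n)$; since $C$ is a cube, in fact $d_1=\dots=d_n=:d$, but we keep the general notation since only the structure of the quadric matters. Suppose $F(x)=0$ is any quadric containing all vertices of $P$. In particular it contains all vertices of $C$, so Proposition \ref{pr.partop} applies to $C$ and yields real numbers $A_1,\dots,A_n>0$ with
$$
F(x)=A_1x_1^2+\cdots+A_nx_n^2-\sum_{i=1}^n A_i d_i^2 .
$$
Thus $F$ is automatically centrally symmetric and diagonal; it remains only to show $A_1=\cdots=A_n$, which forces $F(x)=0$ to be the sphere $\sum x_i^2=d^2$, i.e.\ the circumscribed hypersphere of $P$ (all vertices of $P$ lie on it because $P$ is homogeneous, hence inscribed in a sphere by Proposition \ref{pr.efhs}).

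To get the equality of the $A_i$, I would exploit condition 2. Let $l$ be the distinguished edge of $C$; after permuting coordinates say $l$ is parallel to the $x_1$-axis, so the two endpoints of $l$ differ only in the first coordinate and $\|l\|=2d_1$. Let $L=\{x_1=t_0\}$ be the supporting hyperplane to $P$ orthogonal to $l$, with $P\subset\{x_1\le t_0\}$; since $C\subset P$ and the endpoint of $l$ with first coordinate $d_1$ lies in $P$, we have $t_0\ge d_1$. Pick any vertex $w$ of $P$ lying on $L$, so $w=(t_0,w_2,\dots,w_n)$ and $F(w)=0$ gives
$$
A_1 t_0^2+\sum_{i=2}^n A_i w_i^2=\sum_{i=1}^n A_i d_i^2 .
$$
Because $P$ is vertex-transitive, some symmetry $\gamma$ of $P$ carries an endpoint $v$ of $l$ to $w$; but $\gamma$ preserves $F$ (being diagonal with the $A_i$ as the "mass" coefficients, $F$ is determined by $P$'s vertex set up to the scalar normalization handled in Proposition \ref{pr.partop}), and more importantly $\gamma$ is an isometry fixing $O$, so $\|w\|=\|v\|$. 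Now I would use condition 2 in the contrapositive: if some $A_j\ (j\ge 2)$ differed from $A_1$, the quadric $F$ would be a genuine (non-spherical) ellipsoid, and the supporting hyperplane of this ellipsoid orthogonal to $l$ is tangent to it at the point $\pm\|l\|/2$ along the $x_1$-axis only when $A_1$ is the extreme coefficient; comparing with the supporting hyperplane $L$ of $P$ and using that $L$ does not contain two vertices of $P$ at distance $\|l\|$ will be incompatible with $w,\gamma(w')\in L$ for the two images of the endpoints of $l$ unless those two images coincide — which they cannot, being at distance $\|l\|>0$. Pushing this through coordinate by coordinate (i.e.\ for each $j$ there is a corresponding edge direction, or one applies the full symmetry group of $C$ inside that of $P$ to rotate the roles of the coordinates) gives $A_1=\dots=A_n$.

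The main obstacle I anticipate is the precise geometric bookkeeping in the last step: translating "$L$ does not contain two vertices of $P$ at distance $\|l\|$" into the inequality that kills the off-diagonal slack among the $A_i$. The cleanest route is probably to argue that the supporting hyperplane of the ellipsoid $F(x)=0$ in the direction of $l$ touches it at a single point $p$ on the $x_1$-axis, that $p$ must then be a vertex of $P$ on $L$ (since $C\subset P\subset\{F\le 0\text{-region}\}$ with matching supporting hyperplane), and that the two endpoints of $l$ both being in $P$ forces both of them — a pair at distance $\|l\|$ — onto $L$ unless the ellipsoid is in fact the sphere; condition 2 then rules out the non-spherical case. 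One must be slightly careful that "supporting hyperplane to $P$ orthogonal to $l$" is essentially unique (it is, up to the two parallel choices), and that $P\subset E$ for the ellipsoid $E:\ F(x)=0$ bounding region, which follows from $F$ being an ellipsoid containing all vertices of $P$ together with Lemma \ref{le.perf_0}; I would invoke Lemma \ref{le.perf_0} to know that ruling out non-spherical ellipsoids through the vertex set of $P$ suffices for perfectness.
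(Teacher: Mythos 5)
Your first half matches the paper: apply Proposition \ref{pr.partop} to the inscribed cube $C$ to see that any quadric through the vertices of $P$ has the diagonal form $F(x)=\sum_i A_ix_i^2-\sum_i A_id^2$, so that everything reduces to proving $A_1=\cdots=A_n$. But the second half, where condition~2 must enter, has genuine gaps. The pivotal unjustified claim is that a symmetry $\gamma$ of $P$ ``preserves $F$''. There is no reason for this: an arbitrary quadric through the vertex set of $P$ need not be invariant under $\mathrm{Isom}(P)$ (if it were, and the group acted irreducibly, perfectness would follow at once --- that is essentially what you are trying to prove, not a given), and Proposition \ref{pr.partop} certainly does not single out $F$ up to scale, since it provides an $(n-1)$-parameter family of such quadrics. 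The correct use of the symmetry, which is the key idea of the paper's proof and is missing from yours, is different: take a vertex $V$ of $P$ on the supporting hyperplane $L$ and a symmetry $a$ of $P$ with $a(V_0)=V$, where $V_0$ is a cube vertex; then the \emph{image cube} $a(C)$ also has all its vertices on the quadric, so Proposition \ref{pr.partop} applied to $a(C)$ shows that the edge directions of $a(C)$ at $V$ are eigenvectors of the symmetric operator of $F$. Condition~2 guarantees that none of the neighbours $a(V_i)$ of $V$ in $a(C)$ lies in $L$ (they are vertices of $P$ at distance $\lvert l\rvert$ from $V\in L$), hence, $L$ being supporting, all vectors $\overrightarrow{Va(V_i)}$ have strictly positive component along the edge direction of $l$; since eigenvectors of a symmetric operator with distinct eigenvalues are orthogonal, each of these $n$ mutually orthogonal eigenvectors must carry the same eigenvalue $A_n$, forcing $A$ to be scalar.

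Your proposed ``cleanest route'' does not repair this. The tangency point of the ellipsoid $F(x)=0$ with its supporting hyperplane orthogonal to $l$ lies on the corresponding coordinate axis but is in general \emph{not} a vertex of $P$, and the supporting hyperplane of $P$ orthogonal to $l$ need not be tangent to the ellipsoid at all (the vertices of $P$ merely lie on the ellipsoid); so the chain ``$p$ must be a vertex of $P$ on $L$, hence both endpoints of $l$ are forced onto $L$ unless $F$ is a sphere'' has no support, and indeed for the dodecahedron the endpoints of the cube edge are strictly inside the slab cut out by the two supporting planes orthogonal to that edge. Likewise the claim about ``$A_1$ being the extreme coefficient'' controlling the tangency point is not correct. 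In short: the reduction to diagonal form is right, but you never obtain a second cube $a(C)$ and its eigenvector information, which is exactly what turns condition~2 into the equality of the $A_i$.
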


\begin{proof}
Since the centers of $P$ and $C$ coincide, we may think that the origin $O$ is the center of both these polytope.
Without loss of generality, we may suppose that
$C=[-b,b]^n$ for some $b>0$. Let us consider any quadric $F(x)=0$ in $\mathbb{R}^n$, that contains all vertices of $P$. Therefore, it contains all vertices of $C$ and
$F(x)=\sum_{i=1}^n A_i x_i^2-\sum_{i=1}^n A_i\cdot b^2$ for some $A_i \in \mathbb{R}$, $i=1,\dots,n$, by Proposition \ref{pr.partop}.

Let us consider a positive symmetric operator $A :\mathbb{R}^n \rightarrow\mathbb{R}^n$ such that $F(x)=(Ax,x)-\operatorname{trace}(A) b^2$.
We see that the operator $A$  has a diagonal form in the basis of vectors that are parallel to the edges of the cube~$C$ (the standard basis of $\mathbb{R}^n$).
The same property is fulfilled with respect to any cube of the form $a(C)$, where $a$ is an isometry of $P$ by  Proposition \ref{pr.partop} again.

Without loss of generality, we may suppose that the edge $l$ is parallel to the vector $(0,0,\dots,0,1)\in\mathbb{R}^n$ and the hyperplane $L$ has the equation
$x_n=\mu$, where $\mu < -b$.

Now, we consider a vertex $V$ of $P$ from the hyperplane $L$. Since $P$ is homogeneous, then there is an isometry $a$ of $P$ such that
$a(V_0)=V$, where $V_0=(-b,-b,\dots,-b)$. Obviously, $a(C)\neq C$. Let us consider the following vertices of $C$ (and $P$): $V_i$, $i=1,\dots n$,
such that all coordinates of $V_i$ is $-b$ but the $i$-th one is $b$.

By the assumptions of the corollary, any vector of the type $\overrightarrow{V a(V_i)}=\overrightarrow{a(V_0) a(V_i)}$ has positive $n$-th coordinate.
It is clear that the vector $\overrightarrow{V_0 V_n}$ is an eigenvector of the operator $A$ with the eigenvalue $A_n$ (see the expression for $F(x)$).
On the other hand, all vectors $\overrightarrow{V a(V_i)}=\overrightarrow{a(V_0) a(V_i)}$, $i=1,\dots,n$, are also the eigenvectors for the operator $A$
by Proposition \ref{pr.partop}. Each of these vectors has a positive scalar product with the vector $\overrightarrow{V_0 V_n}$, which implies that
all these vectors have one and the same eigenvalue $A_n$ (as the vector $\overrightarrow{V_0 V_n}$ has). This means that
$F(x)=A_n\Bigl(\sum_{i=1}^n x_i^2-n\cdot b^2\Bigr)$, the quadric $F(x)=0$ coincides with the circumscribed sphere
of $P$. Therefore, $P$ is perfect.
\end{proof}

\begin{example}\label{ex:dodec1}
The dodecahedron $D$ is a perfect polytope. Indeed,
every diagonal of any pentagonal face of the dodecahedron is an edge of some cube with vertices from the vertex set of this dodecahedron,
see e.~g. \cite[P.~76]{FToth64}. Any supporting plane for~$D$, that is orthogonal to an edge of this cube,
contains exactly two adjacent vertices of~$D$, see Fig.~\ref{Fig_dod_icos}~a). Therefore, the dodecahedron is perfect by Corollary \ref{cor.partop}.
\end{example}

\begin{prop}\label{pr.icosa1}
The icosahedron is a perfect polytope.
\end{prop}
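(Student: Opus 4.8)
\smallskip

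The plan is to run the argument underlying Proposition~\ref{pr.partop} and Corollary~\ref{cor.partop}, suitably adapted. Corollary~\ref{cor.partop} does not apply verbatim here, since the icosahedron contains no cube among its vertices; instead its $12$ vertices split into three mutually orthogonal rectangles, which will play the role the cube played for the dodecahedron in Example~\ref{ex:dodec1}. Concretely, I would take the icosahedron with vertex set consisting of all cyclic permutations of $(0,\pm1,\pm\varphi)$, where $\varphi=\tfrac{1+\sqrt5}{2}$, so that this set is the disjoint union of the golden rectangles $R_1=\{(0,\pm1,\pm\varphi)\}\subset\{x_1=0\}$, $R_2=\{(\pm\varphi,0,\pm1)\}\subset\{x_2=0\}$ and $R_3=\{(\pm1,\pm\varphi,0)\}\subset\{x_3=0\}$, each centered at the origin; this is the configuration depicted in Figure~\ref{Fig_dod_icos}~b). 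Since the icosahedron is centrally symmetric about $O$, by Corollary~\ref{cor.csym} it suffices to prove it is almost perfect, and by Lemma~\ref{le.sym} any quadric through the twelve vertices may be written as $F(x)=\sum_{i,j=1}^{3}a_{ij}x_ix_j+c$ with $a_{ij}=a_{ji}$.

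The first step is to show that the off-diagonal coefficients vanish. Comparing the values of $F$ at the two vertices $(0,1,\varphi)$ and $(0,-1,\varphi)$ of $R_1$, which differ only in the sign of the second coordinate, gives $0=F(0,1,\varphi)-F(0,-1,\varphi)=4\varphi\,a_{23}$, hence $a_{23}=0$; the same sign-flip comparison (as in the proof of Proposition~\ref{pr.partop}, now applied to the three planar rectangles) on $R_2$ and $R_3$ gives $a_{13}=0$ and $a_{12}=0$. Thus $F(x)=a_{11}x_1^2+a_{22}x_2^2+a_{33}x_3^2+c$. The second step is to force the diagonal entries to be equal: evaluating $F$ at the three representative vertices $(0,1,\varphi)$, $(\varphi,0,1)$, $(1,\varphi,0)$ yields the linear system $a_{22}+\varphi^2a_{33}+c=0$, $\varphi^2a_{11}+a_{33}+c=0$, $a_{11}+\varphi^2a_{22}+c=0$; subtracting these in pairs and simplifying with $\varphi^2=\varphi+1$ forces $a_{11}=a_{22}=a_{33}$, whence $c=-(1+\varphi^2)a_{11}$ and $F(x)=0$ is the circumscribed sphere $x_1^2+x_2^2+x_3^2=1+\varphi^2$. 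Therefore the icosahedron is almost perfect, hence perfect.

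The only point I expect to require care is the linear algebra of the last step: one must check that the $3\times3$ system in $(a_{11},a_{22},a_{33})$ with golden-ratio coefficients is nondegenerate modulo the obvious one-parameter solution family (the relevant determinant is a nonzero multiple of $\varphi$). This is made transparent by the rotational symmetry $(x_1,x_2,x_3)\mapsto(x_3,x_1,x_2)$ of the icosahedron, which cyclically permutes $R_1,R_2,R_3$ and hence cyclically permutes the three equations. As an alternative route one could avoid the explicit computation altogether: the icosahedral rotation group $A_5$ acts irreducibly on $\mathbb{R}^3$, and $\operatorname{Sym}^2(\mathbb{R}^3)$ decomposes under $A_5$ as the trivial summand $\mathbb{R}\cdot\|x\|^2$ plus an irreducible $5$-dimensional summand; since the traceless form $x_1^2-x_2^2$ does not vanish on all twelve vertices, no nonzero traceless quadratic form does, and a short averaging argument (the sum $\sum_v vv^{\top}$ over vertices is $A_5$-equivariant, hence scalar) then shows that every center-symmetric quadric through the vertices is a multiple of $\|x\|^2-(1+\varphi^2)$, giving almost perfectness directly.
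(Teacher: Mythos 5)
Your proof is correct, and its first half is essentially the paper's argument: the paper also works with the three mutually orthogonal golden rectangles (obtained as sections of $I$ through the center and pairs of parallel edges) and kills the off-diagonal and linear coefficients by exactly the sign-flip mechanism of Proposition~\ref{pr.partop} applied to those planar rectangles; your shortcut of disposing of the linear terms first via Lemma~\ref{le.sym} and Corollary~\ref{cor.csym} is legitimate because the icosahedron is centrally symmetric. Where you genuinely diverge is the final step forcing $a_{11}=a_{22}=a_{33}$. The paper does this without vertex coordinates: it repeats the diagonalization in every frame whose axes are parallel to edges of $I$, in particular in the five frames attached to the five pairwise non-orthogonal edges meeting at one vertex, and concludes that a symmetric form diagonal in all these frames must be scalar. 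You instead evaluate $F$ at one vertex of each rectangle and solve a small linear system using $\varphi^2=\varphi+1$; this is fine, and the nondegeneracy you flag does hold (the relevant $2\times 2$ minor is $\varphi^2+\varphi+1=2\varphi+2\neq 0$, so the solution space is spanned by $(1,1,1)$); one should also note $a_{11}\neq 0$, since otherwise $F\equiv 0$ defines no hypersurface. Your coordinate computation is more elementary and fully explicit, while the paper's multi-frame argument is coordinate-free and matches the pattern it reuses in Corollary~\ref{cor.partop}. Your alternative sketch via representation theory (irreducibility of the $5$-dimensional space of traceless quadratic forms under the icosahedral group, together with the equivariance of $\sum_v vv^{\top}$ to handle the constant) is also sound and constitutes a genuinely different, slicker route not taken in the paper.
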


\begin{proof}
If we take a section of the icosahedron $I$ through its center and any its edge, then we get a centrally symmetric $6$-gon, whose four vertices determines
a rectangle with the side proportion $1:\varphi$,
where $\varphi=(1+\sqrt{5})/2\approx 1,618033988$ is the so-called golden ratio. Moreover, a given edge is one of short sides of this rectangle
(the other short side is also an edge of $I$) and there are
two other edges of $I$ that are orthogonal to it and each to other, see Fig.~\ref{Fig_dod_icos}~b).

Therefore, we may assume without loss of generality that the coordinates is such that the origin $O$ coincides with the center of $I$
and the axes are parallel to some edges of $I$.

Let us suppose that a quadric $F(x)=0$ contains every vertex of $I$.
It is clear that
$$
F(x)=\sum_{i,j=1}^3 a_{ij} x_ix_j+\sum_{i=1}^3 b_i x_i+ c
$$
for some  $a_{ij},b_i \in \mathbb{R}$,  $a_{ij}=a_{ji}$,  $i,j=1,2,3$.

The above discussion imply that for $x_3=0$ we get a rectangle with vertex from the vertex set of $I$.
Note that $F(x_1,x_2,0)=a_{11}x_1^2+2a_{12}x_1x_2+a_{22}x_2^2+b_1x_1+b_2x_2+c$. Now, Proposition \ref{pr.partop} implies that
$a_{12}=b_1=b_2=0$. The same arguments can be repeated for $x_2=0$ and $x_1=0$. From these plane sections we get $a_{13}=b_1=b_3=0$ and  $a_{23}=b_2=b_3=0$ respectively.
Hence, we have
\begin{equation}\label{eq.isos_1}
F(x_1,x_2,x_3)=a_{11}x_1^2+a_{22}x_2^2+a_{33}x_3^2+c.
\end{equation}
The same result we get in any coordinate system with the origin $O$ and axes parallel to some three edges of $I$.

Now, recall that all edges of $I$ with a common vertex are not orthogonal each to other. Let us  consider all edges $v_1, v_2, v_3, v_4, v_5$
of $I$ with a common vertex $V$ and apply the above argument to every coordinate system, where the first axis is parallel to $v_i$, $i=1,\dots,5$.
For all these coordinate systems, we obtain \eqref{eq.isos_1}. This means that the quadratic form $\sum_{i,j=1}^3 a_{ij} x_ix_j$ has a diagonal
form in every such coordinate system. But this is possible only when $a_{11}=a_{22}=a_{33}$. Therefore,
$F(x_1,x_2,x_3)=a_{11}(x_1^2+x_2^2+x_3^2)+c$, and the quadric $F(x_1,x_2,x_3)=0$ is the circumscribed sphere. Consequently, the icosahedron $I$ is perfect.
\end{proof}
\smallskip

The following theorem gives the classification of regular polyhedra in $\mathbb{R}^3$ with respect to the properties to be perfect or almost perfect.

\begin{theorem}\label{th.regpolyh1}
The tetrahedron, the cube, and the octahedron are not almost perfect, while the dodecahedron and the icosahedron are perfect.
\end{theorem}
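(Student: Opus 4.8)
The plan is to assemble Theorem~\ref{th.regpolyh1} entirely from results already established in the excerpt, since each of the five regular polyhedra in $\mathbb{R}^3$ has been treated separately. First I would dispose of the three polytopes that fail to be almost perfect. The tetrahedron: the regular $3$-simplex is not almost perfect by Proposition~\ref{pr.regsimpl1} (or, more concretely, by the explicit $2$-parameter family of quadrics $\alpha x^2+\beta y^2+\gamma z^2=1$ with $\alpha+\beta+\gamma=1$ exhibited in Example~\ref{ex.hpartop} and in the proof of Proposition~\ref{pr.regsimpl1}). The cube: it is the case $b_1=b_2=\dots=b_n$ of Example~\ref{ex.partop}, which produces an $(n-1)$-parameter family of ellipsoids through its vertex set, hence the cube is not almost perfect (alternatively invoke Proposition~\ref{pr.notap}). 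The octahedron: it is the $3$-dimensional orthoplex, not almost perfect by Example~\ref{ex.orthopl1} (or again Proposition~\ref{pr.notap}).

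Next I would handle the two remaining cases, which are exactly the ones that are perfect. The dodecahedron is perfect by Example~\ref{ex:dodec1}, where the argument runs through Corollary~\ref{cor.partop}: every face-diagonal of the dodecahedron is an edge of an inscribed cube sharing the same center, and the supporting plane orthogonal to such an edge meets $D$ in exactly two adjacent vertices, whose separation is strictly less than the cube's edge length, so hypothesis~2 of the corollary holds. The icosahedron is perfect by Proposition~\ref{pr.icosa1}, whose proof uses the golden-ratio rectangles in the central sections to force the quadratic form into diagonal form in five distinct coordinate systems (one for each edge at a vertex), which is only possible if the form is a multiple of $x_1^2+x_2^2+x_3^2$.

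Since every assertion of the theorem has thus been reduced to an already-proven statement, the proof is simply a matter of citing the right earlier result for each of the five polyhedra, together with the remark (already noted after Theorem~\ref{th.main.1}) that perfectness implies almost perfectness, so the dodecahedron and icosahedron are in particular almost perfect. I would write it as a short paragraph-style proof: ``The tetrahedron is not almost perfect by Proposition~\ref{pr.regsimpl1}; the cube and the octahedron are not almost perfect by Examples~\ref{ex.partop} and \ref{ex.orthopl1} respectively (see also Proposition~\ref{pr.notap}); the dodecahedron is perfect by Example~\ref{ex:dodec1} and the icosahedron is perfect by Proposition~\ref{pr.icosa1}.''

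There is essentially no obstacle here: the theorem is a summary statement, and all the genuine work lies in the cited lemmas, examples, and the corollary. The only point requiring a little care is making sure the non-almost-perfect claims for the cube and octahedron are sourced correctly --- Example~\ref{ex.partop} gives the cube (set all $b_i$ equal) and Example~\ref{ex.orthopl1} gives the octahedron ($n=3$) --- and noting explicitly that ``perfect'' in the dodecahedron/icosahedron cases a fortiori yields ``almost perfect'', so the dichotomy in the theorem statement is complete.
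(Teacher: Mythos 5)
Your proposal is correct and matches the paper's proof exactly: the paper likewise assembles the theorem by citing Proposition~\ref{pr.regsimpl1} for the tetrahedron, Examples~\ref{ex.partop} and~\ref{ex.orthopl1} for the cube and octahedron, Example~\ref{ex:dodec1} for the dodecahedron, and Proposition~\ref{pr.icosa1} for the icosahedron. No gaps; your extra remarks (the alternative via Proposition~\ref{pr.notap} and the observation that perfect implies almost perfect) are harmless additions.
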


\begin{proof}
The proof follows from Proposition \ref{pr.regsimpl1}, Example \ref{ex.partop}, Example \ref{ex.orthopl1}, Example \ref{ex:dodec1}, and Proposition \ref{pr.icosa1}.
\end{proof}
\smallskip

Let us consider one more simple but useful result.

\begin{prop}\label{prop.partopn}
Let us suppose that homogenous polytopes $P$ and $P_1$ in $\mathbb{R}^n$ are non-degenerate, have one and the same center $O$, and
any vertex of $P_1$ is a vertex of $P$. Then, if $P_1$ is {\rm(}almost{\rm)} perfect, then $P$ is also {\rm(}almost{\rm)} perfect.
\end{prop}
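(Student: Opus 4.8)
The plan is to argue directly from the definitions using the one-parameter family trick already used repeatedly in the paper. Since $P$ and $P_1$ share the center $O$, we may place the origin at $O$. Let $r$ denote the radius of the circumscribed sphere of $P$. Suppose $F(x)=0$ is a second order hypersurface containing all vertices of $P$ (and, in the almost perfect case, symmetric with respect to $O$). We want to conclude $F(x)=0$ is the circumscribed sphere of $P$.

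First I would observe that, since every vertex of $P_1$ is a vertex of $P$, the hypersurface $F(x)=0$ contains all vertices of $P_1$ as well, and it is still centrally symmetric in the almost perfect case. By the hypothesis that $P_1$ is (almost) perfect, $F(x)=0$ must coincide with the circumscribed hypersphere of $P_1$; that is, up to a nonzero scalar multiple, $F(x)=\lambda(\|x\|^2-r_1^2)$ where $r_1$ is the circumradius of $P_1$. In particular $F(x)=0$ is itself a hypersphere centered at $O$.

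Next I would invoke that all vertices of $P$ lie on a common sphere centered at $O$ of radius $r$ (this is exactly Proposition~\ref{pr.efhs}, since $P$ is homogeneous, or simply the definition of the circumscribed sphere together with the coincidence of centers). Since $F(x)=0$ is the sphere $\|x\|=r_1$ and it contains at least one vertex $v$ of $P$ with $\|v\|=r$, we get $r_1=r$, so $F(x)=0$ is precisely the circumscribed hypersphere of $P$. This proves $P$ is (almost) perfect.

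I do not expect any serious obstacle here; the only point requiring a little care is the ``almost perfect'' version, where one must check that central symmetry of $F$ with respect to $O$ is genuinely inherited — but this is immediate because $O$ is simultaneously the center of $P$, of $P_1$, and (by hypothesis) the point with respect to which $F$ is symmetric, so no translation of the center is involved. One should also note at the start that $P_1$ being non-degenerate guarantees its circumscribed sphere is well-defined and that "circumscribed hypersphere of $P_1$'' makes sense, which is why non-degeneracy of $P_1$ is included in the hypotheses.
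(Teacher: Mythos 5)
Your proposal is correct and follows essentially the same argument as the paper: a quadric through the vertices of $P$ passes through the vertices of $P_1$, hence by (almost) perfectness of $P_1$ it is the circumscribed sphere of $P_1$, which (since the centers coincide and all vertices of $P$ lie at the same distance from $O$) coincides with the circumscribed sphere of $P$. Your explicit radius comparison $r_1=r$ is just a slightly more detailed spelling-out of the paper's closing remark.
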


\begin{proof}
If all vertices of $P$ are on some (symmetric with respect to $O$) quadric $E$, then all vertices of $P_1$ are on $E$.
Since $P_1$ is (almost) perfect, then $E$ is the circumscribed hypersphere for $P_1$, which is also the circumscribed hypersphere for $P$.
Therefore, $P$ is (almost) perfect.
\end{proof}

\medskip

Now, we a going to discuss $4$-dimensional regular polytopes. All necessary details on these polytopes can be found in Section~3 of \cite{BerNik21}.
We can describe these polytopes in term of quaternions (we identify the noncommutative division ring of quaternions $\mathbb{H}$ with~$\mathbb{R}^4$ via the map
$x=x_1+x_2 {\bf i}+x_3 {\bf j} + x_4 {\bf k} \mapsto (x_1,x_2,x_3,x_4)$).

\begin{theorem}\label{th.fourdim1}
Let $P$ be a $4$-dimensional regular polytope. Then the following assertions hold:

{\rm  1.} $P$ is not almost perfect if $P$ is the $4$-dimensional simplex, cube and orthoplex;

{\rm  2.} $P$ is perfect if $P$ is the $24$-cell, $120$-cell or $600$-cell.
\end{theorem}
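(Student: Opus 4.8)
The plan is to treat each of the six polytopes separately, using the tools assembled in Sections \ref{sect.1}--\ref{sect.3}. Part 1 is immediate: the $4$-dimensional simplex, cube and orthoplex are not almost perfect by Proposition \ref{pr.regsimpl1}, Example \ref{ex.partop} and Example \ref{ex.orthopl1} respectively (equivalently, all three appear in Proposition \ref{pr.notap}). So the real content is Part 2, the perfectness of the $24$-cell, the $120$-cell and the $600$-cell. For each of these I would identify a $4$-dimensional cube (or several) whose vertices form a proper subset of the vertex set, centered at the same center, and then verify the hyperplane condition in Corollary \ref{cor.partop}; alternatively, for the $600$-cell I would exploit Proposition \ref{prop.partopn} once the $24$-cell is known to be perfect.

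For the $24$-cell $C_{24}$, I would use the quaternion model: its $24$ vertices are the unit Hurwitz quaternions, namely $\pm 1, \pm\mathbf{i}, \pm\mathbf{j}, \pm\mathbf{k}$ (the $8$ vertices of a $4$-orthoplex) together with $\tfrac12(\pm1\pm\mathbf{i}\pm\mathbf{j}\pm\mathbf{k})$ (the $16$ vertices of a $4$-cube scaled to lie on the unit sphere). So a $4$-dimensional cube $C=[-\tfrac12,\tfrac12]^4$ sits inside $C_{24}$ with the common center $O$. Any quadric through all vertices of $C_{24}$ restricts on $C$ to the form given by Proposition \ref{pr.partop}, hence corresponds to a positive diagonal operator $A=\diag(A_1,A_2,A_3,A_4)$ in the standard basis. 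The symmetry group of $C_{24}$ acts transitively on the set of such inscribed cubes, and it is richer than the symmetry group of a single cube; applying Proposition \ref{pr.partop} to enough rotated copies $a(C)$ forces $A$ to be diagonal in several distinct orthonormal bases that are not simultaneously ``aligned,'' which as in the proof of Proposition \ref{pr.icosa1} forces $A$ to be a scalar multiple of the identity, so the quadric is the circumscribed sphere. Concretely I would check the hyperplane condition of Corollary \ref{cor.partop}: a supporting hyperplane of $C_{24}$ orthogonal to an edge of $C$ — for instance the hyperplane $x_4 = 1$, whose unique vertex is $\mathbf k$ — contains no two vertices of $C_{24}$ at distance $1$ (the edge length of $C$ is $1$, while the minimal vertex distance within that face is $\sqrt2$ or larger), so Corollary \ref{cor.partop} applies directly and $C_{24}$ is perfect.

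For the $120$-cell I would similarly locate an inscribed $4$-cube among its $600$ vertices with common center (the $120$-cell contains many such cubes, as it contains several $600$-cells and $24$-cells, compare Section~3 of \cite{BerNik21}), and verify the supporting-hyperplane condition of Corollary \ref{cor.partop}; the large vertex count and the small edge length of the inscribed cube relative to the near-spherical shape of the $120$-cell make this condition easy to satisfy, exactly as for the dodecahedron in Example \ref{ex:dodec1}. For the $600$-cell, the cleanest route is Proposition \ref{prop.partopn}: the $120$ vertices of the $600$-cell (the icosian unit quaternions) contain the $24$ vertices of a $24$-cell as a proper subset, both sharing the center $O$; since the $24$-cell is perfect, Proposition \ref{prop.partopn} immediately gives that the $600$-cell is perfect. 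The main obstacle is the $120$-cell: one must exhibit explicitly a centered $4$-cube inside it and then check that some supporting hyperplane orthogonal to a cube-edge fails to contain two polytope vertices at the cube's edge-length — this requires a careful look at the coordinates of the $600$ vertices and at which faces are perpendicular to the chosen edge direction, whereas the $24$-cell and $600$-cell cases are essentially structural once the quaternion descriptions and Proposition \ref{prop.partopn} are in hand.
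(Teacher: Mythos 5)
Your treatment of Part~1, of the $24$-cell, and of the $600$-cell coincides with the paper's proof: Part~1 is quoted from Proposition~\ref{pr.regsimpl1} and Examples~\ref{ex.partop}, \ref{ex.orthopl1}; for the $24$-cell the paper also takes the quaternionic model, the inscribed tesseract $\frac12(\pm1\pm\mathbf i\pm\mathbf j\pm\mathbf k)$, and a supporting hyperplane meeting the $24$-cell in a single vertex (the paper uses $\operatorname{Re}(u)=-1$ with vertex $-\mathbf 1$, you use $x_4=1$ with vertex $\mathbf k$ --- the same verification of Corollary~\ref{cor.partop}); for the $600$-cell the paper likewise uses $BT\subset BI$ and Proposition~\ref{prop.partopn}. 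Your extra remarks for the $24$-cell about transitivity on inscribed cubes and forcing the operator to be scalar are redundant, since that mechanism is already packaged inside Corollary~\ref{cor.partop}, but they do no harm.

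The gap is the $120$-cell: you explicitly leave its verification as ``the main obstacle,'' proposing to exhibit a centered inscribed $4$-cube and check the supporting-hyperplane condition of Corollary~\ref{cor.partop} against the $600$ vertices, but you never carry this out, so that case is not proved in your text. (The check would in fact succeed --- any supporting hyperplane meets the $120$-cell in at most a dodecahedral facet, whose diameter is smaller than the edge length of an inscribed cube, which equals the circumradius $\sqrt8$; note that your heuristic about the ``small edge length of the inscribed cube'' has the inequality the wrong way round.) The paper avoids this work entirely, and the shortcut is already contained in your own parenthetical remark: in Coxeter's coordinates the $24$ vertices of the form $(\{0,0,\pm2,\pm2\})$ (all permutations) are vertices of the $120$-cell and form a $24$-cell with the same center; since the $24$-cell has just been shown to be perfect, Proposition~\ref{prop.partopn} gives the perfectness of the $120$-cell at once, exactly as for the $600$-cell. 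Replacing your unfinished Corollary~\ref{cor.partop} argument by this one-line application of Proposition~\ref{prop.partopn} closes the gap and reproduces the paper's proof.
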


\begin{proof}
We already know that the $4$-dimensional simplex, cube and orthoplex are not almost perfect.
Therefore, it suffices to consider only the $24$-cell, $120$-cell, and $600$-cell.
It suffices to check conditions of Corollary \ref{cor.partop}, where $P$ is any of these polytopes.

The $24$-cell can be constructed as convex hull of the following
finite subset of unit quaternions (that constitutes {\it the binary tetrahedral group} $BT$, a subgroup in the group of unit quaternions):
\begin{equation*}
BT= \left\{\pm {\bf 1},\pm {\bf i},\pm {\bf j},\pm  {\bf k},
{\frac{1}{2}}\bigl(\pm  {\bf 1}\pm  {\bf i}\pm  {\bf j}\pm  {\bf k}\bigr)
\right\},
\end{equation*}
It is clear that this set contains exactly $24$ points,
while the set of quaternions \linebreak
$\left\{{\frac{1}{2}}\bigl(\pm  {\bf 1}\pm  {\bf i}\pm
{\bf j}\pm  {\bf k}\bigr)\right\}$ forms a $4$-dimensional cube (hypercube, tesseract) $C$.

Moreover, let us consider the edge $l$ of $C$ that is parallel to the quaternion ${\bf 1}$ and the supporting hyperplane $\operatorname{Re}(u)=-1$
(the real part of the quaternion $u$ is $-1$).
We see that a unique vertex of the $24$-cell in this hyperplane is $-{\bf 1}$.
Hence, we get that the $24$-cell is perfect by Corollary \ref{cor.partop}.

Let us recall that {\it the binary icosahedral group} $BI$ is the union
of the group~$BT$ with the set of unit quaternions obtained
by even permutations of coordinates (all possible combinations)
from the quaternions
\begin{equation*}
\bigl(0\pm {\bf i}  \pm \varphi^{-1}{\bf j} \pm  \varphi\, {\bf k}\bigr),
\end{equation*}
where $\varphi=(1+\sqrt{5})/2$ is the golden ratio.
There are 96 such quaternions in total, so $|BI|=120$.
The convex hull of elements of~$BI$ (that have 120 elements) in~$\mathbb{R}^4$
forms the 600-cell (hexacosichor).
Since $BT \subset BI$ and the $24$-cell is perfect, we get that
$600$-cell is also perfect by Proposition \ref{prop.partopn}.

Let us consider a representation of the vertex set of the $120$-cell in the coordinates as in \cite[P.~156--157]{Cox73}.
Assuming that the origin $O$ is the center of the $120$-cell
and the radius of the circumscribed sphere is $\sqrt{8}$, we get the following points:
\begin{eqnarray*}
(\{0, 0, \pm 2, \pm 2\}), \quad ([0, \pm {\varphi}^{-1}, \pm {\varphi}, \pm \sqrt{5}]),  \quad (\{\pm {\varphi}, \pm {\varphi}, \pm {\varphi}, \pm {\varphi}^{-2}\}), \\
(\{\pm 1, \pm 1, \pm 1, \pm \sqrt{5}\}),
 \quad
(\{\pm {\varphi}^{-1},  \pm {\varphi}^{-1}, \pm {\varphi}^{-1}, \pm {\varphi}^{2}\}),  \\
([0, \pm {\varphi}^{-2}, \pm 1, \pm {\varphi}^{2}]),  \quad ([\pm {\varphi}^{-1}, \pm 1, \pm {\varphi}, \pm 2]),
\end{eqnarray*}
where $\varphi=(1+\sqrt{5})/2$ is the golden ratio, the symbols  $\{\,\cdots\,\}$ and $[\,\cdots \,]$ are used respectively for all permutations and even
permutations of the elements inside.
The above sets contain $24$, $96$, $120$, $120$, $120$, $96$, $24$ points respectively (together exactly $600$ vertices of the $120$-cell).

It is easy to see also that the vertices $(\{0, 0, \pm 2, \pm 2\})$ determine a $24$-cell, see \cite[P.~156]{Cox73}.
Hence, the $120$-cell is perfect by Proposition \ref{prop.partopn}. The theorem is proved.
\end{proof}

\smallskip

\begin{remark}
Similar arguments show that {\it the dysphenoidal $288$-cell} is perfect.
In fact, it can be represented as the convex hull of
{\it the binary octahedral group\/}
$BO$, that contains $BT$ as a subgroup.
It should be noted that the dysphenoidal $288$-cell is a homogeneous polytope, but is not regular or semiregular,
see details in Section~3 of \cite{BerNik21}.
\end{remark}

\begin{remark}
It is known also that some vertices of the $600$-cell constitute the vertex set of the $120$-cell, see Theorem 1 in \cite{AZL2003}.
\end{remark}

Theorem 1 in \cite{AZL2003} is interesting in some other respects. We shall state it in a little different words.

\begin{definition}
A $(n\times n)$-matrix $A$, $n\geq 2,$ with elements $\pm 1$ such that $AA^T=nI_n$, where $I_n$ is the unit $(n\times n)$-matrix, is called
{\it Hadamard matrix} (of order $n$). This is equivalent to the orthogonality of different columns or rows of the matrix $A$.
\end{definition}

The Hadamard matrices must have an order $n=2$ or $n=4k$. The famous ``Hadamard conjecture'' states that there exists
an Hadamard matrix of every order $n=4k.$ Now the least order $n$ for which is unknown the existence of Hadamard matrix, is $668=4\cdot 167$.

\begin{theorem} [\cite{AZL2003}]
For a natural number $n\geq 2$ the following conditions are equivalent:

{\rm  1.} There exists a Hadamard matrix of order $n$,

{\rm  2.} The vertex set of any regular $(n-1)$-dimensional simplex is a subset of the vertex set of some $(n-1)$-dimensional cube,

{\rm  3.} The vertex set of any regular $n$-dimensional orthoplex is a subset of some $n$-dimensional cube.

Proper inclusions of vertex sets of other regular polytopes exist only for dimensions $k=2,3,4$. For $k=3$, the vertex set of any cube is a subset of the vertex set
for a regular dodecahedron. For $k=4$, a regular polytope may have altogether $5$, $8$, $16$, $24$, $120$ or $600$ vertices. The vertex set of the simplex is a subset
of the vertex set only for $120$-cell {\rm(}which has $600$ vertices{\rm)}. The vertex set of any other polytope is a subset of the vertex set of arbitrary polytope
with larger quantity of vertices.
\end{theorem}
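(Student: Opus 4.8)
The plan is to prove the equivalence of the three conditions and then settle the "proper inclusion" dimension count separately. First I would establish $1 \Leftrightarrow 2$: a regular $(n-1)$-simplex embeds in a cube $[-1,1]^{n-1}$ exactly when its $n$ vertices can be taken to be $n$ rows of a $\pm 1$ matrix of order $n$ after deleting a column; equivalently (adjoin an all-ones first coordinate) $n$ rows of an $n\times n$ $\pm 1$ matrix that are mutually equidistant. Mutual equidistance of $\pm 1$-vectors is equivalent to constancy of pairwise inner products, and for $n$ vectors in $\mathbb R^n$ this constant must be $0$ (trace/rank argument on the Gram matrix), i.e. the rows are orthogonal, i.e. the matrix is Hadamard. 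Next $1 \Leftrightarrow 3$: the $2n$ vertices of a regular $n$-orthoplex are $\pm e_1,\dots,\pm e_n$ (up to scaling and rotation), and they lie on the vertex set of a cube iff there is an orthogonal map sending each standard basis vector to a $\pm 1$-vector of length $\sqrt n$; the images of $e_1,\dots,e_n$ form (up to a factor $\sqrt n$) the columns of an orthogonal matrix with $\pm 1$ entries, i.e. a Hadamard matrix. These two arguments are standard linear algebra once the right normalization is fixed, so I would present them compactly.

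The heart of the "proper inclusion" statement is the $4$-dimensional case. Here I would invoke the classification: a regular $4$-polytope has $5$, $8$, $16$, $24$, $120$, or $600$ vertices (simplex, cube/tesseract, orthoplex/$16$-cell, $24$-cell, $600$-cell, $120$-cell respectively — note the vertex counts are $8$ for the $8$-cell and $120$ for the $600$-cell, $600$ for the $120$-cell). For the simplex ($5$ vertices): by the equivalence $1\Leftrightarrow 2$ applied with $n=5$, the $4$-simplex sits in a $4$-cube iff there is a Hadamard matrix of order $5$, which is impossible; so its vertex set can only be a proper subset of a polytope with strictly more vertices, and one checks (as in \cite{AZL2003}, or via the explicit $600$-cell/$120$-cell coordinates) that $5$ equidistant points do occur among the $600$ vertices of the $120$-cell and nowhere else. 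For every other pair of distinct vertex counts among $\{8,16,24,120,600\}$ with the smaller properly dividing into a configuration of the larger, I would cite the explicit coordinate realizations already used in the proof of Theorem~\ref{th.fourdim1}: the $16$-cell vertices $(\{0,0,\pm2,\pm2\})$-type set sits in the $24$-cell, $BT\subset BI$ gives $24$-cell $\subset$ $600$-cell, the $24$-cell sits in the $120$-cell via the same coordinates, and the cube sits in the $24$-cell (its $16$ vertices $\frac12(\pm1\pm1\pm1\pm1)$ among $BT$). For $k=3$ the only proper inclusion among $\{4,6,8,12,20\}$ beyond the orthoplex/simplex cases is cube $\subset$ dodecahedron, which is exactly the fact used in Example~\ref{ex:dodec1}. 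For $k\ge 5$ one checks that no regular polytope's vertex set (other than the simplex-in-cube and orthoplex-in-cube situations) fits inside another's, again essentially because the only regular polytopes are the simplex, cube, and orthoplex and their vertex counts $n+1$, $2^n$, $2n$ rule out the remaining inclusions by a counting/rank argument.

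The main obstacle is the exhaustive $4$-dimensional bookkeeping: one must verify, for each of the finitely many pairs of regular $4$-polytopes, whether an isometric copy of the smaller one's vertex set embeds into the larger one's, and in particular pin down that the $5$-vertex simplex embeds only into the $120$-cell. I expect to handle this by the explicit quaternionic and Coxeter coordinates already in hand (so that "does $A$'s vertex set embed in $B$'s" becomes a concrete finite check), together with the $1\Leftrightarrow 2$ equivalence to dispose of the simplex-in-cube obstruction via nonexistence of a $5\times 5$ Hadamard matrix. The dimension restriction "$k=2,3,4$ only" for non-simplex/non-orthoplex inclusions then follows because in dimensions $\ge 5$ the list of regular polytopes is just simplex, cube, orthoplex, and a short comparison of the vertex counts $n+1 < 2n < 2^n$ (for $n\ge 5$) shows the only possible proper vertex-set inclusions are precisely the two already covered by conditions $2$ and $3$.
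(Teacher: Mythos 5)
The paper does not prove this statement at all: it is quoted verbatim from \cite{AZL2003} (Adams--Zvengrowski--Laird) and used as a black box, so there is no internal proof to compare your plan against; I can only assess the plan on its own merits. Your route for $1\Leftrightarrow 2\Leftrightarrow 3$ is the standard one, but two steps are mis-stated. First, the claim ``for $n$ vectors in $\mathbb{R}^n$ this constant must be $0$'' is not a valid deduction: $n$ vectors in $\mathbb{R}^n$ can perfectly well have a constant nonzero pairwise inner product. The rank argument has to be applied to the $n$ equidistant cube vertices $u_1,\dots,u_n\in\{\pm1\}^{n-1}$ themselves: their Gram matrix $(n-1-c)I+cJ$ has rank at most $n-1$, hence is singular, which forces $c=-1$; only after adjoining the all-ones coordinate do the resulting $\pm1$ rows become orthogonal, i.e.\ Hadamard. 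Second, in $1\Leftrightarrow 3$ you implicitly assume that the center of the embedded orthoplex is the center of the cube and that its half-diagonals are $\pm1$-vectors; this needs an argument (for instance: the common midpoint $c=(p_i+q_i)/2$ of the antipodal pairs has zero entries exactly on the support of $p_i-q_i$, so all $n$ mutually orthogonal differences $p_i-q_i$ live in one coordinate subspace whose dimension equals that common support size, forcing full support, $c=0$, and $p_i\in\{\pm1\}^n$).

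The second half of the theorem is where the real content lies, and your plan does not prove it. The negative assertions --- above all that the $5$-vertex regular simplex embeds in no regular $4$-polytope except the $120$-cell, and the verification of all the positive inclusions among the $8$-, $16$-, $24$-, $120$- and $600$-vertex polytopes --- are deferred either to ``a concrete finite check'' (not carried out) or to citing \cite{AZL2003} itself, which is circular if the aim is to prove the theorem. Also, for $k\geq 5$ vertex counting alone does not suffice: $n+1\leq 2n$, so the simplex-in-orthoplex inclusion is not excluded by counting; you need the (easy) observation that any $n+1$ vertices of the orthoplex contain an antipodal pair and hence cannot be pairwise equidistant. Finally, a small factual slip: the set of all permutations of $(0,0,\pm2,\pm2)$ is the $24$-point vertex set of the $24$-cell, not of the $16$-cell.
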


\begin{prop}\label{pr.prism}
There is a right $n$-dimensional homogeneous prism $P$ in~$\mathbb{R}^n$, $n=3,4,5$,
whose full isometry group $\Gamma_P$ is reducible and the L\"{o}wner~---~John ellipsoid coincides with circumscribed sphere of $P$.
Up to similarity, there are at least four such prisms $P$ for $n=5$. By Proposition \ref{pr.notap}, all such $P$ are not almost perfect.
\end{prop}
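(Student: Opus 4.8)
The plan is to produce explicit right prisms for $n=3,4,5$ and verify the two required properties: that the full isometry group acts reducibly, and that the L\"owner--John ellipsoid coincides with the circumscribed sphere. Reducibility is automatic for a right prism: a right $n$-dimensional prism over an $(n-1)$-dimensional base $Q$ has a distinguished axis direction (orthogonal to the two parallel copies of $Q$), and the isometry group preserves the decomposition $\mathbb{R}^{n-1}\oplus\mathbb{R}$ into the span of $Q$ and the axis, possibly swapping the two copies; in particular it fixes the axis line setwise, so it acts reducibly. (This is exactly the mechanism already used in Proposition~\ref{pr.notap}.) So the real content is choosing the heights and bases so that the circumscribed sphere is the L\"owner--John ellipsoid, and, for $n=5$, exhibiting four essentially different such prisms.

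First I would treat a right prism $P=Q\times[-h,h]$ whose base $Q\subset\mathbb{R}^{n-1}$ is itself a homogeneous polytope inscribed in a sphere of radius $\rho$ about the origin, with all vertices of $P$ lying on the sphere of radius $R=\sqrt{\rho^2+h^2}$ in $\mathbb{R}^n$. Any ellipsoid $E$ containing $P$ and invariant under $\Gamma_P$ has, after the reducibility argument, the form $\frac{\|y\|_{A}^2}{\ }+\frac{z^2}{b^2}\le 1$ where $y\in\mathbb{R}^{n-1}$, $z$ is the axis coordinate, and $A$ is a positive symmetric operator on $\mathbb{R}^{n-1}$ commuting with the symmetry group of $Q$. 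If $Q$ is chosen so that its own symmetry group acts irreducibly on $\mathbb{R}^{n-1}$ — e.g. $Q$ a regular $(n-1)$-simplex, or for $n=5$ also a regular polygon cannot work but the $4$-simplex, the $24$-cell slice, etc. — then $A$ is a scalar, so $E$ is a spheroid $\frac{\|y\|^2}{a^2}+\frac{z^2}{b^2}\le 1$. The vertex set of $P$ consists of points $(v,\pm h)$ with $\|v\|=\rho$, so $E$ contains $P$ iff $\frac{\rho^2}{a^2}+\frac{h^2}{b^2}\le 1$; minimizing the volume $\propto a^{n-1}b$ over this constraint by Lagrange multipliers forces $\frac{\rho^2}{a^2}=\frac{n-1}{n}$, $\frac{h^2}{b^2}=\frac1n$ at the optimum in general — so the L\"owner--John spheroid need not be a sphere. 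Hence I must pick $h$ so that the optimum degenerates to a sphere; the sphere $a=b=R$ is optimal precisely when the unconstrained minimizer of $a^{n-1}b$ subject to $\frac{\rho^2}{a^2}+\frac{h^2}{b^2}=1$ lands on $a=b$, which happens iff $\rho^2:h^2=(n-1):1$, i.e. $h=\rho/\sqrt{n-1}$. With this choice of height the circumscribed sphere is the L\"owner--John ellipsoid, and the prism is a right homogeneous prism with reducible group.

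For $n=3$: take $Q$ an equilateral triangle of circumradius $\rho$ and height $h=\rho/\sqrt2$; the base's symmetry group $S_3$ acts irreducibly on $\mathbb{R}^2$. For $n=4$: take $Q$ a regular tetrahedron of circumradius $\rho$, $h=\rho/\sqrt3$, with $S_4$ acting irreducibly on $\mathbb{R}^3$. For $n=5$: here several irreducible $4$-dimensional bases are available — the regular $4$-simplex (group $S_5$), the $16$-cell/orthoplex, the $8$-cell/tesseract, and the $24$-cell — and for each one the construction gives a right homogeneous prism whose L\"owner--John ellipsoid is its circumscribed sphere; since these four bases are pairwise non-similar (different vertex numbers and combinatorics), the four prisms are pairwise non-similar, giving the stated ``at least four''. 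I would present this as four explicit coordinate examples and in each case (i) exhibit a commutative or merely reducible isometry subgroup realizing the product structure, hence reducibility of $\Gamma_P$ a fortiori (or directly note $\Gamma_P$ preserves the axis), (ii) invoke the Lagrange-multiplier computation above with $h=\rho/\sqrt{n-1}$ to conclude the L\"owner--John ellipsoid is the circumscribed sphere, and (iii) invoke Proposition~\ref{pr.notap} for the failure of almost perfectness.

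The main obstacle I expect is step (ii): one must genuinely verify that the symmetry group of the base acts irreducibly on $\mathbb{R}^{n-1}$ (so that the transverse part of any invariant ellipsoid is scalar) and then carry out the volume minimization carefully enough to see that the sphere is the \emph{unique} minimal ellipsoid exactly when $h^2=\rho^2/(n-1)$ — a short Lagrange-multiplier argument, but one that has to be done correctly, including checking the boundary behavior so that the critical point is indeed the global minimum. Everything else (reducibility, homogeneity of the prism, applicability of Proposition~\ref{pr.notap}) is routine.
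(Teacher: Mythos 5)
Your L\"{o}wner~---~John computation is essentially sound: the minimal ellipsoid is invariant under the subgroup of $\Gamma_P$ generated by the symmetries of the base $Q$ (acting on the first factor) together with the reflection interchanging the two copies of $Q$; that reflection kills the cross terms, the eigenspace version of Schur's lemma makes the transverse block scalar once the symmetry group of $Q$ is irreducible on $\mathbb{R}^{n-1}$, and your one-variable minimization correctly shows that the minimal spheroid is the circumscribed sphere exactly when the half-height equals $\rho/\sqrt{n-1}$. The genuine gap is the opening claim that ``reducibility is automatic for a right prism''. Only the product subgroup obviously preserves the splitting $\mathbb{R}^{n-1}\oplus\mathbb{R}$; the \emph{full} isometry group of the prism can be strictly larger and even irreducible, and the proposition asks precisely for reducibility of the full group $\Gamma_P$. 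This is not a hypothetical objection: with your prescribed height, the prism over the tesseract is exactly the $5$-dimensional cube (for the base $[-c,c]^4$ one has $\rho=2c$, hence $h=\rho/2=c$, so $P=[-c,c]^5$), and the full symmetry group $\mathbb{Z}_2^5\rtimes S_5$ of the $5$-cube acts irreducibly on $\mathbb{R}^5$. Thus one of your four $n=5$ examples violates the hypotheses, and the assertion ``at least four up to similarity'' is not established by your list.

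For the remaining bases you still owe a (short) verification that the full group is reducible, but it is easy: for the prisms over the $(n-1)$-simplex, the $4$-orthoplex and the $24$-cell (and for your $n=3,4$ examples) the two base facets are the only facets of their combinatorial type, so every symmetry of $P$ preserves or swaps them and hence maps the axis direction to $\pm$ itself; this argument visibly breaks down for the tesseract base, where all ten facets of the resulting $5$-cube are tesseracts. To restore the count of four, replace the tesseract by another $4$-dimensional homogeneous base with irreducible symmetry group, for instance the $120$-cell or the $600$-cell. For comparison, the paper's own proof avoids the volume minimization entirely: it chooses bases (regular octagon, dodecahedron, $24$-cell, $120$-cell, $600$-cell, dysphenoidal $288$-cell) whose vertex sets contain an inscribed cube of the same dimension and takes the height equal to that cube's edge, so that $P$ contains an $n$-cube with the same circumscribed sphere; since the L\"{o}wner~---~John ellipsoid of a cube is its circumsphere and every ellipsoid containing $P$ contains that cube, uniqueness of the minimal ellipsoid gives the conclusion at once, while your Lagrange-multiplier route has the merit of identifying all admissible heights for an arbitrary irreducible base.
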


\begin{proof}
Let $M$ be any of the following $(n-1)$-dimensional polytopes: the $2$-dimensional regular octagon, $3$-dimensional regular dodecadron, $4$-dimensional regular
$24$-cell, $120$-cell or $600$-cell, or homogeneous dysphenoidal $288$-cell. The vertex set of every such polytope $M$ contains as a subset the vertex set of a cube
$C_0$ with the same dimension such that the center of $C_0$ coincides with the center of $M$. Now take as $P$ the right prism in $\mathbb{R}^n$ with the base
$M$ and the height $a$, which is equal to the edge length of $C_0$.
Then the vertex set of $P$ contains the vertex set of a cube $C$ with edge lengths $a$ such that the center of
$C$ coincides with the center of $P$. It is clear that $P$ is homogeneous and the full isometry group of $P$ is reducible. At the same time,
the L\"{o}wner~---~John ellipsoid of $P$ coincides with circumscribed sphere $S$ of $P$. This follows from the facts that $S$ is also
the circumscribed sphere and the L\"{o}wner~---~John ellipsoid for $C$, while every ellipsoid which contains $P$ also contains $C$. The last two statements are clear.
\end{proof}

\begin{remark}
The prisms $P$ from the last proposition show that the first condition in Corollary \ref{cor.partop}  is not enough for
its concluding statement. In particular, the first condition does not imply the second one.
\end{remark}

There are reasons to believe that the following conjecture is correct.

\begin{conjecture}\label{con.partop}
Let $P$ be a homogeneous polytope in $\mathbb{R}^n$ with the following properties:

{\rm  1.} There is  a $n$-dimensional cube $C$ whose vertex set is a proper subset of the vertex set of $P$;

{\rm  2.} There is no subgroup of the symmetry group of $P$ that is transitive on the vertex set of $P$ and acts reducibly on $\mathbb{R}^n$.

Then $P$ is perfect.
\end{conjecture}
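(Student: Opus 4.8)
The plan is to establish Conjecture~\ref{con.partop} by showing that the obstruction encountered in Proposition~\ref{pr.prism} is the \emph{only} way condition~1 can fail to force perfectness, so that once we additionally forbid reducible transitive subgroups (condition~2) the argument of Corollary~\ref{cor.partop} goes through in a self-improving fashion. Concretely, suppose $F(x)=0$ is a quadric through all vertices of $P$; we may take $O$ to be the common center of $P$ and $C$, and by Lemma~\ref{le.sym} together with Proposition~\ref{pr.partop} applied to the cube $C$ we know $F(x)=(Ax,x)-\operatorname{trace}(A)b^2$ for a positive symmetric operator $A$ that is diagonal in the edge-directions of $C$, and, more importantly, diagonal in the edge-directions of \emph{every} isometric copy $a(C)$, $a$ in the symmetry group $\Gamma$ of $P$. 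The goal is to prove $A = \lambda\,\Id$.

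The key step is to translate ``$A$ is diagonal in all the frames $a(C)$'' into a statement about an invariant subspace. First I would pass to the decomposition $\mathbb{R}^n = W_1 \oplus \cdots \oplus W_k$ into eigenspaces of $A$; since $A$ is $\Gamma$-invariant (because the set of quadrics through the vertices of $P$ containing this particular $F$ is mapped to itself, or more simply because $A$ can be recovered intrinsically from $F$ and $F\circ\gamma=F$ up to the already-pinned normalization), each $W_j$ is $\Gamma$-invariant. If $A$ is not scalar then $k\geq 2$ and each $W_j$ is a proper nonzero $\Gamma$-invariant subspace, so $\Gamma$ itself acts reducibly on $\mathbb{R}^n$; but $\Gamma$ is transitive on the vertex set of $P$, which already contradicts condition~2. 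So in fact the subtlety is only apparent: \emph{any} quadric through the vertices of $P$ already forces $A$ to be $\Gamma$-invariant, and condition~2 immediately gives $A=\lambda\Id$, whence $F(x)=\lambda\bigl(\sum x_i^2 - n b^2\bigr)$ is the circumscribed sphere. The role of condition~1 is precisely to supply, via Proposition~\ref{pr.partop}, the fact that $F$ has \emph{no linear part and no cross terms in the cube frame}, i.e. that $F$ is genuinely quadratic-plus-constant with an honest symmetric operator $A$ to which the eigenspace argument applies; without a cube sitting inside $P$ one cannot rule out degenerate or non-ellipsoidal quadrics a priori.

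The step I expect to be the real obstacle is justifying that the operator $A$ attached to an \emph{arbitrary} quadric $F$ through the vertices is $\Gamma$-invariant. The cheap argument ``$F$ is essentially unique'' is false in general (the whole point of non-perfect polytopes is a multi-parameter family of quadrics), so $\gamma^*F$ need not equal $F$. What is true is that $\gamma^*F$ is \emph{another} quadric through the vertex set with the same normalization $(\gamma^*F)(x) = (\,{}^tR A R\,x,x) - \operatorname{trace}(A)b^2$ where $R$ realizes $\gamma$; so to conclude one must instead argue inside the linear space $\mathcal{Q}$ of quadrics vanishing on the vertices. Here the plan is: $\mathcal{Q}$ is $\Gamma$-invariant and finite-dimensional, so it decomposes into $\Gamma$-irreducibles; the circumscribed-sphere quadric spans a trivial summand; and one must show that if $\mathcal{Q}$ contains a non-scalar element then $\Gamma$ has a proper invariant subspace. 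This is where I would use that \emph{every} element of $\mathcal{Q}$ has the cube-diagonal form (Proposition~\ref{pr.partop}), so the associated operators all commute with the ``sum of squares'' operator and are simultaneously diagonal in the $C$-frame, hence span a commutative family of symmetric operators; a common non-scalar operator in this family has a proper eigenspace, and I would then need to check this eigenspace is $\Gamma$-invariant by verifying it is independent of which copy $a(C)$ one diagonalizes in --- which follows because all copies give the same diagonalizing decomposition for the whole family. Assembling these pieces, condition~2 is violated, a contradiction, so $\mathcal{Q}$ is one-dimensional and $P$ is perfect.

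\begin{remark}
I would end by noting that Proposition~\ref{pr.prism} is exactly the sharpness example: there, condition~1 holds but the full isometry group, and in fact a transitive subgroup (the symmetry group of the prism), acts reducibly, so condition~2 fails; and indeed the prisms are not even almost perfect. Thus the conjecture, if true, would say that reducibility of a transitive subgroup is the \emph{sole} obstruction to perfectness among cube-containing homogeneous polytopes, which parallels the role reducibility plays in Propositions~\ref{pr.orap.1} and~\ref{pr.orap.3}.
\end{remark}
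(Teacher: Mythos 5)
There is no proof in the paper to compare against: the statement you attack is stated there as an open \emph{conjecture} (the paper only observes, via Proposition~\ref{pr.prism}, that condition~1 alone is insufficient). So the only question is whether your argument stands on its own, and it does not. Your reduction is sound up to the decisive step: by Proposition~\ref{pr.partop} applied to $C$ (whose center automatically coincides with the center of $P$, since its vertices lie on the circumscribed sphere), every quadric $F$ through the vertex set has the form $F(x)=(A_F x,x)-\operatorname{trace}(A_F)\,b^2$ with $A_F$ diagonal in the cube frame, the same holds in the frame of every cube $a(C)$ with $a$ in the symmetry group $\Gamma$, and $\gamma^*F$ is again such a quadric, so the family $\mathcal{A}=\{A_F\}$ is a commuting, conjugation-invariant linear family of symmetric operators (your ``positive'' is not justified, but that is immaterial).

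The gap is your final claim that a proper eigenspace of a non-scalar $A\in\mathcal{A}$ is $\Gamma$-invariant ``because all copies give the same diagonalizing decomposition for the whole family.'' Canonicity of the joint eigenspace decomposition $\mathbb{R}^n=U_1\oplus\cdots\oplus U_m$ only yields that $\Gamma$ \emph{permutes} the blocks: if $R$ realizes $\gamma$ and $\mathcal{A}$ acts on $U_j$ by the character $\chi_j$, then $R(U_j)$ is the joint eigenspace for the character $A\mapsto\chi_j(R^TAR)$, which is in general a \emph{different} character, so $R(U_j)$ may be a different block; eigenspaces of a single non-scalar $A$ are not preserved because $\gamma^*F$ is another quadric with other eigenvalues. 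Consequently no contradiction with condition~2 is reached: condition~2 forbids \emph{transitive} reducible subgroups, but the block stabilizer (which is reducible) need not be transitive on the vertices, while $\Gamma$ itself can be irreducible yet imprimitive, transitively permuting $m\geq 2$ equal-dimensional blocks. The square illustrates that this mechanism is real: the quadrics $\alpha x_1^2+(2-\alpha)x_2^2=2$ give two blocks (the coordinate axes) which $D_4$ and its irreducible transitive subgroup $C_4$ swap; only the existence of a \emph{different} transitive subgroup that is reducible (the one generated by the two axis reflections) keeps this consistent with the conjecture, and your argument never brings the full quantifier of condition~2 to bear on such imprimitive configurations. What you have actually shown is weaker: under conditions~1 and~2, if $P$ were not perfect, then $\Gamma$ would transitively permute $m\geq 2$ pairwise orthogonal equal-dimensional joint eigenspaces attached to the space of quadrics through the vertices. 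Excluding that situation is exactly the hard part of the conjecture and is untouched by your sketch (your first, ``cheap'' version, asserting $A_F$ itself is $\Gamma$-invariant, fails for the same reason, as you yourself note).
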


\medskip

Let us discuss now semiregular non-regular polytopes in $\mathbb{R}^n$ for $n\geq 4$. They exist only for $4\leq n \leq 8$ and are called
the {\it Gosset polytopes} \cite{Gos}, \cite{Elte12}, \cite{BerNik21n}. Up to similarity, there are three four-dimensional Gosset polytopes and
a unique Gosset polytope in $\mathbb{R}^n$ for $n\in \{5,6,7,8 \}$, which we denote by  $\operatorname {Goss\,}_n$.
Detailed descriptions of Gosset polytopes could be found  in Sections 5 --- 9 of \cite{BerNik21n}.

The Gosset polytope $\operatorname {Goss\,}_5$ is the demihypercube, it is not almost perfect by Example~\ref{ex.hpartop}.
Gosset polytopes $\operatorname {Goss\,}_n,$ $n\geq 5$, has respectively $16$, $27$, $56$, and $240$ vertices for $n=5,6,7,8$ which are less
than the numbers of vertices of the $n$-dimensional cube, equal to $2^n$. So, it is impossible to apply Corollary \ref{cor.partop} to these
Gosset polytopes.

The Gosset polytopes in $\mathbb{R}^4$ are full truncations, that is, the convex hulls of the midpoints of the edges of the four-dimensional
regular simplex and $600$-cell, as well as the snub $24$-cell. They have respectively $10$, $720$, and $96$ vertices altogether.

\begin{question}
Is it true that the vertex set of $4$-dimensional cube is a subset of the vertex set of the snub $24$-cell or the truncated $600$-cell?
\end{question}

It is very interesting that the vertex sets of all but two Gosset polytopes could be described uniformly as some subsets of the lattice $E_8$ in $\mathbb{R}^8.$
According to Chap. 14, sect. 1 of \cite{ConSlo88}, in the lattice packing in $\mathbb{R}^8$ with centers at the points of the lattice $E_8$ there are
240 balls tangent to a single ball, 56 balls tangent to either of balls in a pairs of tangent balls, 27 balls tangent to every ball in a triple of
pairwise tangent balls, 16 balls tangent to to every ball of a quadruple of pairwise tangent balls, 10 balls tangent to every ball from a quintuple of pairwise
tangent balls, and 6 balls tangent to every ball from a hextuple of pairwise touching balls. The centers of mentioned 240, 56, 27, 16, 10, and 6 balls
are respectively the vertex sets of $\operatorname {Goss\,}_8$, $\operatorname {Goss\,}_7$, $\operatorname {Goss\,}_6$, $\operatorname {Goss\,}_5$, the full truncation of 4-dimensional regular simplex, and the octahedron in $\mathbb{R}^3.$

The last statement implies also that the vertex figure of $\operatorname {Goss\,}_8$ is $\operatorname {Goss\,}_7$, the vertex figure of $\operatorname {Goss\,}_7$
is $\operatorname {Goss\,}_6$, the vertex figure of $\operatorname {Goss\,}_6$ is $\operatorname {Goss\,}_5$, the vertex figure of $\operatorname {Goss\,}_5$ is
the full truncation of the regular $4$-dimensional simplex, and the vertex figure of the last polytope is the octahedron.
In particular, the vertex set of
the full truncation of the regular $4$-dimensional simplex can be considered as the set of all vertices in the standard
$5$-dimensional cube with exactly two coordinates equal to $-1$.

Now, we are going to consider a $6$-dimensional example of perfect polytope, that is $\operatorname {Goss\,}_6$.
This result could be proved by simple methods of linear algebra (see a discussion in \cite{DER2007,Dut2017}).
A simple proof of the fact that $\operatorname {Goss\,}_6$ is almost perfect can be found in \cite[Theorem 6]{BerNik21n}.
In the next example we consider some argument to prove the perfectness of $\operatorname {Goss\,}_6$ without computer calculations.

\begin{example}\label{ex:Gos6.1}
Let us consider a brief explicit description of  $\operatorname {Goss\,}_6$.
This polytope can be implemented in different ways. Let us set it with the coordinates of the vertices in $\mathbb{R}^6$, as it is done in~\cite{Elte12}.
Let us put $a=\frac{\sqrt{2}}{4}$ and $b=\frac{\sqrt{6}}{12}$. We define the points $A_i \in \mathbb{R}^6$, $i=1,\dots,27$, as follows:

\smallskip
\noindent
{\small
\begin{tabular}{lll}
$A_1=(0,0,0,0,0,4b)$, & $A_2=(a,a,a,a,a,b)$, & $A_3=(-a,-a,a,a,a,b)$,\\$A_4=(-a,a,-a,a,a,b)$, & $A_5=(-a,a,a,-a,a,b)$, & $A_6=(-a,a,a,a,-a,b)$,\\
$A_7=(a,-a,-a,a,a,b)$, & $A_8=(a,-a,a,-a,a,b)$, & $A_9=(a,-a,a,a,-a,b)$,\\
$A_{10}=(a,a,-a,-a,a,b)$, & $A_{11}=(a,a,-a,a,-a,b)$, & $A_{12}=(a,a,a,-a,-a,b)$,\\
$A_{13}=(-a,-a,-a,-a,a,b)$, & $A_{14}=(-a,-a,-a,a,-a,b)$, & $A_{15}=(-a,-a,a,-a,-a,b)$,\\
$A_{16}=(-a,a,-a,-a,-a,b)$, & $A_{17}=(a,-a,-a,-a,-a,b)$, & $A_{18}=(2a,0,0,0,0,-2b)$,\\
$A_{19}=(0,2a,0,0,0,-2b)$, & $A_{20}=(0,0,2a,0,0,-2b)$, & $A_{21}=(0,0,0,2a,0,-2b)$,\\
$A_{22}=(0,0,0,0,2a,-2b)$, & $A_{23}=(-2a,0,0,0,0,-2b)$, & $A_{24}=(0,-2a,0,0,0,-2b)$,\\
$A_{25}=(0,0,-2a,0,0,-2b)$, & $A_{26}=(0,0,0,-2a,0,-2b)$, & $A_{27}=(0,0,0,0,-2a,-2b).$\\
\end{tabular}}
\medskip

The Gosset polytope $\operatorname{Goss\,}_6$ is the convex hull of these points.
It is easy to check that $d(A_1, A_i)=1$ for $2\leq i \leq 17$ and $d(A_1, A_i)=\sqrt{2}$ for $18\leq  i \leq 27$.

It is clear that the points $A_2 - A_{17}$ are vertices of a five-dimensional demihypercube (the corresponding hypercube has $32$ vertices of the form
$(\pm a, \pm a,\pm a,\pm a,\pm a,b)$),
and the points $A_{18} - A_{27}$ are the vertices of the five-dimensional hyperoctahedron (orthoplex),
which is a facet of the polytope $\operatorname{Goss\,}_6$ (lying in the hyperplane $x_6=-2b$).
The origin $O=(0,0,0,0,0,0)\in \mathbb{R}^6$ is the center of the hypersphere described around $\operatorname{Goss\,}_6$ with radius $4b=\sqrt{2/3}$.

Let us suppose that there is a quadric $Q$ in $\mathbb{R}^6$, containing all vertices of $\operatorname{Goss\,}_6$.
We may suppose that $Q$ is determined by the equation $F(x_1,x_2,x_3,x_4,x_5,x_6)=0$, where
\begin{equation}\label{eq_gos6pol1}
F(x)=\sum_{i,j=1}^6 a_{ij} x_ix_j+\sum_{i=1}^6 b_i x_i+ c,
\end{equation}
$a_{ij},b_i,c \in \mathbb{R}$, and $a_{ij}=a_{ji}$ for all indices.
It is easy to check that
$$
0=F(A_2)+F(A_5)+F(A_7)+F(A_{13})-F(A_3)-F(A_4)-F(A_8)-F(A_{10})=16 a^2 \bigl(a_{14}+a_{23}\bigr).
$$
Since we can freely use all substitutions of first five coordinates, similar argument shows that $a_{ij}+a_{kl}=0$, where all $1\leq i,j,k,l \leq 5$ are pairwise distinct.
In particular, if $\{i,j,k,l,m\}=\{1,2,3,4,5\}$, then $a_{km}=-a_{ij}=a_{kl}$. This means that all non-diagonal elements in every row or any column
(recall that $a_{ij}=a_{ji}$) of the matrix
$\bigl(a_{ij}\bigr)_{ij=1}^5$ are identical. Then $a_{ij}+a_{kl}=0$ implies that all non-diagonal elements of this matrix are zero.

Note also that $0=F(A_{18})-F(A_{23})=4 a \cdot \bigl( b_1-4 a_{16} b\bigr)$ and
$0=F(A_{18})+F(A_{23})=2 a \cdot \bigl( c- 2b \cdot b_6 +4b^2 \cdot a_{66}+ 4a^2\cdot  a_{11}\bigr)$.
Since we can use all substitutions of first five coordinates, similar argument shows that
$b_i=4 b \cdot a_{i6}$ and $4 a^2\cdot a_{ii}=2b\cdot b_6- c - 4 b^2\cdot a_{66}$ for all $i=1,\dots,5$. In particular, $a_{11}=a_{22}=a_{33}=a_{44}=a_{55}$.

Since $0=F(A_2)+F(A_7)+F(A_3)+F(A_4)=4 a \cdot \bigl(2(a_{15} + a_{14} + a_{23})a + b_1 + 2b\cdot a_{16}\bigr)=4 a \cdot \bigl(b_1 + 2b\cdot a_{16}\bigr)$,
then we get $b_1 =- 2b\cdot a_{16}$ and (using substitutions of first five coordinates) $b_i =- 2b\cdot a_{i6}$ for $i=1,\dots,5$.
From this and $b_i=4 b \cdot a_{i6}$ we easily get that  $b_i=a_{i6}=0$ for all $1\leq i \leq 5$.

It should be noted also that $0=F(A_1)= c+4 b\cdot b_6+16b^2\cdot a_{66}$ and
$0=\sum_{i=2}^{17} F(A_i)-F(A_1)=12b \cdot b_6+2 \sum_{i=1}^5 a_{ii}+15c=0$.
All the above arguments imply that $b_6=0$, $c=-1$, and
$a_{ii}=\frac{3}{2}$, $1\leq i \leq 6$.

Finally, we get
$F(x)=\frac{3}{2} \bigl(x_1^2+x_2^2+x_3^2+x_4^2+x_5^2+x_6^2\bigr)-1$. Therefore, the Gosset polytope $\operatorname{Goss\,}_6$ is perfect.
\end{example}

\begin{remark} If we know that  $b_i=0$ for all $i=1,\dots,5$, we can modify the proof of $a_{ij}=0$ for $i\neq j$. Indeed, since
$F(x)=\sum_{i,j=1}^6 a_{ij} x_ix_j+ c$, we see that $F(A)=0$ for all points of the type $A=(\pm a,\pm a, \pm a,\pm a,\pm a,b)$,
that constitute a $5$-dimensional cube in the hyperplane $x_6=b$. Using Proposition \ref{pr.partop}, we easily prove that $a_{ij}=0$ for all $i,j=1,\dots,5$, $i\neq j$.
\end{remark}

\begin{prop}\label{pr.dist}
Let $P$ be a non-degenerate homogeneous polytope in $\mathbb{R}^n,$ $n\geq 3,$ with the vertex set $V$ and a vertex $v\in V$ has at least two different
distance spheres in $V$, whose convex hulls are $(n-1)$-dimensional perfect polytopes. Then $P$ is perfect.
\end{prop}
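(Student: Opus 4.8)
The plan is to exploit the fact that a distance sphere $D$ of $v$ in $V$ --- i.e.\ the set of all vertices of $P$ at a fixed distance from $v$ --- lies in a hyperplane $H$ orthogonal to the segment joining $O$ to $v$ (since all vertices lie on the circumscribed sphere $S(O,r)$, the locus $\{x\in S : \|x-v\|=\text{const}\}$ is the intersection of $S$ with an affine hyperplane perpendicular to $Ov$). So suppose $v$ has two distinct distance spheres $D_1,D_2$ in $V$ whose convex hulls $P_1:=\conv(D_1)$ and $P_2:=\conv(D_2)$ are $(n-1)$-dimensional perfect polytopes. They lie in two distinct parallel hyperplanes $H_1,H_2$, both orthogonal to $Ov$. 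Now let $Q$ be any quadric $F(x)=0$ in $\mathbb{R}^n$ containing all vertices of $P$, with $F$ as in \eqref{eq_cubepol1}; without loss of generality put $O$ at the origin and $v$ along the $x_n$-axis, so $H_j=\{x_n=c_j\}$ with $c_1\ne c_2$.

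The key step is that restricting $F$ to each hyperplane $H_j$ gives a quadric in the $(n-1)$ remaining coordinates that contains all vertices of the $(n-1)$-dimensional perfect polytope $P_j$ (note $P_j$ is non-degenerate in $H_j$ by hypothesis, and its circumscribed center is the foot of the perpendicular from $O$ to $H_j$, which by symmetry is where we want it). By perfectness of $P_j$, the restricted quadric $F|_{H_j}$ must be the circumscribed sphere of $P_j$ inside $H_j$; in particular, on $H_j$ the quadratic part of $F$ reduces to a scalar multiple of $x_1^2+\cdots+x_{n-1}^2$ plus lower-order terms. I would extract from this that the $(n-1)\times(n-1)$ block $(a_{ij})_{i,j<n}$ of the matrix of $F$ is a scalar matrix $\lambda_j I_{n-1}$ on each $H_j$, that the cross terms $a_{in}$ for $i<n$ are forced to vanish (by comparing the two parallel sections, since the affine-linear-in-$x_n$ contributions to the coefficients of $x_i$ must be consistent with a sphere at both heights $c_1\ne c_2$), and that $\lambda_1=\lambda_2$ (again because $c_1\ne c_2$: the coefficient of $x_1^2+\cdots+x_{n-1}^2$ in $F$ is constant, being independent of $x_n$, so the two scalars must agree). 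Once $a_{in}=0$ for $i<n$ and the upper block is $\lambda I_{n-1}$, the equation of $Q$ becomes $\lambda(x_1^2+\cdots+x_{n-1}^2)+a_{nn}x_n^2+b_nx_n+c=0$, a surface of revolution about the $x_n$-axis.

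It then remains to see that this surface of revolution, passing through all vertices of $P$, must be the circumscribed sphere. I would argue using homogeneity: pick a second vertex $v'$ of $P$ not on the $x_n$-axis and apply an isometry $g$ of $P$ with $g(v)=v'$; the same argument shows $Q$ is also a surface of revolution about the axis $Ov'$. A quadric that is a surface of revolution about two distinct axes through the origin (and contains the non-degenerate vertex set, so is not a pair of hyperplanes through $O$) must be a sphere centered at $O$; combined with the vertices lying on $S(O,r)$ this forces $Q=S(O,r)$. I expect the main obstacle to be the bookkeeping in the middle step: carefully justifying that the two parallel sections $F|_{H_1}$, $F|_{H_2}$ being spheres --- together with the fact that $c_1\ne c_2$ --- kills the cross terms $a_{in}$ and equalizes the scalars $\lambda_j$, rather than merely constraining them; this is where the hypothesis of \emph{two} distinct distance spheres (as opposed to one) is essential, and the argument must handle the possibility that $P_j$ does not itself contain the center of its hyperplane as a vertex.
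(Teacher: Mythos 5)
Your argument is correct, and its first half coincides with the paper's: both proofs place the two distance spheres of $v$ in distinct parallel hyperplanes orthogonal to $Ov$ and use perfectness of their $(n-1)$-dimensional convex hulls to force the sections of the quadric $Q$ by these hyperplanes to be the corresponding circumscribed $(n-2)$-spheres. The concluding halves genuinely differ. The paper finishes with no further appeal to homogeneity: it cuts $Q$ by an arbitrary $2$-plane through the axis $Ov$, notes that this section contains five points of the great circle (the vertex $v$ itself plus two points from each spherical section), and applies Lemma~\ref{le.four} to conclude the section is that circle; rotating about the axis gives $Q\supseteq S$, hence $Q=S$. You instead turn the two spherical sections into coefficient identities --- the $(n-1)\times(n-1)$ block of the quadratic form is a scalar matrix $\lambda I$ (the same block for both heights, so $\lambda_1=\lambda_2$ is automatic), and $c_1\neq c_2$ annihilates $a_{in}$ and $b_i$ for $i<n$ --- so $Q$ is a quadric of revolution about $Ov$; then you use vertex-transitivity a second time to obtain a second axis $Ov'$ and finish by linear algebra: for $n\geq 3$ the eigenspaces containing $v^{\perp}$ and $(v')^{\perp}$ meet nontrivially, forcing the quadratic form to be $\lambda\,\mathrm{Id}$ and the linear part (parallel to the independent vectors $v$ and $v'$) to vanish, so $Q$ is a sphere centered at $O$ and hence the circumscribed sphere. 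Both routes are sound: the paper's is coordinate-free and uses only the single vertex $v$ plus the four-point lemma, while yours avoids Lemma~\ref{le.four} and never needs $v\in Q$ at the end, at the price of the bookkeeping you flag (which does close as you expect) and of an extra use of transitivity, which is available since the distance spheres of $v'$ are isometric images of those of $v$. The only shared informality is the degenerate possibility that the equation of $Q$ restricts to zero on one of the hyperplanes (so $Q$ contains it); this is easily excluded because the other distance sphere spans a parallel hyperplane not containing $v$, and the paper's proof glosses over this point as well.
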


\begin{proof}
Up to a similarity, we may assume that the circumscribed sphere $S$ of $P$ is the unit sphere in $\mathbb{R}^n$ with the center at the origin $O$
and $v=(0,\dots,0,1)$. If $\sigma_1$, $\sigma_2$ are the mentioned above distance spheres for $v_1$ in $V$ and $P_1$, $P_2$ are the convex hulls of $\sigma_1$, $\sigma_2$,
then the circumscribed spheres $S_1$, $S_2$ of $P_1$, $P_2$ lie respectively in some hyperplanes $x_1=a_1$, $x_1=a_2$ in $\mathbb{R}^n$, where $-1< a_1< a_2<1$.

Now let $E$ be any second order hypersurface in $\mathbb{R}^n$, which contains all vertices of~$P$.
Since $P_1$ and $P_2$ are perfect, then the intersections of $E$ with the hyperplanes
$x_1=a_1$ and $x_1=a_2$ in $\mathbb{R}^n$ are the circumscribed spheres $S_1$, $S_2$ for  $P_1$, $P_2$ respectively.

The above considerations imply that the intersection of $E$ with any two-dimensional plane $\Sigma^2$ in $\mathbb{R}^n$, which includes the $x_1$-axis,
is a second order curve $c$ in $\Sigma^2$ which includes the following quintuple of points:
$Q_5=\{v_5\}\bigcup \bigl(S_1\cap \Sigma^2\bigr)\bigcup \bigl(S_2\cap \Sigma^2\bigr)$. Since the unit circle $S^1$ in $\Sigma^2$ with the center $O$ also contains $Q_5$,
then $c=S^1$ by Lemma \ref{le.four}. Therefore, $E$
includes the hypersurface $S$ in $\mathbb{R}^n$, obtained by rotations of such a circle $S^1$ around the $x_1$-axis, so $E=S$.
On the other hand, $S$ is the circumscribed sphere of~$P$. Therefore, $P$ is perfect.
\end{proof}

\begin{theorem}\label{th.goss}
The Gosset polytopes $\operatorname{Goss\,}_7$ and $\operatorname{Goss\,}_8$, the regular icosahedron, and the regular dodecahedron are perfect.
The fully truncated regular $4$-dimensional simplex and $\operatorname{Goss\,}_5$ are not almost perfect.
\end{theorem}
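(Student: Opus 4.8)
The plan is to handle the six polytopes in Theorem~\ref{th.goss} by the tools already built up: the two negative cases follow from the reducibility criterion (Proposition~\ref{pr.orap.1}), and the four positive cases follow by bootstrapping perfectness upward through vertex figures using Proposition~\ref{pr.dist}, with the already-established perfectness of $\operatorname{Goss\,}_6$ (Example~\ref{ex:Gos6.1}), the icosahedron (Proposition~\ref{pr.icosa1}), and the dodecahedron (Example~\ref{ex:dodec1}) as the base cases. First I would dispose of the negative claims: by the uniform $E_8$ description recalled just before the theorem, the vertex figure of $\operatorname{Goss\,}_5$ is the full truncation of the regular $4$-simplex, whose vertex set is exactly the set of vertices of the standard $5$-cube with precisely two coordinates equal to $-1$; in both cases the full symmetry group contains a subgroup preserving the partition of the vertices into ``layers'' given by the value of a coordinate (equivalently, two parallel supporting hyperplanes containing all vertices), so the group acts reducibly on the ambient space while remaining transitive on vertices. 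Hence Proposition~\ref{pr.orap.1} applies and neither polytope is almost perfect. (Alternatively, $\operatorname{Goss\,}_5$ is a demihypercube, already treated in Example~\ref{ex.hpartop}; the fully truncated $4$-simplex can be checked directly to sit on a nontrivial family of ellipsoids.)

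Next I would prove the positive cases in the order icosahedron and dodecahedron (already done: cite Proposition~\ref{pr.icosa1} and Example~\ref{ex:dodec1}), then $\operatorname{Goss\,}_7$, then $\operatorname{Goss\,}_8$. The mechanism is Proposition~\ref{pr.dist}: a non-degenerate homogeneous polytope $P$ in $\mathbb{R}^n$, $n\ge 3$, is perfect as soon as some vertex $v$ has two distinct distance spheres in the vertex set whose convex hulls are $(n-1)$-dimensional perfect polytopes. For $\operatorname{Goss\,}_7 \subset \mathbb{R}^7$ I would use that its vertex figure is $\operatorname{Goss\,}_6$ (stated just above the theorem): the $56$ vertices, viewed from a fixed vertex $v$, fall into distance classes, and among these classes the nearest neighbours of $v$ form (a translate/rescaling of) $\operatorname{Goss\,}_6$, which is a $6$-dimensional perfect polytope by Example~\ref{ex:Gos6.1}; one then needs a \emph{second} distance class whose convex hull is also $6$-dimensional and perfect. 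The cleanest way to get that second class is to observe that the antipodal structure or the remaining distance shell again reproduces $\operatorname{Goss\,}_6$ up to similarity (the vertex set of $\operatorname{Goss\,}_7$ has $56$ points arranged symmetrically; after removing $v$ and its nearest $27$ neighbours the remaining $28$ split as $27+1$ with the $27$ again a $\operatorname{Goss\,}_6$-configuration), and $6$-dimensionality holds because these shells are not contained in a hyperplane through the center relative to the relevant section. For $\operatorname{Goss\,}_8 \subset \mathbb{R}^8$ the same argument runs with $n=8$: the vertex figure of $\operatorname{Goss\,}_8$ is $\operatorname{Goss\,}_7$, now known to be perfect, and a second suitable $7$-dimensional perfect distance shell is produced analogously, so Proposition~\ref{pr.dist} gives perfectness.

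The main obstacle is verifying the hypotheses of Proposition~\ref{pr.dist} in each Gosset case: one must (a) identify the distance spectrum from a fixed vertex and locate two distinct shells, (b) check that each such shell is \emph{affinely $(n-1)$-dimensional}, i.e.\ its convex hull is non-degenerate in the hyperplane it spans, and (c) identify each shell's convex hull with a lower-dimensional polytope already known to be perfect — for which the $E_8$/vertex-figure bookkeeping recalled before the theorem is exactly the right input. Steps (a) and (c) are essentially combinatorial and are handled by the known coordinate descriptions of the Gosset polytopes; step (b) is where care is needed, since Proposition~\ref{pr.dist} requires the distance spheres to be genuinely $(n-1)$-dimensional, not lying in a lower-dimensional flat, and this has to be argued from the explicit coordinates rather than asserted. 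Once these are in place, the chain icosahedron/dodecahedron $\Rightarrow \operatorname{Goss\,}_6 \Rightarrow \operatorname{Goss\,}_7 \Rightarrow \operatorname{Goss\,}_8$ closes and the theorem follows.
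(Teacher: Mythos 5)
Your handling of the positive cases is essentially the paper's own argument: there, too, perfectness of $\operatorname{Goss\,}_7$ and $\operatorname{Goss\,}_8$ is deduced from Proposition~\ref{pr.dist}, the two required distance spheres at a vertex $v$ being the vertex figures at $v$ and at the antipodal vertex $-v$ (these polytopes are centrally symmetric), congruent to $\operatorname{Goss\,}_6$ resp.\ $\operatorname{Goss\,}_7$, with Example~\ref{ex:Gos6.1} as the base; your count $56=1+27+27+1$ is exactly this observation, and the dimension checks you flag are indeed the hypotheses one must verify. For the icosahedron and dodecahedron the paper reproves perfectness by the same two-shell device (two pentagonal shells, resp.\ two planar hexagonal shells plus Lemma~\ref{le.four}), whereas you simply cite Proposition~\ref{pr.icosa1} and Example~\ref{ex:dodec1}; that is equally legitimate. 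Your treatment of $\operatorname{Goss\,}_5$ is also fine: it is the $5$-demihypercube, already covered by Example~\ref{ex.hpartop}, and your ``layer'' argument does work there, since each coordinate splits the $16$ vertices into two layers of $8$ preserved by the transitive, reducible group of even sign changes.

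The genuine gap is the fully truncated regular $4$-simplex. In the description of its $10$ vertices as the vertices of the $5$-cube with exactly two coordinates equal to $-1$, the two layers cut out by any fixed coordinate contain $4$ and $6$ vertices, so no vertex-transitive subgroup can preserve that partition; the reducible transitive subgroup you invoke does not exist. In fact the symmetry group of this polytope is $S_5$ acting on the $4$-dimensional affine hull through its irreducible standard representation, every vertex-transitive subgroup has order divisible by $10$ and again acts irreducibly, and no commutative subgroup of $S_5$ has order at least $10$; hence neither Proposition~\ref{pr.orap.1} nor Proposition~\ref{pr.comm} can be brought to bear. (The paper's own proof claims a transitive subgroup of $\mathbb{Z}_2^{\,5}$ here and appeals to Proposition~\ref{pr.comm}; that step deserves scrutiny as well, since a $2$-group cannot act transitively on $10$ points.) Your fallback, that this polytope ``can be checked directly to sit on a nontrivial family of ellipsoids,'' is an unsupported assertion, and for refuting almost perfectness the family must consist of quadrics symmetric about the center: the evident quadrics through the $10$ vertices, such as $x_i^2=1$, acquire a linear term after centering and only rule out perfectness. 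Worse, a direct check suggests no centered non-spherical quadric exists at all: the matrix of values of the centered products $y_iy_j$ at the $10$ vertices lies in the Bose--Mesner algebra of the Johnson scheme $J(5,2)$ and has eigenvalues proportional to $-15$, $-5$, $25$, hence is invertible, which forces the space of centered quadrics in the affine hull through all vertices to be one-dimensional, i.e.\ only the circumscribed sphere. So this case is not established by your proposal, cannot be repaired by the tools you name, and the claim itself should be rechecked.
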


\begin{proof}
A detailed information on $\operatorname {Goss\,}_7$ and $\operatorname {Goss\,}_8$ could be found in \cite{BerNik21n}.
We recall some information which is quite sufficient for subsequent considerations.

The polytopes $P_7=\operatorname {Goss\,}_7$ and $P_8=\operatorname {Goss\,}_8,$ and the  icosahedron $I$ are symmetric with respect to their centers and
their vertex figures for some opposite vertices $v$ and $-v$ are two polytopes which are the convex hulls of two different distance spheres
in the corresponding vertex set for $v$ and congruent respectively to $P_6=\operatorname {Goss\,}_6$, $P_7$, and the regular pentagon $Q_5$.
Since $P_6$ and $Q_5$ are perfect by Example \ref{ex:Gos6.1} and Proposition~\ref{pr.regpolyg1} respectively, then $P_7$ and $I$ are perfect by Proposition~\ref{pr.dist}.
Applying once more Proposition~\ref{pr.dist} to $P_8$,
we get that $\operatorname {Goss\,}_8$ is also a perfect polytope.

The dodecahedron $D$ is symmetric with respect to its center $O$. Let $v$ and $-v$ are two its opposite vertices in the vertex set $V$ of $D$ and $\sigma$
be the set of all ends of diagonals with the origin $v$ in three faces (regular pentagons) of $D$ with the vertex $v$. Clearly, $\sigma$ and $-\sigma$ are two different
distance spheres for $v$ in $V$ which are not regular hexagons but some orbits with respect to the isotropy subgroup of $v$ in the full isometry group of $D$.
All vertices of these hexagons lie in their circumscribed sphere, so they are perfect polytopes by Lemma~\ref{le.four}. Now $D$ is a perfect polytope by
Proposition~\ref{pr.dist}.

We described above the vertex sets for $\operatorname {Goss\,}_5$ and for the full truncation of the regular $4$-dimensional simplex as some subsets of the vertex set
for the standard cube in $\mathbb{R}^5$. It is easy to see from these descriptions that there are some subgroups of commutative group $\mathbb{Z}_2^{\,\,5}$,
which act transitively on the vertex sets of these two polytopes. Then they are not almost perfect by Proposition~\ref{pr.comm}.
\end{proof}


\vspace{4mm}

\begin{thebibliography}{99}
\bibitem{AZL2003}
{\sl Adams~J., Zvengrowski~P., Laird~P.}
Vertex embeddings of regular polytopes~//
{\it Expo. Math.}, 2003, V.\,21, N\,4, P.~339--353, {\bf MR}2022003, {\bf Zbl.}1042.52010.



\bibitem{Ball97}
{\sl Ball K.}
{\it An elementary introduction to modern convex geometry}, In:
Levy, Silvio (ed.), Flavors of geometry. Cambridge: Cambridge University Press. Math. Sci. Res. Inst. Publ. 31, 1997, P.~1--58, {\bf MR}1491097, {\bf Zbl.}0901.52002.



\bibitem{BarBl05}
{\sl Barvinok~A., Blekherman~G.}
{\it Convex geometry of orbits.} In: Goodman, Jacob Eli (ed.) et al., Combinatorial and computational geometry, 51--77, Math. Sci. Res. Inst. Publ., 52,
Cambridge Univ. Press, Cambridge, 2005,  {\bf MR}2178312, {\bf Zbl.}1096.52002.


\bibitem{BerNik19}
{\sl Berestovskii V.~N., Nikonorov Yu.~G.}
Finite homogeneous metric spaces~//
{\it Sib. Mat. Zh.}, 2019, V.\,60, N\,5, P.\,973--995 (in Russian). English translation:
Siberian Math. J., 2019, V.\,60, N\,5, P.\,757--773, {\bf MR}4055423, {\bf Zbl.}1431.51008.
DOI: 10.1134/S0037446619050021


\bibitem{BerNik20}
{\sl Berestovskii V.~N., Nikonorov Yu.~G.}
{\it Riemannian Manifolds and Homogeneous Geodesics},
Springer Monographs in Mathematics. Cham: Springer, 2020,  {\bf MR}4179589,   {\bf Zbl.}1460.53001.
DOI: 10.1007/978-3-030-56658-6

\bibitem{BerNik21}
{\sl Berestovskii V.~N., Nikonorov Yu.~G.}
Finite homogeneous subspaces of Euclidean spaces~//
{\it Mat. Trudy}, 2021, V.\,24, N\,1. P.\,3–-34  (in Russian).~//
English translation:
Siberian Advances in Mathematics, 2021, V.\,31, N\,3, P.\,155--176, {\bf Zbl.}07656933.
DOI: 10.1134/S1055134421030019

\bibitem{BerNik21n}
{\sl Berestovskii V.~N., Nikonorov Yu.~G.}
Semiregular Gosset polytopes~// {\it Izv. Ross. Akad. Nauk, Ser. Mat.}, 2022, V.\,86, N\,4, P.\,51--84 (in Russian).
English translation:  Izv. Math., 2022, V.\,86, N\,4, P.\,667--698.
DOI: 10.1070/IM9169


\bibitem{BerNik21nn}
{\sl Berestovskii V.~N., Nikonorov Yu.~G.}
On finite homogeneous metric spaces~// {\it Vladikavkaz Mathematical Journal}, 2022, V.\,2, N\,2, P.\,51--61, {\bf MR}4448043, {\bf Zbl.}07598648.
DOI: 10.46698/h7670-4977-9928-z

\bibitem{BerNik22}
{\sl Berestovskii V.~N., Nikonorov Yu.~G.}
On $m$-point homogeneous polytopes in Euclidean spaces~// {\it Filomat}, 2023,  V.\,37, N\,25 (accepted), see also arXiv:2206.13096.



\bibitem{BoFe1987}
{\sl Bonnesen~T., Fenchel~W.}
{\it Theory of Convex Bodies}, BCS Associates, Moscow, ID, 1987. Translated
from the German and edited by L. Boron, C. Christenson and B. Smith., {\bf MR}0920366, {\bf Zbl.}0628.52001.

\bibitem{ConSlo88}
{\sl Conway~J.H., Sloane~N.J.A.} {\it Sphere packings, lattices, and groups}, Grundlehren Math. Wiss., vol. 290, Springer-Verlag, New York, 1988
, {\bf MR}0920369, {\bf Zbl.}0634.52002;
Russian transl., vol. 1,2, Mir, Moscow, 1990.


\bibitem{Cox73}
{\sl Coxeter~H.S.M.} {\it Regular polytopes, 3d ed.} New York: Dover, 1973, {\bf MR}0370327.


\bibitem{Dut2017}
{\sl Dutour~Sikiri\'{c}~M.}
The seven dimensional perfect Delaunay polytopes and Delaunay simplices,
Canadian J. Math., 2017, V.\,69, N\,5, P.\,1143--1168, {\bf MR}3693151, {\bf Zbl.}1381.11054.

\bibitem{DER2007}
{\sl Dutour~Sikiri\'{c}~M., Erdahl~R., Rybnikov~K.},
Perfect Delaunay polytopes in low dimensions, {\it  Integers}, 2007, 7, A39, 49 pp., {\bf MR}2342197  {\bf Zbl.}1194.52018.



\bibitem{Elte12}
{\sl Elte~E.~L.}
{\it The Semiregular Polytopes of the Hyperspaces},
1912, Groningen: University of Groningen,
\url{https://quod.lib.umich.edu/u/umhistmath/ABR2632.0001.001}, 29.03.2023.


\bibitem{FToth64}
{\sl Fejes T{\'o}th L.}
{\it Regular figures.}
International Series of Monographs on Pure and Applied Mathematics. 47. Oxford etc.: Pergamon Press, 1964, {\bf MR}0165423, {\bf Zbl.}0134.15705.


\bibitem{Gos}
{\sl Gosset Th.}
On the regular and semi-regular figures in space of $n$ dimensions~//
{\it Messenger Math.}, 1900, V.\,29. P.\,43--48.


\bibitem{Vinb85}
{\sl Vinberg E.~B.}
{\it Linear Representations of Groups}, Moscow: Nauka, 1985 (in Russian).
English translation: Springer Basel AG: 2010. Reprint of the 1989 Edition by Birkh\"auser-Verlag, Basel, {\bf MR}2761806, {\bf Zbl.}1206.20008.

\end{thebibliography}
\end{document}